\documentclass[10pt, reqno]{amsart}
\usepackage{amsfonts}
\usepackage{amsmath,amssymb}
\usepackage{mathrsfs}
\usepackage{hyperref}
\usepackage{graphicx}
\usepackage{amsthm}
\usepackage{color}
\usepackage{dsfont}
\usepackage{bbm, dsfont}
\usepackage{cite}
\numberwithin{equation}{section}

\allowdisplaybreaks


\newtheorem{theorem}{Theorem}[section]

\newtheorem*{theorem*}{Theorem}

\newtheorem{lemma}[theorem]{Lemma}

\theoremstyle{definition}
\newtheorem{definition}[theorem]{Definition}
\newtheorem{remark}[theorem]{Remark}

\theoremstyle{remark}

\begin{document}

\title[Global asymptotic behavior for gDNLS]{Global asymptotic behavior of solutions to the generalized derivative nonlinear Schr\"odinger equation}

\author[M. Shan]{Minjie Shan}
\email{smj@muc.edu.cn}
\address{College of Science, Minzu University of China, Beijing 100081, P. R. China}

\maketitle

\begin{abstract} This article is concerned with the global asymptotic behavior for the generalized derivative nonlinear Schr\"odinger (gDNLS) equation. When the nonlinear effect is not strong, we show  pointwise-in-time dispersive decay for solutions to the gDNLS  equation with small initial data in $H^{\frac{1}{2}+}(\mathbb{R})$ utilizing crucially  Lorentz-space improvements of the traditional Strichartz inequality. When the nonlinear effect is especially dominant, there exists a sequence of solitary waves  that  are arbitrary small in the energy space, which means the small data scattering is not true. However, there is evidence that it is not possible for the solitons to be localized in $L^{2}(\mathbb{R})$ and small in $H^{1}(\mathbb{R})$. With small and localized data assumption, we obtain global asymptotic behavior for solutions to the gDNLS  equation by using vector field methods combined with the testing by wave packets method.
\end{abstract}

\section{Introduction}              
We study the asymptotic behavior of solutions to the generalized derivative nonlinear Schr\"odinger (gDNLS) equation 
\begin{equation}
	\left\{
	\begin{aligned}
		&i\partial_{t}u +\partial_{x}^2 u +i\partial_{x}\big(|u|^{2\sigma}u\big) = 0, \\
		&u(0,x)=u_0(x),\ \ \ (t, x)\in \mathbb{R}\times\mathbb{R}, \label{gDNLS} \\
	\end{aligned}
	\right.
\end{equation}
where $u(t, x)$ is a complex-valued function and $\sigma>0$.  The gDNLS equation describes the propagation of Alfv\'en waves with small but finite amplitude in the magnetized plasma \cite{MOMT76, Mjol76}.

This equation enjoys the mass conservation law
	\begin{align}
M\big(u(t)\big):=\int |u(t)|^2dx=M\big(u_0\big)   \label{gDNLSmass}
	\end{align}
and energy conservation law
	\begin{align}
E\big(u(t)\big):=\int |u_x|^2-\frac{2\sigma+1}{\sigma+1}|u |^{2\sigma}\text{Im}(u_x\bar{u})+\frac{1}{\sigma+1}|u|^{4\sigma+2} dx=E\big(u_0\big).   \label{gDNLSenergy }
	\end{align}
Moreover, \eqref{gDNLS} admits the following scaling symmetry: If $u$  is a solution, then for any $\lambda>0$, so is
$$u_{\lambda}(t,x):=\lambda^{\frac{1}{2\sigma}}u(\lambda^2 t,\lambda x).$$
Note that
$$\|u\|_{\dot{H}^{s_c}(\mathbb{R})}=\|u_{\lambda}\|_{\dot{H}^{s_c}(\mathbb{R})}$$ 
deduces $s_c=\frac{1}{2}-\frac{1}{2\sigma}$.  We call $s_c=\frac{1}{2}-\frac{1}{2\sigma}$ the scaling-critical Sobolev index as the $\dot{H}^{s_c}$-norm is invariant under the scaling symmetry. 

When $\sigma=1$, \eqref{gDNLS} becomes 
\begin{align}
i\partial_{t}u +\partial_{x}^2 u +i\partial_{x}\big(|u|^{2}u\big) = 0. \label{DNLS}
	\end{align}
This is the standard derivative nonlinear Schr\"odinger equation (DNLS) which is mass-critical. The well-posedness and long-term behavior for the DNLS equation \eqref{DNLS} have been widely studied by many authors \cite{GT91, Hayashi93, Ozawa96, OT98, TF80, TF81}.  Hayashi and Ozawa \cite{HO92, HO94} obtained the local well-posedness for the DNLS equation \eqref{DNLS} in $H^1(\mathbb{R})$.  Using a gauge transformation and the Fourier restriction norm method,  Takaoka \cite{Takaoka99} showed local well-posedness in $H^s(\mathbb{R})$ for $s\geq \frac{1}{2}$. The local well-posedness result in $H^{\frac{1}{2}}(\mathbb{R})$ is sharp in the sense that the data map fails to be $C^3$ for $s < \frac{1}{2}$, see \cite{Takaoka01}. In \cite{BL01}, Biagioni and Linares showed that the data-to-solution mapping even fails to be locally uniformly continuous below $H^{\frac{1}{2}}(\mathbb{R})$ . Mosincat and Yoon \cite{MY20} considered unconditional well-posedness in $H^s$ for $s>\frac{1}{2}$. The local well-posedness in the Fourier-Lebesgue space was studied  by Gr\"unrock in \cite{Grunrock05}. For global well-posedness, based on two gauge transformations performed to remove the derivative in the nonlinear term,  Ozawa \cite{Ozawa96}  proved that \eqref{DNLS}  is globally well-posed in $H^1(\mathbb{R})$ assuming the smallness condition $\|u_0\|_{L^2}<\sqrt{2\pi}$. As the $L^2$ norm of solution to the DNLS equation \eqref{DNLS} is conserved, the smallness condition imposed on the initial data will force the energy to be positive via the sharp Gagliardo-Nirenberg inequality. Afterwards, the I-method \cite{Takaoka01, CKSTT01, CKSTT02, MWX11} was applied to refine the global well-posedness result under the same $L^2$-smallness condition. Long-term behavior and modified scattering were studied in \cite{GHLN13}. Taking use of the momentum conservation, Wu \cite{Wu13, Wu15} obtained the global well-posednss in $H^1(\mathbb{R})$  under a weaker condition $\|u_0\|_{L^2}<2\sqrt{\pi}$. This result shows a striking difference between DNLS and other mass critical equations like focusing generalized KdV and quintic focusing nonlinear Schr\"odinger equation. Under the same initial data condition, Guo and Wu \cite{GW17} proved that  \eqref{DNLS} is globally well-posed in $H^{\frac{1}{2}}(\mathbb{R})$. 

 Another interesting property of DNLS \eqref{DNLS} is that it is completely integrable. Using the inverse scattering method, Jenkins, Liu, Perry and Sulem \cite{JLPS18, JLPS20} proved global wellposedness for general initial conditions in the weighted Sobolev space $H^{2,2}(\mathbb{R})$. It is noteworthy that Bahouri and Perelman \cite{BP22} recently obtained global well-posedness in $H^{\frac{1}{2}}(\mathbb{R})$ without smallness assumption on the $L^2$-norm, relying on profile decomposition, and also crucially on complete integrability.  Low regularity conservation laws  for DNLS in Besov spaces with the full subcritical regularity index  was obtained by Klaus and Schippa \cite{KS22} using the perturbation determinant method introduced by Killip, Visan and Zhang \cite{KVZ18}. Tang and Xu \cite{TX20} showed the corresponding microscopic conservation laws for the Schwartz solutions of DNLS with small mass.  Killip, Ntekoume and Visan \cite{KNV23} obtained global well-posedness in $H^{s}(\mathbb{R})$ for $\frac{1}{6}\leq s< \frac{1}{2}$ with $\|u_0\|_{L^2}<2\sqrt{\pi}$. Taking advantage of the commuting flows method, local smoothing effect and tightness estimates, Harrop-Griffiths, Killip, Ntekoume and Visan \cite{HGKNV22}  further proved that  DNLS  \eqref{DNLS}  is globally well-posed in  $L^{2}(\mathbb{R})$, and more generally in  $H^{s}(\mathbb{R})$ for $0\leq s\leq \frac{1}{2}$.  The ill-posedness in $H^{s}(\mathbb{R})$ for $s<0$ is a direct reslut from the self similar solutions constructed in \cite{FGO20}. 
 
Now we turn to  well-posedness for the  Cauchy problem of the gDNLS equation \eqref{gDNLS}. Hao \cite{Hao07} proved local well-posedness in $H^{\frac{1}{2}}(\mathbb{R})$ for $\sigma \geq \frac{5}{2}$ by using  gauge transformation and  Littlewood-Paley decomposition.  Ambrose and Simpson \cite{AS15} showed local well-posedness in $H^{1}(\mathbb{R})$ for $\sigma \geq 1$. This local well-posedness result was improved to $H^{\frac{1}{2}}(\mathbb{R})$ by Santos \cite{Santos15}.
Hayashi and Ozawa \cite{HO16} obtained local well-posedness in  $H^{2}(\mathbb{R})$ for $\sigma\geq \frac{1}{2}$ and global well-posedness in  $H^{1}(\mathbb{R})$ for $\sigma\geq1$. For $0<\sigma<\frac{1}{2}$, Linares, Ponce and Santos \cite{LPS19a, LPS19b} showed the local well-posedness for a class of data of arbitrary size in some weighted Sobolev spaces. There are only few results with regard to global well-posedness. Hayashi and Ozawa \cite{HO16} showed the global existence without uniqueness in  $H^{1}(\mathbb{R})$ for $0<\sigma<1$.  Fukaya, Hayashi and Inui \cite{FHI17} obtained a sufficient condition of initial data for global well-posedness  in  $H^{1}(\mathbb{R})$ for $\sigma>1$.  When $\sigma\geq2$, Bai, Wu and Xue \cite{BWX20} showed that \eqref{gDNLS} is globally well-posed and scatters in $H^{s}(\mathbb{R})$ ( $\frac{1}{2} \leq s \leq 1$)  with small initial data.  Moreover, they proved that there exists a sequence of solitary waves which are arbitrary small in $H^{1}(\mathbb{R})$ when $0<\sigma<2$, which is against the small data scattering statement. Their result suggested the exponent $\sigma \geq2$ is optimal for small data scattering. See Lemma  \ref{BWX20Result} for a more detailed description. The existence of wave operator for \eqref{gDNLS} with $\sigma\in \mathbb{R}$ and $\sigma\geq 3$ was proved by Bai and Shen \cite{BS23}. Regarding  the semilinear Schr\"odinger equation, there are two important exponents named short range exponent and the Strauss exponent. The short range exponent is closely related to global well-posedness, and the Strauss exponent is usually used to determine whether scattering occurs. Specifically, if the nonlinear power is larger than the short range exponent $3$ ($1 +\frac{2}{n}$ for general dimensions $n$), one has small data global well-posedness and the existence of wave operator (see for examples \cite{CW92, GOV94, Nakanishi01}); if the nonlinear power is larger than the Strauss exponent $\frac{\sqrt{17}+3}{2}\approx 3.56$ ($\frac{\sqrt{n^2+12n+4}+n+2}{2}$ for general dimensions $n$), one has small data scattering (see for examples \cite{Strauss81}). However, as far as we know there is no such general result for non-semilinear Schr\"odinger equation. Comparing with the semilinear Schr\"odinger equations, from the result of Bai, Wu and Xue \cite{BWX20}, we see that the optimal scattering exponent for the nonlinear Schr\"odinger equation with derivatives is larger than the short range exponent and the Strauss exponent. This once again demonstrates the huge difference between the derivative nonlinear Schr\"odinger equation  and the nonlinear Schr\"odinger equation.

For the gDNLS equation with more general nonlinear term 
\begin{align}
i\partial_{t}u +\Delta u  = P(u, \bar{u}, \partial_x u, \partial_x \bar{u}), \label{gGDNLS}
	\end{align}
Kenig, Ponce and Vega \cite{KPV93} showed that  \eqref{gGDNLS} is locally well-posed in $H^{\frac{7}{2}}(\mathbb{R})$ with small initial data, when  $P$ is a polynomial of the form $P(z)=\sum_{d\leq |\alpha| \leq l}C_{\alpha}z^{\alpha}$ and $l, d$ are integers with $l\geq d \geq 3$. Gr\"unrock \cite{Grunrock00} proved local  well-posedness for \eqref{gGDNLS} in $H^{s}(\mathbb{R})$ with $s>\frac{1}{2}-\frac{1}{d-1}$ when $P=\partial_x(\bar{u}^d)$ and $s>\frac{3}{2}-\frac{1}{d-1}$ when $P=(\partial_x\bar{u})^d$ respectively. Hirayama \cite{ Hirayama15} showed the small data global well-posedness  for \eqref{gGDNLS}  in $H^{s}(\mathbb{R})$ with $s>\frac{1}{2}-\frac{1}{d-1}$ when $P=\partial_x(\bar{u}^d)$. By using a decomposition technique instead of using a gauge transformation to deal with the loss on derivative in nonlinearity,  Pornnopparath \cite{Pornn18} systematically studied the local and global solutions of \eqref{gGDNLS} in low-regularity Sobolev spaces. For higher dimension and more related theories, see  \cite{Bej06, Bej08, BT08,  Wang11} and the references therein.

Once the global well-posedness was obtained, the next natural questions to ask are: How is the global asymptotic behavior in time of the solution?  Whether the solution to the nonlinear equation exhibits the same dispersive decay as the solution to the corresponding linear equation?

For the nonlinear evolution equation \eqref{gDNLS}, scattering  means that there exist unique $u_0^{\pm}\in H^s(\mathbb{R})$ for some $s\in\mathbb{R}$ such that
	\begin{align}
		\lim_{t\to \pm \infty}\left\|u(t)- e^{it\partial^2_x}u_0^{\pm}\right\|_{H^{s}_{x}}=0, \nonumber
	\end{align}
we intuitively expect that the long-time asymptotic development of solution $u(t)$ to  \eqref{gDNLS} behaves like a solution to the linear Schr\"odinger equation which has the dispersive decay
	\begin{align}
		  \| e^{it\partial^2_x}u_0 \|_{L^{p}_{x}}\lesssim_p |t|^{-(\frac{1}{2}-\frac{1}{p})} \|u_0 \|_{L^{p'}_{x}}, \label{SchStri}
	\end{align}
where $2\leq p \leq \infty$ and $\frac{1}{p}+\frac{1}{p'}=1$. \eqref{SchStri} is derived by interpolating between the mass identity  and the dispersive estimate 
\begin{align}
		  \| e^{it\partial^2_x}u_0 \|_{L^{\infty}_{x}}\lesssim  |t|^{-\frac{1}{2}} \|u_0 \|_{L^{1}_{x}}. \label{SchStri11}
	\end{align}

Dispersive decay property for nonlinear dispersive equations has been widely studied in the past forty years.  Utilizing the Morawetz estimate, Lin and Strauss \cite{LinStr78}  obtained the decay estimate of $L^{\infty}$ norm which was further used to show the existence of the scattering operator for the 3D nonlinear Schr\"odinger equation (NLS). It is also possible to apply vector filed methods and commutator type estimates to get decay estimates, see for example, \cite{Klai85}. But it may require the initial datum lives in some weighted $L^2$ space. In early stage,  quantitative decay estimates for solutions to nonlinear evolution equations were achieved under strong regularity and decay hypotheses; see, for example,  \cite{Klai85, KlaiPon83,  MorStr72, Shatah82, Seg83, HayTsu86} as well as the references therein.

Further progress has considerably weakened the regularity and/or decay conditions needed to construct local solutions. For the energy-critical NLS, Fan and Zhao \cite{FZ21},  Guo, Huang and Song \cite{GHS23} respectively obtained $L^{\infty}_x$ dispersive decay when $u_0\in H^3(\mathbb{R}^3)$. For the cubic NLS with initial data $u_0\in H^1(\mathbb{R}^3)$, Fan, Staffilani and Zhao \cite{FSZ24} got point-wise decay estimates. What's more, they showed that the  randomization of the initial data can be used to replace the $L^1$-data assumption. See \cite{FZ23} for the necessity of the $L^1$-data assumption. For the mass-critical NLS  with initial data $u_0\in L^2(\mathbb{R}^d)\cap  L^{r'}(\mathbb{R}^d)$ ($d=1,2,3$), Fan, Killip, Visan  and Zhao \cite{FKVZ24} proved dispersive decay by using Lorentz-space improvements of the traditional Strichartz inequality (for $d=1,2$) and a subtle decomposition of the nonlinearity (for $d=3$). Their work was optimal in the sense that no auxiliary assumptions were made besides finiteness of the critical norm.  For the energy-critical NLS,  Kowalski \cite{Kowal24} recently showed dispersive decay with initial data $u_0$ in the scaling-critical Sobolev space $ \dot{H}^1$ when $d=3,4$ .  For work on dispersive estimates for various wave equations, we direct the  reader to \cite{Klai80, Klai85, KlaiPon83, Kowal25, Loo21, Loo22,  MorStr72,  Seg83}. In \cite{Shan24}, we showed the dispersive decay for solutions to the mass-critical gKdV equation with  $u_0\in H^{\frac{1}{4}}\cap L^{1}$ . The fact that the nonlinear term in the  KdV model  contains derivatives   distinguishes our results from prior works in this direction. We need smooth estimates to handle nonlinear terms with derivatives.

In this paper, we prove analogues of \eqref{SchStri11} for the  generalized derivative nonlinear Schr\"odinger equation. We are now ready to state the main results.

When the  nonlinear effects are weak, dispersive decay is proved for solutions to \eqref{gDNLS} with initial data in $H^{1-}$. 
\begin{theorem} \label{MainResult1}
Assume that $\sigma> 2$ and $u(t)$ is the global  solution to the  gDNLS  equation  \eqref{gDNLS} with initial data $u_0\in H^{\frac{1}{2}+\frac{\sigma-2}{2\sigma-3}} \cap L^{1}$ and $\|u_0\|_{H^{\frac{1}{2}+\frac{\sigma-2}{2\sigma-3}}}\ll1$, then there exists a constant $C=C\big(\|u_0\|_{ H^{\frac{1}{2}+\frac{\sigma-2}{2\sigma-3}}}\big)$, such that 
	\begin{align}
\|u(t,x)\|_{L^{\infty}_{x}}\leq C |t|^{-\frac{1}{2}} \|u_0\|_{L^{1}_{x}}.   \label{MainResult1a}
	\end{align}
If $\sigma= 2$ and $u(t)$ is the global  solution to the  gDNLS  equation  \eqref{gDNLS} with initial data $u_0\in H^{\frac{1}{2}+} \cap L^{1}$ and $\|u_0\|_{H^{\frac{1}{2}+}}\ll1$, then \eqref{MainResult1a} holds true.
\end{theorem}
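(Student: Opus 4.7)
The plan is to combine the sharp linear $L^1\!\to\!L^\infty$ dispersive estimate with a bootstrap driven by Lorentz-refined Strichartz bounds and local (Kato) smoothing estimates, adapting the Fan--Killip--Visan--Zhao strategy \cite{FKVZ24} and the author's earlier gKdV analysis \cite{Shan24} to the derivative-Schr\"odinger setting. As a prerequisite, I would invoke the small-data global theory of Bai--Wu--Xue (Lemma~\ref{BWX20Result}) to obtain global Strichartz control of $u$, then use persistence of regularity to upgrade that control to the $H^{\frac12+\frac{\sigma-2}{2\sigma-3}}$ level.

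Starting from Duhamel's formula
\begin{equation*}
u(t)=e^{it\partial_x^2}u_0+i\int_0^t e^{i(t-s)\partial_x^2}\partial_x\!\left(|u|^{2\sigma}u\right)\!(s)\,ds,
\end{equation*}
the linear piece already obeys $\|e^{it\partial_x^2}u_0\|_{L_x^\infty}\le|t|^{-1/2}\|u_0\|_{L_x^1}$, so everything reduces to obtaining the same decay for the Duhamel integral, which I split at $s=t/2$. On the \emph{far} interval $s\in[0,t/2]$ the sharp dispersive estimate delivers the factor $|t-s|^{-1/2}\lesssim|t|^{-1/2}$ together with $\|\partial_x(|u|^{2\sigma}u)(s)\|_{L_x^1}$; expanding the derivative by the product rule and applying a spacetime H\"older inequality that distributes the $2\sigma$ copies of $u$ into a maximal-type Strichartz norm (roughly $L_x^p L_t^\infty$) while placing $\partial_x u$ into the smoothing norm $L_x^\infty L_t^2$ (controlled via $\|\partial_x^{1/2}e^{it\partial_x^2}f\|_{L_x^\infty L_t^2}\lesssim\|f\|_{L_x^2}$ and its inhomogeneous analogue) produces a bound by small powers of the norms supplied in the first step.

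The \emph{near} interval $s\in[t/2,t]$ is more delicate because the kernel $(t-s)^{-1/2}$ lies only in the weak space $L_s^{2,\infty}$, so an $L^1_x$ bound on the nonlinearity is not integrable. Here I would follow \cite{FKVZ24} and invoke the Lorentz-improved Strichartz estimate $\|e^{it\partial_x^2}f\|_{L_t^q L_x^{r,2}}\lesssim\|f\|_{L_x^2}$ (valid on admissible pairs with $q\neq2$) together with its inhomogeneous version; H\"older in Lorentz spaces then pairs the weak-$L^2$ singularity of the kernel against the Strichartz norm of the nonlinearity, while the derivative loss is again absorbed through dual local smoothing. Setting $M(T):=\sup_{0<t\le T}t^{1/2}\|u(t)\|_{L_x^\infty}$, the pieces combine to give $M(T)\le C\|u_0\|_{L_x^1}+C\varepsilon^{2\sigma}M(T)$ with $\varepsilon\sim\|u_0\|_{H^{1/2+(\sigma-2)/(2\sigma-3)}}$, and a continuity argument closes the bootstrap.

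The precise regularity index $\tfrac12+\tfrac{\sigma-2}{2\sigma-3}$ is forced by the Sobolev embeddings that match the H\"older exponents chosen when splitting $|u|^{2\sigma}\partial_x u$ among the Strichartz, maximal-function, and smoothing norms; the endpoint $\sigma=2$ saturates one of these embeddings, which accounts for the extra $\varepsilon$ of regularity in that case. The main obstacle throughout is the derivative appearing in the nonlinearity: unlike semilinear NLS, Strichartz estimates alone cannot accommodate it, and the entire argument hinges on orchestrating local smoothing, Lorentz-refined Strichartz, and maximal-function bounds so that the lost derivative is paid for without exhausting the small $H^s$ data, especially on the near-diagonal interval where the free kernel is most singular.
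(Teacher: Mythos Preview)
Your overall strategy---Duhamel, split at $t/2$, linear dispersive bound, and a bootstrap on $M(T)=\sup_{0<t\le T}t^{1/2}\|u(t)\|_{L^\infty_x}$ with Lorentz refinements to handle the borderline time singularity---is the paper's approach. Two points, however, need correction.

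First, the Lorentz exponent goes on the \emph{time} variable: the relevant estimate is $\|e^{it\partial_x^2}f\|_{L^{q,2}_t L^r_x}\lesssim\|f\|_{L^2_x}$ (Lemma~\ref{Lor-Stri}), not $L^q_t L^{r,2}_x$. The entire point of the Lorentz improvement is to pair, via H\"older in Lorentz spaces, the kernel singularity $(t-s)^{-1/2}\in L^{2,\infty}_s$ (or $s^{-1/2}\in L^{2,\infty}_s$) against an $L^{q,2}_s$ norm of the remaining factors.

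Second, and more substantively, your proposed treatment of the far interval $[0,t/2]$ does not close. Placing all $2\sigma$ copies of $u$ in maximal norms $L^p_x L^\infty_t$ and $\partial_x u$ in the smoothing norm $L^\infty_x L^2_t$ yields only $|u|^{2\sigma}\partial_x u\in L^2_t L^1_x$, not $L^1_t L^1_x$; the integral $\int_0^{t/2}\|\,\cdot\,\|_{L^1_x}\,ds$ then diverges and the promised bound ``by small powers of the norms'' is never achieved (and even if it were, it would produce a term without the factor $\|u_0\|_{L^1_x}$, contradicting the bootstrap inequality you state at the end). The paper resolves this by treating the far interval \emph{exactly} like the near one: extract one factor $\|u(s)\|_{L^\infty_x}\le s^{-1/2}M(T)$, which simultaneously produces the bootstrap quantity and the $L^{2,\infty}_s$ weight $s^{-1/2}$, then run the same Lorentz--H\"older argument. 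Thus the Lorentz-improved Strichartz bound (Lemma~\ref{LorImpr}) and the interpolated smoothing norm $\|\partial_x u\|_{L^{4(2+\varepsilon)/\varepsilon}_x L^{2+\varepsilon}_t}$ (Lemma~\ref{InterEsti}, which replaces the endpoint $L^\infty_x L^2_t$ you invoke) are used on \emph{both} halves of the Duhamel integral, not just the near one.
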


However, when the  nonlinear effects are strong, from \cite{BWX20} (see Lemma \ref{BWX20Result}), we know that there exist a class of solitary wave solutions for \eqref{gDNLS}. Therefore, the nonlinear solution is impossible to decay like the linear one globally in time. In fact, the \emph{soliton resolution conjecture} asserts, roughly speaking, that any reasonable solution to a dispersive  equation eventually resolves into a superposition of a dispersive component plus a number of ``solitons". 

If solitons exist, one can distinguish two stages to consider dispersive decay bounds for a nonlinear equation. The first stage is to add some conditions on the initial data so that the solitons case is excluded. Then, the nonlinear solution may decay like the linear
solution globally in time. The second stage is to study the time scale up to which the solution will satisfy linear dispersive decay bounds.  In other words, we expect to get the optimal quartic time scale that marks the earliest possible emergence of solitons.  Some work on this topic can be found in \cite{IfKoTa19, IfSa23, IfTa19} for KdV, intermediate long wave, and Benjamin-Ono equations. 

In this article we aim to describe the first of the two stages above.

\begin{theorem} \label{MainResult2}(a)(Dispersive decay) Let $0<\epsilon \ll 1$. Assume that $1\leq\sigma<2$ and $u(t)$ is the global  solution to \eqref{gDNLS} with initial data $u_0$ satisfying 
	\begin{align}
 \|xu_0\|_{H^1}+ \|u_0\|_{L^2}\leq \epsilon . \nonumber
	\end{align}
Then we have the following bound on $Lu$
	\begin{align}
\|Lu\|_{H^{1}_{x}}\lesssim  \epsilon \langle t \rangle^{C \epsilon^{\frac{3}{2}}} , \label{MainResult2a}
	\end{align}
and the dispersive bounds
	\begin{align}
\|u\|_{L^{\infty}_{x}} \lesssim  \epsilon \langle t \rangle^{-\frac{1}{2}} \hspace{10mm} \text{and} \hspace{10mm}  \|u_x\|_{L^{\infty}_{x}}\lesssim  \sqrt{\epsilon}\langle t \rangle^{-\frac{1}{2}} \label{MainResult2b}
	\end{align}
for all $t\in \mathbb{R}$, where $L:=x+2it\partial_x$ and $\langle t \rangle=(1+t^2)^{-\frac{1}{2}}$.

(b)(Asymptotic behavior)  Assume that $u(t)$ is the global  solution to the DNLS ( with $\sigma=1$ in \eqref{gDNLS}) as in part (a). Then there exists a function $W\in H^{1-C\epsilon^{\frac{3}{2}}}(\mathbb{R})$ such that
	\begin{align}
u(t,x)&=\frac{1}{\sqrt{t}}e^{i\frac{x^2}{4t}}W(\frac{x}{t})e^{-i\frac{x^2}{2t}\log t|W(\frac{x}{t})|^2}+err_x, \label{MainResult2c} \\
\widehat{u}(t,\xi)&= e^{-it\xi^2}W(2\xi)e^{-i\xi\log t|W(2\xi)|^2}+err_{\xi}, \label{MainResult2d}
	\end{align}
where
	\begin{align}
err_x & \in\epsilon\left(O_{L^{\infty}}(\langle t \rangle^{-\frac{3}{4}+C\epsilon^{\frac{3}{2}}})\cap O_{L^{2}_x}(\langle t \rangle^{-1+C\epsilon^{\frac{3}{2}}})\right), \notag  \\
err_{\xi}& \in\epsilon\left(O_{L^{\infty}}(\langle t \rangle^{-\frac{1}{4}+C\epsilon^{\frac{3}{2}}})\cap O_{L^{2}_{\xi}}(\langle t \rangle^{-\frac{1}{2}+C\epsilon^{\frac{3}{2}}})\right). \nonumber 
	\end{align}
If $u(t)$ is the global  solution to  the gDNLS \eqref{gDNLS} with $1<\sigma<2$ as in part (a). Then there exists a function $\widetilde{W}\in H^{1}(\mathbb{R})$ such that
	\begin{align}
u(t,x)&=\frac{1}{\sqrt{t}}e^{i\frac{x^2}{4t}}\widetilde{W}(\frac{x}{t})e^{-i\frac{x^2}{2} t^{-\sigma}|\widetilde{W}(\frac{x}{t})|^{2\sigma}}+err_x, \label{MainResult2e} \\
\widehat{u}(t,\xi)&= e^{-it\xi^2}\widetilde{W}(2\xi)e^{-i\xi t^{1-\sigma}|\widetilde{W}(2\xi)|^{2\sigma}}+err_{\xi}, \label{MainResult2f}
	\end{align}
where
	\begin{align}
err_x & \in\epsilon\left(O_{L^{\infty}}(\langle t \rangle^{-\frac{3}{4}})\cap O_{L^{2}_x}(\langle t \rangle^{-1})\right), \notag  \\
err_{\xi}& \in\epsilon\left(O_{L^{\infty}}(\langle t \rangle^{-\frac{1}{4}})\cap O_{L^{2}_{\xi}}(\langle t \rangle^{-\frac{1}{2}})\right). \nonumber 
	\end{align}
\end{theorem}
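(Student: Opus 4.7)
The plan is to combine a bootstrap argument built on the Galilean vector field $L = x + 2it\partial_x$ (yielding Part (a)) with the testing‑by‑wave‑packets method of Ifrim--Tataru (yielding Part (b), using Part (a) as input). The foundation is the conjugation identity
\[
Lu = 2it\, e^{ix^2/(4t)}\,\partial_x\bigl(e^{-ix^2/(4t)}u\bigr),
\]
which with 1D Gagliardo--Nirenberg produces the Klainerman--Sobolev bound $\|u\|_{L^\infty}^2 \lesssim \langle t\rangle^{-1}\|u\|_{L^2}\|Lu\|_{L^2}$. Since $[\partial_x,L]=1$, one also has $L(u_x) = \partial_x Lu - u$, and the same argument gives a pointwise bound on $u_x$. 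Together with mass conservation, these already yield \eqref{MainResult2b} as soon as the bootstrap on $\|Lu\|_{H^1}$ is in place.

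To close the bootstrap $\|Lu\|_{H^1} \leq 2\epsilon\langle t\rangle^{C\epsilon^{3/2}}$, apply $L$ to \eqref{gDNLS}: since $L$ commutes with $i\partial_t + \partial_x^2$ and $[L,\partial_x]=-1$, setting $w := Lu$ gives
\[
(i\partial_t + \partial_x^2)w = -i\partial_x\bigl[L(|u|^{2\sigma}u)\bigr] + i|u|^{2\sigma}u.
\]
Using the general rule $L(f(u,\bar u)) = f_u Lu - f_{\bar u}\overline{Lu}$, valid whenever $uf_u - \bar u f_{\bar u}=f$ (as is the case here), one derives the clean algebraic identity
\[
L(|u|^{2\sigma}u) = (\sigma+1)|u|^{2\sigma} w - \sigma\,|u|^{2(\sigma-1)}u^2\,\overline{w},
\]
which makes the $w$-equation linear in $(w,\bar w)$ with coefficients of pointwise size $|u|^{2\sigma}$. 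Pairing against $\bar w$ and $-\partial_x^2\bar w$ and taking real parts, the leading derivative of $w$ cancels, and the growth of $\|w\|_{H^1}^2$ reduces to integrals of the shape $\int |u|^{2\sigma-1}|u_x||w|^2$ plus lower order. Feeding in the pointwise bounds gives
\[
\tfrac{d}{dt}\|w\|_{H^1}^2 \lesssim \epsilon^{2\sigma-1/2}\,\langle t\rangle^{-\sigma}\,\|w\|_{H^1}^2 + \textrm{(lower order)},
\]
which is borderline integrable when $\sigma=1$ (producing the $\langle t\rangle^{C\epsilon^{3/2}}$ loss) and strictly integrable when $1<\sigma<2$ (no loss). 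Gronwall closes the bootstrap, giving \eqref{MainResult2a}.

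For Part (b), introduce a Schr\"odinger wave packet $\Psi_v(t,x) := t^{-1/2}\chi\!\bigl(\tfrac{x-2vt}{\sqrt t}\bigr)e^{ix^2/(4t)}$ localized at $x=2vt$ with spatial width $\sqrt t$, and test $\gamma(t,v) := \langle u(t),\Psi_v\rangle_{L^2}$. Using the vector‑field bounds of Part (a) and stationary phase, $\gamma(t,v)$ approximates $t^{1/2}e^{-iv^2 t}\widehat u(t,v)$ to within the error sizes asserted in \eqref{MainResult2c}--\eqref{MainResult2f}. Substituting the ansatz $u(t,x)\approx t^{-1/2}e^{ix^2/(4t)}\gamma(t,x/(2t))$ into \eqref{gDNLS} and evaluating $\partial_x(|u|^{2\sigma}u)$ to leading order (the spatial derivative pulls down an extra factor $ix/(2t)=iv$) yields the transport ODE
\[
\partial_t\gamma(t,v) = -2iv\,t^{-\sigma}\,|\gamma(t,v)|^{2\sigma}\gamma(t,v) + O(t^{-1-\delta}).
\]
The modulus $|\gamma(t,v)|$ is therefore approximately conserved in $t$, producing the asymptotic profile ($W$ or $\widetilde W$), and the phase is the integral of $v\,t^{-\sigma}|W|^{2\sigma}$: for $\sigma=1$ this gives the logarithmic correction $v\log t\,|W|^2$; for $1<\sigma<2$ the purely algebraic correction $\tfrac{v}{1-\sigma}t^{1-\sigma}|\widetilde W|^{2\sigma}$. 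Transferring back to position and frequency side via the profile representation yields \eqref{MainResult2c}--\eqref{MainResult2f}.

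The main obstacle is the critical case $\sigma=1$, where long‑range effects appear simultaneously at the level of the $H^1$ energy estimate for $Lu$ (the borderline $\langle t\rangle^{-1}$ weight on the right‑hand side forces the log‑loss $\epsilon^{3/2}$) and in the ODE for $\gamma$ (forcing a logarithmic phase modulation rather than classical scattering). The real‑part cancellation that kills the highest derivative in the quasilinear $w$-equation—without which the $H^1$ estimate loses derivatives—together with sufficiently sharp quantitative control on the wave‑packet residuals so as to preserve the $\langle t\rangle^{-1}$ dispersive rate, are the delicate technical steps that make the scheme close.
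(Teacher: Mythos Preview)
Your overall architecture---vector field $L$, $H^1$ energy estimate on $Lu$, testing by wave packets---matches the paper's. But there is a genuine gap in your argument for Part~(a) at the endpoint $\sigma=1$.

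You write that Klainerman--Sobolev together with the bootstrap $\|Lu\|_{H^1}\le 2\epsilon\langle t\rangle^{C\epsilon^{3/2}}$ ``already yield \eqref{MainResult2b}''. They do not: \eqref{MainResult2b} asks for the \emph{sharp} rate $\langle t\rangle^{-1/2}$, whereas
\[
\|u\|_{L^\infty}^2 \;\lesssim\; \langle t\rangle^{-1}\|u\|_{L^2}\|Lu\|_{L^2}\;\lesssim\;\epsilon^2\langle t\rangle^{-1+C\epsilon^{3/2}}
\]
inherits the slow growth of $\|Lu\|_{L^2}$. Feeding this degraded pointwise bound back into your energy inequality turns the coefficient $\epsilon^{3/2}\langle t\rangle^{-1}$ into $\epsilon^{3/2}\langle t\rangle^{-1+C'\epsilon^{3/2}}$; Gronwall then produces stretched-exponential growth $\exp\!\bigl(c\langle t\rangle^{C'\epsilon^{3/2}}\bigr)$ for $\|Lu\|_{H^1}$ rather than the polynomial $\langle t\rangle^{C\epsilon^{3/2}}$, and the bootstrap does not close. (For $1<\sigma<2$ your scheme is fine, since $\langle t\rangle^{-\sigma}$ is integrable and $\|Lu\|_{H^1}$ stays uniformly bounded.)

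The paper's fix is to run the bootstrap on the $L^\infty$ bounds \eqref{MainResult2b} themselves and to close them \emph{already in Part~(a)} through the wave-packet profile $\gamma$. The ODE $i\dot\gamma=\tfrac{v}{2t}|\gamma|^{2}\gamma-R$ conserves $|\gamma|$ up to the remainder, and $\|R\|_{L^\infty}$ is integrable in $t$ because the growing factor $\|Lu\|_{L^2}$ always appears multiplied by at least an extra $t^{-1/4}$; hence $\|\gamma(t)\|_{L^\infty}\lesssim\epsilon$ uniformly. The physical-space comparison bound
\[
\bigl\|u(t,vt)-t^{-1/2}e^{i\phi}\gamma(t,v)\bigr\|_{L^\infty}\;\lesssim\;t^{-3/4}\|Lu\|_{L^2}\;\lesssim\;\epsilon\,t^{-3/4+C\epsilon^{3/2}}
\]
then recovers the sharp $t^{-1/2}$ decay for $u$, the $t^{-1/4}$ cushion absorbing the log-loss; the same argument with $v\gamma$ and $\|Lu_x\|_{L^2}$ closes the bound on $u_x$. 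In short, the wave-packet machinery is not merely a Part~(b) tool for the asymptotics; at $\sigma=1$ it is already indispensable to obtain \eqref{MainResult2b}. Your Part~(b) outline is otherwise in line with the paper.
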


	\begin{remark}
When $1\leq\sigma<2$, 
$$u_{\omega, c}(t)=e^{i\omega t}\phi_{\omega, c}(x-ct)$$
 are the solitons to \eqref{gDNLS}, where the parameters $c^2<4\omega$, and 
$$\phi_{\omega, c}(x)=\varphi_{\omega, c}(x)\exp\left\{\frac{c}{2}ix-\frac{i}{2\sigma+2}\int_{-\infty}^x\phi^{2\sigma}_{\omega, c}(y)dy\right\}$$
with
$$\varphi_{\omega, c}(x)=\left\{\frac{(\sigma+1)(4\omega-c^2)}{2\sqrt{\omega}\cosh(\sigma\sqrt{4\omega-c^2}x)-c}\right\}^{\frac{1}{2\sigma}}.$$
From \cite{BWX20}, we have 
\begin{align}
& \hspace{15mm}  \|\partial_x \phi_{\omega, c}\|^2_{L_x^2}=\omega\|\phi_{\omega, c}\|^2_{L_x^2},  \hspace{9mm} \|x \phi_{\omega, c}\|^2_{L_x^2}\|\partial_x \phi_{\omega, c}\|^2_{L_x^2} \geq 1,\notag \\
&\|\phi_{\omega, c}(t)\|^2_{L_x^2}=\|\varphi_{\omega, c}\|^2_{L_x^2}=\frac{2}{\sigma}\left(\frac{\sigma+1}{2\sqrt{\omega}}\right)^{\frac{1}{\sigma}}(4\omega-c^2)^{\frac{1}{\sigma}-\frac{1}{2}}\int_0^{\infty} \left(\frac{1}{\cosh x-\frac{c}{2\sqrt{\omega}}}\right)^{\frac{1}{\sigma}}dx, \nonumber
	\end{align}
 where $\|\phi_{\omega, c}(t)\|_{L_x^2}$ can be small by taking $c\to -2\sqrt{\omega}$. This shows that it is not possible for the solitons to be localized in $L^2$ and small in $H^1$.
	\end{remark}

\begin{remark} An interesting question is what happens for $\frac{1}{2}\leq\sigma<1$.   $\sigma=1$ is the optimal case
that our method can handle. Intuitively speaking, the optimal decay of the potential is
$$\||u|^{2\sigma}\|_{L_x^{\infty}}\sim \langle t \rangle^{-\sigma}.$$ 
Then, the energy method tells us the growth rate of  $\|Lu\|_{L_x^{2}}$ and $\|Lu_x\|_{L_x^{2}}$ is $e^{\int_0^t \langle s \rangle^{-\sigma}ds}$. It is easy to see that $\|Lu\|_{L_x^{2}}$ and $\|Lu_x\|_{L_x^{2}}$ are  bounded uniformly in time if $\sigma>1$,  and these two quantities increase at a polynomial rate if $\sigma=1$. Hence, for $\sigma\geq 1$, the difference between the solution $u$ to  \eqref{gDNLS} and the asymptotic profile $\gamma$ is controllable, see \eqref{DifferenceBounds2a}-\eqref{DifferenceBounds3b}. But, if $\sigma<1$, the energy bounds for $Lu$ and $ Lu_x $ grow  at an exponential rate. Our method will be of invalidity.
	\end{remark}

\begin{remark}
\eqref{MainResult2b} and \eqref{MainResult2e} had been obtained by Hayashi and Naumkin \cite{HN97} for the DNLS with initial data $u_0\in H^{\frac{3}{2}+, 0} \cap H^{1,\frac{1}{2}+}$ and $\|u_0\|_{H^{\frac{3}{2}+, 0}}+\|u_0\|_{H^{1,\frac{1}{2}+}}\leq \epsilon$, where $H^{m, s}:=\{f\in \mathcal{S'}; \|f\|_{H^{m, s}}=\|(1+x^2)^{s/2}(1-\partial_x^2)^{m/2}f|_{L^{2}}<\infty\}$. From the identity
$$xe^{it\partial_x^2}u_0=e^{it\partial_x^2}(xu_0)-2ite^{it\partial_x^2}(\partial_x u_0),$$
we see that the regularity of solutions to the linear Schr\"odinger equation is equivalent to the decay rate of the solutions.
Hence, Theorem \ref{MainResult2} can be regarded as an improvement of the result gave in \cite{HN97}.
	\end{remark}

\begin{remark}
Very recently, Byars \cite{Bya24} got the dispersive decay estimate \eqref{MainResult2b} for the DNLS under the assumption  $\|xu_0\|_{H^{1}}+\|u_0\|_{H^{5}}\leq \epsilon$. Compared to her result, our result presented in Theorem \ref{MainResult2}  greatly lowers the regularity.
	\end{remark}

Let us turn now to a brief overview of our methods. For the case $\sigma>2$, we use global a prior bounds showed in Lemma \ref{BWX20Result} and the global Lorentz-Strichartz estimates to get the dispersive decay. What we focus on is applying local smoothing effects to eliminate as many derivatives of the nonlinear term  as possible. For the case $1\leq\sigma<2$, we wold like to obtain the dispersive decay from Klainerman-Sobolev type inequalities provided that energy estimates for $u$ and $Lu$ are bounded uniformly in time. This is the main idea of the vector field method. However, in the setting we consider $\|Lu\|_{L^2}$ slowly increases over time, which prevents such a direct argument. This difficulty can be overcome by making use of the testing by wave packets method introduced by Ifrim and Tataru \cite{IfTa24}. Via constructing an asymptotic equation, we acquire more information for the global dynamics of the solution. On the one hand, the asymptotic equation is an ODE of which the solution can be easily obtained. On the other hand, the wave packets method better balances the scales of Fourier space localization and physical space localization, in such a way that linear and nonlinear matching errors become comparable. The asymptotic equation we find is indeed a good approximation of the original nonlinear equation.

\textbf{Notation.} Given $A, B \geq 0$,  $A\lesssim B$ means that $A\leq C \cdot B$ for an absolute constant $C>0$.  $A\gg B$ means that  $A>C \cdot B$ for a very large positive constant $C$.  We write $c+\equiv c+\epsilon$ and $ c-\equiv c-\epsilon$ for some  $0<\epsilon\ll 1$. Given a function $u$, we denote $\mathscr{F} u $ or $\widehat{u}$ its Fourier transform and denote $\mathscr{F}^{-1} u $  its Fourier inverse transform.  For $1\leq p,q\leq\infty$, define 
$$\|f\|_{L^p_tL^q_{x}}=\left(\int_{\mathbb{R}}\Big(\int_{\mathbb{R}}|f(t,x)|^qdx \ \Big)^{p/q}dt\right)^{1/p}$$
with the usual modifications if either $p=\infty$ or $q=\infty$.  Similar definitions and considerations may be made interchanging the variables $x$ and $t$. Define $D^sf$ and $J^sf$ as
	\begin{align}
D^s_x f =\mathscr{F}^{-1}|\xi|^{s}\hat{f}(\xi), \hspace{10mm} J^s_x f =\mathscr{F}^{-1}(1+\xi^2)^{s/2}\hat{f}(\xi) 	\nonumber
\end{align} 
for $s>0$.

\textbf{Organization of the paper.} In Section 2 we state some preliminary results. Then, in Section 3  we prove Theorem  \ref{MainResult1} by using dispersive estimate, local smoothing effect and  the Lorentz space improvement estimate. Section 4 is devoted to the proof of Theorem  \ref{MainResult2}. We first recall the vector field method and wave packet in Subsection 4.1 where the asymptotic profile is bounded. And, we find an approximate ODE dynamics for the asymptotic profile. Then, in Subsection 4.2, we get the energy bounds for $Lu$ and $Lu_x$. The difference between the solution to the gDNLS and the asymptotic profile can be controlled by these  energy bounds. Finally, in Subsection 4.3, we  use the bootstrap argument to imply the desired $L^{\infty}$-norm estimate. Subsequently, the asymptotic behavior of the solution can be derived from solving the approximate ODE and estimating error terms.

\subsection*{Acknowledgements} M.S is partially supported by the National Natural Science Foundation of China, grant numbers: 12101629 and 12371123.

\section{Preliminary}                                


We recall the global well-posedness and scattering results for \eqref{gDNLS} and some useful estimates, such as Lorentz-Strichartz estimates, local smoothing estimates, maximal function estimates and interpolation inequality.

\begin{lemma}[see \cite{BWX20}]  \label{BWX20Result}
Assume that $\sigma\geq 2$, $\frac{1}{2}\leq s \leq 1$ and $u_0\in H^s(\mathbb{R})$. Then there exists a constant $\delta_0>0$, such that if  $\|u_0\|_{H^s(\mathbb{R})}\leq \delta_0 $, then the corresponding solution  $u$ to \eqref{gDNLS} is global in time and satisfies
	\begin{align}
\|u \|_{X}\lesssim \|u_0\|_{H^s(\mathbb{R})}  \label{BWX20Result1}
	\end{align}
where
\begin{align}
\|u \|_{X}:=&\|u\|_{L_t^{\infty}H_x^s }+\|\partial_x u\|_{L_x^{\infty}L_t^2} +\sup\limits_{q\in [4, N_0]}\|  u\|_{L_x^{q}L_t^{\infty}}+\|u\|_{L_t^{4}L_x^{\infty} }\notag \\
&+\big\|D_x^{s-\frac{1}{2}}\partial_x u\big\|_{L_x^{\infty}L_t^{2} } +\big\|D_x^{s-\frac{1}{2}} u\big\|_{L_x^{4}L_t^{\infty} } +\big\|D_x^{s-\frac{1}{2}} u\big\|_{L_t^{4}L_x^{\infty} }.
  \label{BWX20Result2}
	\end{align}
Here $N_0$ is any fixed arbitrary large parameter. Moreover, there exists a unique $u_{\pm}$ such that for any $0\leq s'<s$,
\begin{align}
\|u(t)- e^{it\partial^2_x}u_{\pm}\|_{H^{s'}(\mathbb{R})}\to 0 \hspace{10mm}  \text{as} \hspace{10mm}   t \to \infty.\nonumber
	\end{align}

However, if  $0<\sigma< 2$, then there exist a class of solitary wave solutions $\{\phi_c\}$ satisfying
$\|\phi_c\|_{H^1(\mathbb{R})}\to 0$
when $c$ tends to some endpoint, which is against the small data scattering statement.
\end{lemma}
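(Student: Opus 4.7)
The plan is to prove the global well-posedness and scattering statement by a standard contraction-mapping argument in the resolution space $X$ defined in \eqref{BWX20Result2}, whose norm combines the energy, local smoothing, maximal function, and Strichartz norms adapted to the one-dimensional Schrödinger flow. The starting point is the Duhamel formula
\begin{align}
u(t)=e^{it\partial_x^2}u_0 - \int_0^t e^{i(t-s)\partial_x^2}\partial_x\bigl(|u|^{2\sigma}u\bigr)(s)\,ds, \nonumber
\end{align}
and the goal is to show that the map $u\mapsto \Phi(u)$ defined by the right-hand side is a contraction on a small ball of $X$. The linear part is handled by the well-known unit-time one-dimensional inequalities: Kato's local smoothing $\|\partial_x e^{it\partial_x^2}u_0\|_{L^\infty_x L^2_t}\lesssim \|u_0\|_{L^2}$, Kenig--Ponce--Vega maximal function estimates $\|e^{it\partial_x^2}u_0\|_{L^q_x L^\infty_t}\lesssim \|u_0\|_{H^{1/2+}}$, and the $L^4_t L^\infty_x$ Strichartz estimate; fractional versions with $D_x^{s-1/2}$ follow by the same proofs once one commutes $D_x^{s-1/2}$ past the free propagator.

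The main issue is absorbing the derivative hitting the nonlinearity, and here the choice of $X$ pays off. Schematically $\partial_x(|u|^{2\sigma}u)\simeq |u|^{2\sigma}\partial_x u$, so using the dual local smoothing estimate
\begin{align}
\left\|\int_0^t e^{i(t-s)\partial_x^2}\partial_x F(s)\,ds\right\|_{L^\infty_t L^2_x}\lesssim \|F\|_{L^1_x L^2_t}, \nonumber
\end{align}
the problem reduces to controlling $\||u|^{2\sigma}\partial_x u\|_{L^1_x L^2_t}$. Hölder in $x$ and $t$ yields
\begin{align}
\bigl\||u|^{2\sigma}\partial_x u\bigr\|_{L^1_x L^2_t}\leq \|u\|_{L^{2\sigma}_x L^\infty_t}^{2\sigma}\|\partial_x u\|_{L^\infty_x L^2_t}\lesssim \|u\|_X^{2\sigma+1}, \nonumber
\end{align}
provided $2\sigma\geq 4$, which is exactly the threshold $\sigma\geq 2$; this is why the $L^q_x L^\infty_t$ norms for all $q\in[4,N_0]$ are included in $X$. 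The $L^4_t L^\infty_x$ and maximal-function components of $\Phi(u)$ are estimated by the same mechanism, and the fractional-derivative components by a Kato--Ponce/Leibniz rule applied to $D_x^{s-1/2}(|u|^{2\sigma}u)$ followed by the same Hölder splitting; here one verifies that at most one factor carries the fractional derivative while the remaining ones are placed in $L^{2\sigma}_x L^\infty_t$, keeping the count intact. A parallel computation for differences $u-v$ gives the contraction, producing a unique global solution with $\|u\|_X\lesssim \|u_0\|_{H^s}$.

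For scattering, the pre-compactness of the tail of the Duhamel integral is obtained as a byproduct: from $\|u\|_X\lesssim \delta_0$ one gets
\begin{align}
\left\|\int_{t_1}^{t_2} e^{-is\partial_x^2}\partial_x(|u|^{2\sigma}u)(s)\,ds\right\|_{H^{s'}}\to 0 \qquad \text{as } t_1,t_2\to\infty, \nonumber
\end{align}
for any $s'<s$, which defines $u_\pm$ and produces the claimed scattering with a small loss of derivative caused by the derivative nonlinearity. Finally, for the last assertion when $0<\sigma<2$ one simply exhibits the explicit solitary wave family $\phi_{\omega,c}$ given in the remark and observes that $\|\phi_{\omega,c}\|_{H^1}^2\simeq (4\omega-c^2)^{1/\sigma-1/2}\cdot\omega$ up to bounded factors, so letting $c\to -2\sqrt{\omega}$ (with $\omega$ fixed, say) drives $\|\phi_{\omega,c}\|_{H^1}\to 0$ because $1/\sigma-1/2>0$ precisely when $\sigma<2$, thereby ruling out small-data scattering below this threshold.

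The hardest technical step is the fractional-derivative bookkeeping: one must distribute $D_x^{s-1/2}$ across $|u|^{2\sigma}u$ via a fractional Leibniz rule while retaining the structure $\partial_x u$ needed to exploit the local smoothing gain, and simultaneously avoid any factor requiring $L^\infty_x L^2_t$ control of $u$ itself (which is not in $X$). This forces a careful case analysis depending on where the full derivative $\partial_x$ lands and which factor carries $D_x^{s-1/2}$, and it is the reason for including the mixed-norm terms $\|D_x^{s-1/2}\partial_x u\|_{L^\infty_x L^2_t}$, $\|D_x^{s-1/2}u\|_{L^4_x L^\infty_t}$ and $\|D_x^{s-1/2}u\|_{L^4_t L^\infty_x}$ in the definition of $X$.
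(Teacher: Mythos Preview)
The paper does not give its own proof of this lemma; it is quoted verbatim from \cite{BWX20}, as the bracketed citation in the lemma heading indicates, and is used as a black box throughout Section~3. Your sketch is a faithful outline of the argument actually carried out in \cite{BWX20}: the resolution space $X$ combines Kato smoothing, maximal-function and Strichartz norms, the derivative on the nonlinearity is absorbed via the dual smoothing estimate, and the H\"older splitting
\[
\bigl\||u|^{2\sigma}\partial_x u\bigr\|_{L^1_xL^2_t}\le\|u\|_{L^{2\sigma}_xL^\infty_t}^{2\sigma}\|\partial_x u\|_{L^\infty_xL^2_t}
\]
is precisely what forces $2\sigma\ge 4$; the scattering tail and the explicit solitary-wave family for $\sigma<2$ are likewise treated as you describe. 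So your proposal is correct and matches the approach of the cited reference, even though the present paper simply invokes the result without reproving it.
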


\begin{lemma}[see (4.21) and (4.24)  in \cite{BWX20}]  \label{BWX20Result22222}
Let $\sigma\geq 2$ and $\frac{1}{2}<s\leq 1$. Assume that $u$ is the global solution given in Lemma \ref{BWX20Result}, then
	\begin{align}
\left\|D^{s-\frac{1}{2}}_x\left(|u|^{2\sigma}\partial_x u\right) \right\|_{L_x^1L_t^2}\lesssim \|u\|^{2\sigma+1}_{X} , \label{BWX20Result22222a}\\
\left\|D^{s-\frac{1}{2}}_x\left(|u|^{2\sigma}\partial_x u\right) \right\|_{L_t^1L_x^2}\lesssim \|u\|^{2\sigma+1}_{X} , \label{BWX20Result22222b}
	\end{align}
 where $\|u\|_{X}$ is defined in \eqref{BWX20Result2}. 
\end{lemma}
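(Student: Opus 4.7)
The plan is to follow the standard Kato--Ponce fractional Leibniz/chain rule strategy: distribute the operator $D_x^{s-\frac{1}{2}}$ across the nonlinearity $|u|^{2\sigma}\partial_x u$, and then pair each resulting factor against an appropriate mixed-norm ingredient of $\|u\|_X$ by H\"older. Concretely, the fractional Leibniz rule yields schematically
\begin{align*}
D_x^{s-\frac{1}{2}}\bigl(|u|^{2\sigma}\,\partial_x u\bigr) \;\lesssim\; \bigl(D_x^{s-\frac{1}{2}}|u|^{2\sigma}\bigr)\cdot \partial_x u \;+\; |u|^{2\sigma}\cdot \bigl(D_x^{s-\frac{1}{2}}\partial_x u\bigr),
\end{align*}
and the fractional chain rule (valid since $s-\tfrac{1}{2}>0$) further reduces $D_x^{s-\frac{1}{2}}|u|^{2\sigma}$, in norm, to $|u|^{2\sigma-1}\cdot D_x^{s-\frac{1}{2}}u$. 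One is thus left with two prototype pieces,
\begin{align*}
I \;=\; |u|^{2\sigma}\cdot D_x^{s-\frac{1}{2}}\partial_x u \qquad\text{and}\qquad II \;=\; |u|^{2\sigma-1}\cdot D_x^{s-\frac{1}{2}}u\cdot \partial_x u,
\end{align*}
to estimate in each of the two target spaces $L_x^1L_t^2$ and $L_t^1L_x^2$.

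For the $L_x^1L_t^2$ estimate I would take the $L_t^2$ norm at each fixed $x$ by H\"older, placing the powers of $|u|$ into $L_t^\infty$ and the remaining derivative factor ($D_x^{s-\frac{1}{2}}\partial_x u$ in $I$, or $\partial_x u$ in $II$) into $L_t^2$; in the subsequent $L_x^1$ integration I apply H\"older with the derivative factor in $L_x^\infty$, absorbed by the smoothing norms $\|D_x^{s-\frac{1}{2}}\partial_x u\|_{L_x^\infty L_t^2}$ and $\|\partial_x u\|_{L_x^\infty L_t^2}$, and the remaining factors of $|u|$ and $D_x^{s-\frac{1}{2}}u$ in $L_x^q$ with $q\in[4,N_0]$ and $L_x^4$ respectively, absorbed by $\sup_{q\in[4,N_0]}\|u\|_{L_x^q L_t^\infty}$ and $\|D_x^{s-\frac{1}{2}}u\|_{L_x^4 L_t^\infty}$. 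The $L_t^1L_x^2$ estimate is handled dually: after a Minkowski swap $\|\cdot\|_{L_t^1L_x^2}\leq \|\cdot\|_{L_x^2L_t^1}$, Cauchy--Schwarz in $t$ pulls out the smoothing factor $\|D_x^{s-\frac{1}{2}}\partial_x u\|_{L_x^\infty L_t^2}$ (or $\|\partial_x u\|_{L_x^\infty L_t^2}$) against a diagonal $L^{4\sigma}_{t,x}$-type norm of $u$; alternatively, one takes $L_x^2$ first by H\"older (with $|u|$'s in $L_x^\infty$) and closes in $t$ via the Strichartz ingredients $\|u\|_{L_t^4L_x^\infty}$ and $\|D_x^{s-\frac{1}{2}}u\|_{L_t^4L_x^\infty}$.

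The main obstacle is purely combinatorial: with $\sigma\geq 2$ there are $2\sigma+1\geq 5$ copies of $u$ (or a fractional derivative thereof) to distribute, and the H\"older exponents in $x$ and $t$ must close simultaneously in both legs of the mixed norm. The hypothesis $\sigma\geq 2$ is exactly what provides enough room: every surplus power of $|u|$ can be absorbed into $\sup_{q\in[4,N_0]}\|u\|_{L_x^q L_t^\infty}$ by choosing $N_0$ sufficiently large, while the diagonal $L^{4\sigma}_{t,x}$-type norms that arise in the $L_t^1L_x^2$ argument are tamed by interpolation between $\|u\|_{L_t^\infty H_x^s}$ (which controls $\|u\|_{L_t^\infty L_x^2}$) and the Strichartz/maximal-function ingredients of $\|u\|_X$. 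The strict inequality $s>\frac{1}{2}$ is in turn what makes the fractional chain rule applicable and ensures that the derivative norms $\|D_x^{s-\frac{1}{2}}u\|_{L_x^4L_t^\infty}$ and $\|D_x^{s-\frac{1}{2}}u\|_{L_t^4L_x^\infty}$ in $\|u\|_X$ are genuinely non-trivial carriers for the $D_x^{s-\frac{1}{2}}$ factor appearing in the chain-rule term $II$.
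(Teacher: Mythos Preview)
The paper does not actually prove this lemma; it is quoted directly from \cite{BWX20} (their estimates (4.21) and (4.24)), so there is no argument in the present paper to compare against. Your overall strategy---fractional Leibniz plus fractional chain rule, then H\"older against the smoothing, maximal-function, and Strichartz ingredients of $\|u\|_X$---is exactly the standard one, and it is what \cite{BWX20} does. For \eqref{BWX20Result22222a} your exponent bookkeeping is correct: all of the relevant $X$-norms ($\|D_x^{s-\frac12}\partial_xu\|_{L_x^\infty L_t^2}$, $\|\partial_xu\|_{L_x^\infty L_t^2}$, $\|u\|_{L_x^qL_t^\infty}$, $\|D_x^{s-\frac12}u\|_{L_x^4L_t^\infty}$) carry $L_x$ on the outside, matching the target $L_x^1L_t^2$, so the H\"older closure goes through as you describe.

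There is, however, a genuine slip in your treatment of \eqref{BWX20Result22222b}. The Minkowski inequality goes the \emph{other} way: one has $\|\cdot\|_{L_x^2L_t^1}\le\|\cdot\|_{L_t^1L_x^2}$, not the inequality you wrote, so the ``swap to $L_x^2L_t^1$ and then Cauchy--Schwarz in $t$'' route is not available. Your alternative (Kato--Ponce in $L_x^2$ at fixed $t$, then H\"older in $t$) is the right framework, but you have not said how to close the piece where $D_x^{s-\frac12}$ lands on $\partial_xu$: the only control on $D_x^{s-\frac12}\partial_xu$ in $X$ is the smoothing norm $L_x^\infty L_t^2$, which has the variables in the wrong order for a $L_t$-outer target, and $\|D_x^{s-\frac12}\partial_xu\|_{L_x^2}$ at fixed $t$ asks for $H_x^{s+\frac12}$, which is not available. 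The resolution used in \cite{BWX20} (and visible in miniature in the $s=\tfrac12$ computation in the proof of Lemma~\ref{LorImpr} here) is to route this term through the diagonal space: peel off $2(\sigma-1)$ copies of $u$ in $L_t^2L_x^\infty$ (via $\|u\|_{L_t^4L_x^\infty}$), leaving the high-derivative piece in $L_t^2L_x^2=L_x^2L_t^2$, and \emph{then} extract $\|D_x^{s-\frac12}\partial_xu\|_{L_x^\infty L_t^2}$ against the remaining two $u$'s in $\|u\|_{L_x^4L_t^\infty}$. You should rewrite the $L_t^1L_x^2$ argument along these lines.
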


Let us recall the well-known dispersive estimates and Strichartz estimates  for  the Schr\"odinger semi-group.

\begin{lemma}[Dispersive estimates] 
	Let $2\leq r\leq \infty$ and  $\frac{1}{r}+\frac{1}{r'}=1$. We have
\begin{align}
\big\|e^{it\partial^2_x} u_0 \big\|_{ L^{r}_{x}}\lesssim t^{-(\frac{1}{2}-\frac{1}{r})}\|u_0 \|_{L^{r'}_{x}}. \label{dispEsti} 
\end{align}
\end{lemma}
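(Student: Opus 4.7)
The plan is the textbook two-endpoints-plus-interpolation argument, since \eqref{dispEsti} is the standard dispersive family for the one-dimensional free Schr\"odinger evolution. First I would establish the $r=2$ endpoint: on the Fourier side $e^{it\partial_x^2}$ acts by the unimodular multiplier $e^{-it\xi^2}$, so Plancherel immediately yields $\|e^{it\partial_x^2}u_0\|_{L^2_x}=\|u_0\|_{L^2_x}$, which is \eqref{dispEsti} at $r=2$ with trivial $t$-dependence.

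Next I would handle the $r=\infty$ endpoint via the explicit kernel. Writing
\begin{align}
e^{it\partial_x^2}u_0(x)=\frac{1}{2\pi}\int_{\mathbb{R}}e^{i(x\xi-t\xi^2)}\widehat{u_0}(\xi)\,d\xi, \nonumber
\end{align}
one completes the square $x\xi-t\xi^2=-t(\xi-\frac{x}{2t})^2+\frac{x^2}{4t}$ and evaluates the resulting Fresnel integral to obtain the representation
\begin{align}
e^{it\partial_x^2}u_0(x)=\frac{1}{\sqrt{4\pi it}}\int_{\mathbb{R}}e^{i(x-y)^2/(4t)}u_0(y)\,dy. \nonumber
\end{align}
Since the convolution kernel has modulus $(4\pi t)^{-1/2}$, a direct application of the triangle inequality gives $\|e^{it\partial_x^2}u_0\|_{L^\infty_x}\lesssim t^{-1/2}\|u_0\|_{L^1_x}$, which is \eqref{dispEsti} at $r=\infty$.

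To cover the intermediate range $2<r<\infty$, I would invoke Riesz--Thorin interpolation on the linear operator $T_t:u_0\mapsto e^{it\partial_x^2}u_0$, using the two bounds just established: $\|T_t\|_{L^2\to L^2}\leq 1$ and $\|T_t\|_{L^1\to L^\infty}\lesssim t^{-1/2}$. Choosing $\theta\in[0,1]$ so that $\frac{1}{r}=\frac{1-\theta}{2}+\frac{\theta}{\infty}$, i.e.\ $\theta=1-\frac{2}{r}$, gives $\frac{1}{r'}=1-\frac{1}{r}=\frac{1-\theta}{2}+\frac{\theta}{1}$, and the interpolated norm bound is $t^{-\theta/2}=t^{-(1/2-1/r)}$, precisely \eqref{dispEsti}.

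There is essentially no genuine obstacle here: the only point demanding a small amount of care is the justification of the Fresnel integral for $u_0$ merely in $L^1$, which I would resolve by first proving the kernel formula for Schwartz data (where the oscillatory integral converges absolutely after a standard Gaussian regularization $e^{-\epsilon\xi^2}$ and $\epsilon\downarrow 0$ limit) and then extending by density to $L^1$ via the $L^1\to L^\infty$ boundedness of the convolution with the bounded kernel.
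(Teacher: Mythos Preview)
Your proposal is correct and follows the standard approach. The paper does not give a proof of this lemma (it is recorded as a well-known preliminary result), but in the introduction it explicitly notes that \eqref{SchStri} ``is derived by interpolating between the mass identity and the dispersive estimate \eqref{SchStri11}'', which is precisely the Plancherel $+$ kernel bound $+$ Riesz--Thorin argument you outline.
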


We say $(q,p)$ is a Schr\"odinger $d$-admissible pair if
$$4 \leq q \leq \infty, \hspace{5mm}2 \leq  p \leq \infty  \hspace{5mm} \text{and}  \hspace{5mm} \frac{2}{q}+\frac{1}{p} = \frac{1}{2}.$$

\begin{lemma}[Strichartz estimates, see \cite{Takaoka01} ] 
Let  $(q,p)$ and  $(\tilde{q},\tilde{p})$   be Schr\"odinger admissible pairs. Then
\begin{align}
  \big\|e^{it\partial^2_x} u_0 \big\|_{L^q_{t}L^p_{x}}&\lesssim \|u_0 \|_{L^{2}_{x}},  \label{StriEsti1} \\
 \left\|\int_0^t e^{i(t-s)\partial^2_x} F(x, s)ds \right\|_{L^q_{t}L^p_{x}}&\lesssim \|F \|_{L^{\tilde{q}'}_{t}L^{\tilde{p}'}_{x}}   \label{StriEsti2}
	\end{align}
where $\frac{1}{\tilde{q}}+\frac{1}{\tilde{q}'} =\frac{1}{\tilde{p}}+\frac{1}{\tilde{p}'} =1$.
\end{lemma}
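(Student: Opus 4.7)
The plan is to derive both bounds in Lemma 2.4 from the dispersive estimate \eqref{dispEsti} via the classical $TT^*$ argument of Ginibre--Velo coupled with the one-dimensional Hardy--Littlewood--Sobolev (HLS) inequality. Setting $Tu_0(t,\cdot) := e^{it\partial_x^2} u_0$, the homogeneous estimate \eqref{StriEsti1} is by duality equivalent to the bilinear bound
\begin{equation*}
\bigl\|TT^*F\bigr\|_{L^q_t L^p_x} \lesssim \|F\|_{L^{q'}_t L^{p'}_x}, \qquad (TT^*F)(t) = \int_{\mathbb R} e^{i(t-s)\partial_x^2} F(s)\, ds.
\end{equation*}

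First I would apply Minkowski in $s$ together with the dispersive estimate, producing the pointwise-in-$t$ bound
\begin{equation*}
\|(TT^*F)(t)\|_{L^p_x} \lesssim \int_{\mathbb R} |t-s|^{-(\frac{1}{2}-\frac{1}{p})} \|F(s)\|_{L^{p'}_x}\, ds,
\end{equation*}
which exhibits $TT^*F$ as the convolution of a one-dimensional Riesz kernel with exponent $\alpha := \tfrac12 - \tfrac1p$ against the scalar $s \mapsto \|F(s)\|_{L^{p'}_x}$. One-dimensional HLS then maps $L^{q'}_t \to L^q_t$ precisely when $\tfrac{1}{q'}-\tfrac{1}{q} = 1-\alpha$, which rearranges to the admissibility condition $\tfrac{2}{q}+\tfrac{1}{p}=\tfrac{1}{2}$. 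The HLS hypothesis $1 < q' < q < \infty$ is satisfied for $4 \le q < \infty$; the remaining case $q=\infty$, $p=2$ is immediate from Plancherel.

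For the inhomogeneous estimate \eqref{StriEsti2} I would first establish the non-retarded version with $\int_{\mathbb R}$ in place of $\int_0^t$ by composing $T$ with the adjoint of $\widetilde T$ (built from the second admissible pair $(\tilde q, \tilde p)$) and running the same mixed-exponent HLS argument. The causal truncation is then restored by the Christ--Kiselev lemma, whose hypothesis $q > \tilde q'$ is automatically verified since $\tilde q' \le 4/3 < 4 \le q$ on the admissible range.

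The only genuine subtlety is the one-dimensional endpoint $(q,p) = (4, \infty)$, where $p' = 1$. In higher dimensions the analogous endpoint would require the Keel--Tao bilinear machinery, but here it still lies strictly inside the HLS range ($q' = 4/3$), so the argument above handles it directly. I therefore expect no serious obstacle; the main care is simply in the bookkeeping of exponents when mixing two distinct admissible pairs on the two sides of \eqref{StriEsti2}.
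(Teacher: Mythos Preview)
Your proposal is correct and follows the standard Ginibre--Velo $TT^*$ route; there is nothing to compare against, since the paper does not actually prove this lemma but simply quotes it from \cite{Takaoka01}. The only point worth flagging is the duality step at the one-dimensional endpoint $(q,p)=(4,\infty)$: because $(L^\infty_x)^*\neq L^1_x$, passing from the $T^*$ bound back to the $T$ bound is not a pure functional-analytic duality. This is easily repaired---either by a density argument (the propagator maps $L^1\cap L^2$ into continuous functions, so one can test against $C_c$ and recover the $L^4_tL^\infty_x$ norm), or simply by noting that $q=4>2$ lies strictly above the Keel--Tao forbidden exponent, so their endpoint machinery covers it---but it is worth one sentence of acknowledgment rather than being folded into ``bookkeeping''.
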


\begin{definition}[Lorentz spaces] Let $1\leq p<\infty$ and $1\leq q\leq\infty$. The Lorentz space $L^{p,q}(\mathbb{R})$ is the space of measurable functions $f:\mathbb{R} \to \mathbb{C}$ for which the quasi-norm
$$\|f\|_{L^{p,q}(\mathbb{R})}=p^{\frac{1}{q}}\left\| \lambda \big| \{x\in \mathbb{R}: |f(x)|>\lambda \}\big|^{\frac{1}{p}} \right\|_{L^{q}((0, \infty), \frac{d\lambda}{\lambda})}$$
is finite. Here, $|A|$ denotes the Lebesgue measure of the set $A\subseteq \mathbb{R}$.
\end{definition}

Note that $L^{p,p}(\mathbb{R})\simeq L^{p}(\mathbb{R})$, and $L^{p,\infty}(\mathbb{R})$ coincides with the weak $L^{p}(\mathbb{R})$ space. If $1\leq p<\infty$ and $1\leq q<r\leq\infty$, then  $L^{p,q}(\mathbb{R})\hookrightarrow  L^{p,r}(\mathbb{R})$. 

\begin{lemma}[H\"older inequality in Lorentz spaces] Let $1\leq p,p_1,p_2<\infty$, $1\leq q,q_1,q_2\leq\infty$, and $\frac{1}{p}=\frac{1}{p_1}+\frac{1}{p_2}$, $\frac{1}{q}=\frac{1}{q_1}+\frac{1}{q_2}$. Then,
\begin{align}\|fg\|_{L^{p,q}(\mathbb{R})}\lesssim \|f\|_{L^{p_1,q_1}(\mathbb{R})}\|g\|_{L^{p_2,q_2}(\mathbb{R})}. \label{Holder- Lor1}
	\end{align}
\end{lemma}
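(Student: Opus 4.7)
The plan is to prove the Lorentz-space Hölder inequality via the decreasing rearrangement representation of the quasi-norm, which is the classical route. First I would recall the equivalent expression
\begin{align*}
\|f\|_{L^{p,q}(\mathbb{R})} \simeq \Big\| t^{1/p} f^*(t) \Big\|_{L^q\!\left((0,\infty),\,\frac{dt}{t}\right)},
\end{align*}
where $f^*$ denotes the decreasing rearrangement of $|f|$ on $(0,\infty)$ (this follows from the layer-cake identity $|\{|f|>\lambda\}| = |\{t : f^*(t)>\lambda\}|$ and a change of variable in the defining integral). This reduces the whole statement to a one-variable weighted inequality on $(0,\infty)$ with the scale-invariant measure $\frac{dt}{t}$.

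The main nonlinear step is to pass the product through the rearrangement. The key pointwise bound I will use is
\begin{align*}
(fg)^*(t) \le f^*(t/2)\, g^*(t/2),
\end{align*}
which follows from the set inclusion $\{x : |f(x)g(x)| > f^*(t/2)\,g^*(t/2)\} \subseteq \{|f|>f^*(t/2)\} \cup \{|g|>g^*(t/2)\}$; by the definition of the rearrangement each of the latter two sets has measure at most $t/2$, so their union has measure at most $t$, giving the claimed bound.

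Inserting this into the integral representation and changing variables $t \mapsto 2t$ (which only generates an innocuous factor $2^{1/p}$), the left-hand side is bounded by
\begin{align*}
\left( \int_0^\infty \bigl[ t^{1/p_1} f^*(t) \bigr]^q \bigl[ t^{1/p_2} g^*(t) \bigr]^q \frac{dt}{t} \right)^{1/q},
\end{align*}
where I used $\frac{1}{p} = \frac{1}{p_1}+\frac{1}{p_2}$ to split the weight. Now I would apply the classical Hölder inequality in $L^q\!\bigl((0,\infty),\frac{dt}{t}\bigr)$ with conjugate exponents $q_1/q$ and $q_2/q$, admissible precisely because $\frac{1}{q_1}+\frac{1}{q_2} = \frac{1}{q}$. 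This produces $\|t^{1/p_1}f^*\|_{L^{q_1}(dt/t)} \cdot \|t^{1/p_2}g^*\|_{L^{q_2}(dt/t)}$, and reversing the equivalence in step one yields the result.

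The main technical point, and the only part requiring care, is the treatment of the endpoint cases $q=\infty$ or $q_i = \infty$, where the $L^{q}(dt/t)$ expression must be read as an essential supremum and the Hölder step becomes trivial; these can be handled separately and just absorb into the implicit constant in $\lesssim$, as do the constants arising from the rearrangement equivalence.
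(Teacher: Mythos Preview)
Your argument is correct and is the standard route via decreasing rearrangements: the equivalence with $\|t^{1/p}f^*(t)\|_{L^q(dt/t)}$, the pointwise subadditivity bound $(fg)^*(t)\le f^*(t/2)g^*(t/2)$, and H\"older in $L^q(dt/t)$ with exponents $q_1/q$, $q_2/q$ are all valid, and your handling of the endpoint cases is appropriate.

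The paper, however, does not prove this lemma at all; it is recorded in the preliminaries as a known fact and used as a black box. So there is no ``paper's own proof'' to compare against. What you have supplied is exactly the classical proof one finds in standard interpolation references, and it more than suffices for the paper's purposes.
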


Applying Hardy-Littlewood-Sobolev inequality in Lorentz spaces yields the following Lorentz-space improvement.
\begin{lemma}[Lorentz-Strichartz estimates]\label{Lor-Stri}
	Let $(q,p)$ and  $(\tilde{q},\tilde{p})$  be Schr\"odinger admissible pairs. Then
\begin{align}
 \big\|e^{it\partial^2_x} u_0 \big\|_{L^{q,2}_{t}L^p_{x}}&\lesssim \|u_0 \|_{L^{2}_{x}},  \label{Lor-Stri1} \\
 \left\|\int_0^t e^{i(t-s)\partial^2_x} F(x, s)ds \right\|_{L^{q,2}_{t}L^p_{x}}&\lesssim \|F \|_{L^{\tilde{q}',2}_{t}L^{\tilde{p}'}_{x}}   \label{Lor-Stri2}
	\end{align}
where $\frac{1}{\tilde{q}}+\frac{1}{\tilde{q}'} =\frac{1}{\tilde{p}}+\frac{1}{\tilde{p}'} =1$.
\end{lemma}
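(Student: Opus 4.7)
The plan is to derive both estimates from the dispersive bound \eqref{dispEsti} via the $TT^*$ identification combined with Young's inequality in Lorentz spaces. The key observation is that for a Schrödinger admissible pair $(q,p)$, the dispersive decay exponent $\alpha := \tfrac{1}{2}-\tfrac{1}{p}$ equals $\tfrac{2}{q}$, so the convolution kernel $|t|^{-\alpha}$ lies exactly in the weak Lebesgue class $L^{q/2,\infty}_t$. This borderline information is the source of the Lorentz refinement: it allows us to place the output in $L^{q,2}_t$ rather than the coarser $L^q_t$.

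First I would establish the homogeneous bound \eqref{Lor-Stri1}. By the standard $TT^*$ duality, it reduces to the bilinear estimate
\begin{align*}
\left\|\int_{\R} e^{i(t-s)\partial_x^2} F(s)\, ds\right\|_{L^{q,2}_t L^p_x} \lesssim \|F\|_{L^{q',2}_t L^{p'}_x}.
\end{align*}
Applying \eqref{dispEsti} pointwise in $t$ dominates the inner $L^p_x$-norm by the scalar convolution $\bigl(|\cdot|^{-\alpha}\ast\|F(\cdot)\|_{L^{p'}_x}\bigr)(t)$. Young's inequality in Lorentz spaces, applied with exponent relation $\tfrac{1}{q/2}+\tfrac{1}{q'}=1+\tfrac{1}{q}$ and second-index relation $\tfrac{1}{\infty}+\tfrac{1}{2}\geq\tfrac{1}{2}$, then yields the required $L^{q,2}_t$ control, using that $\||\cdot|^{-\alpha}\|_{L^{q/2,\infty}_t}<\infty$.

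For the inhomogeneous bound \eqref{Lor-Stri2}, I would first handle the non-retarded operator $\int_{\R} e^{i(t-s)\partial_x^2} F(s)\, ds$: when $(q,p)=(\tilde q,\tilde p)$ this is precisely the bilinear estimate above, and for mismatched admissible pairs it follows by composing the homogeneous estimate with its dual
\begin{align*}
\left\|\int_{\R} e^{-is\partial_x^2} F(s)\, ds\right\|_{L^2_x} \lesssim \|F\|_{L^{\tilde q',2}_t L^{\tilde p'}_x}.
\end{align*}
The retarded operator $\int_0^t$ is then recovered via the Christ--Kiselev lemma, whose extension to Lorentz spaces applies here because $q,\tilde q\geq 4$ forces the strict inequality $q>\tilde q'$. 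The main technical point is to verify that the second Lorentz index is preserved at each step (not degrading from $2$ to $q$) and that the Christ--Kiselev step transfers cleanly to the Lorentz scale; modulo this bookkeeping, the proof is a direct refinement of the classical Keel--Tao argument and parallels the treatment in \cite{FKVZ24}.
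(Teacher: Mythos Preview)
Your proposal is correct and follows essentially the same route as the paper, which only records the one-line justification ``Applying Hardy-Littlewood-Sobolev inequality in Lorentz spaces yields the following Lorentz-space improvement'' before stating the lemma. Your use of O'Neil's Young inequality with the kernel $|t|^{-2/q}\in L^{q/2,\infty}_t$ is precisely the Lorentz-space HLS step the paper invokes, and the surrounding $TT^*$/Christ--Kiselev scaffolding you supply is the standard way to turn that into the full Strichartz statement (as in \cite{FKVZ24}, which the paper also cites for this circle of ideas).
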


By using the global space-time bounds in  Lemma \ref{BWX20Result} and Lorentz-Strichartz estimates in  Lemma \ref{Lor-Stri}, we obtain the following  lemma  record space-time bound  in Lorentz spaces  for the solution  to \eqref{gDNLS}. The  Lorentz improvement  plays a key role in the proof  of Theorem \ref{MainResult1}.

\begin{lemma}\label{LorImpr}
Let $\sigma\geq 2$, $0 <\varepsilon \ll1$ and $(q, p)=\left(\frac{4\sigma(2+\varepsilon)-12}{\varepsilon}, \frac{2\sigma(2+\varepsilon)-6}{\sigma(2+\varepsilon)-3-\varepsilon}\right)$ be a Schr\"odinger admissible pair. Assume that $u(t)$ is the global solution to \eqref{gDNLS} given in Lemma  \ref {BWX20Result}  with initial data $u_0\in H^{\frac{1}{2}+\frac{\sigma-2}{2\sigma-3}}(\mathbb{R})$, then we have the global space-time bound
	\begin{equation}
 \| u |_{L^{q,2}_{t}L^{p}_{x}}+\big\|D_x^{\frac{\sigma-2}{2\sigma-3}}u\big \|_{L^{q,2}_{t}L^{p}_{x}}\leq C(\|u_0\|_{ H^{\frac{1}{2}+\frac{\sigma-2}{2\sigma-3}}}) . \label{LorImpr1}
	\end{equation}
\end{lemma}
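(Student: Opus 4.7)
The plan is to start from Duhamel's formula
\begin{equation*}
u(t) = e^{it\partial_x^2} u_0 - \int_0^t e^{i(t-s)\partial_x^2} \partial_x\bigl(|u|^{2\sigma} u\bigr)(s)\, ds
\end{equation*}
and apply $D_x^{\alpha}$ for $\alpha = \tfrac{\sigma-2}{2\sigma-3} \in [0, \tfrac{1}{2})$, which commutes with both $e^{it\partial_x^2}$ and the time integral. For the homogeneous part, the Lorentz-Strichartz bound \eqref{Lor-Stri1} applied to the admissible pair $(q,p)$ gives directly
\begin{equation*}
\|e^{it\partial_x^2} u_0\|_{L^{q,2}_t L^p_x} + \|D_x^{\alpha} e^{it\partial_x^2} u_0\|_{L^{q,2}_t L^p_x} \lesssim \|u_0\|_{L^2_x} + \|D_x^{\alpha} u_0\|_{L^2_x} \lesssim \|u_0\|_{H^{1/2 + \alpha}},
\end{equation*}
since $D_x^{\alpha}$ is a Fourier multiplier.

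For the inhomogeneous part, the key step is to apply \eqref{Lor-Stri2} with the endpoint dual pair $(\tilde q, \tilde p) = (\infty, 2)$, which is Schr\"odinger admissible and yields $(\tilde q', \tilde p') = (1, 2)$. Combined with the elementary embedding $L^1_t = L^{1,1}_t \hookrightarrow L^{1,2}_t$ (valid because $L^{p,q_1} \hookrightarrow L^{p,q_2}$ when $q_1 \leq q_2$), this reduces matters to estimating $\|D_x^{\alpha} \partial_x(|u|^{2\sigma} u)\|_{L^1_t L^2_x}$. Expanding via the chain rule
\begin{equation*}
\partial_x(|u|^{2\sigma} u) = (\sigma+1)|u|^{2\sigma} u_x + \sigma |u|^{2\sigma-2} u^2 \bar u_x,
\end{equation*}
I would apply Lemma \ref{BWX20Result22222} with $s = \tfrac{1}{2} + \alpha$ (which lies in the admissible range $[\tfrac{1}{2},1]$ since $\alpha \in [0,\tfrac{1}{2})$) to bound the first term in $L^1_t L^2_x$ by $\|u\|_X^{2\sigma+1}$; the second term has the same structure modulo an antiholomorphic factor, so the identical fractional Leibniz argument gives the same bound. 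Finally, Lemma \ref{BWX20Result} yields $\|u\|_X \lesssim \|u_0\|_{H^{1/2+\alpha}}$, closing the estimate.

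The main delicate point is the legitimacy of the Lorentz-Strichartz inequality \eqref{Lor-Stri2} at the endpoint $(\tilde q, \tilde p) = (\infty, 2)$, where $\tilde q' = 1$ sits at the boundary of the Lorentz duality pairing. Should this endpoint usage require justification, a clean alternative route is to prove the homogeneous Lorentz improvement \eqref{Lor-Stri1} via Hardy-Littlewood-Sobolev in Lorentz spaces and then deduce the retarded Duhamel estimate by Christ-Kiselev; the non-resonance hypothesis $q \neq 1$ holds trivially. A secondary point is that Lemma \ref{BWX20Result22222} is stated for $|u|^{2\sigma}\partial_x u$ rather than the full $\partial_x(|u|^{2\sigma}u)$, but since the proof there treats $\partial_x u$ as a generic $\dot H^{1/2}$-scaled factor, it adapts with no change to the $\bar u_x$ piece.
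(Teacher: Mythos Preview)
Your argument is correct and matches the paper's route: Duhamel, the Lorentz--Strichartz estimate \eqref{Lor-Stri2} at the dual endpoint $(\tilde q',\tilde p')=(1,2)$, the embedding $L^1_t=L^{1,1}_t\hookrightarrow L^{1,2}_t$, and then \eqref{BWX20Result22222b} with $s=\tfrac12+\alpha$. One small caveat: Lemma~\ref{BWX20Result22222} is stated only for $\tfrac12<s\le 1$, so for the undifferentiated term $\|u\|_{L^{q,2}_tL^p_x}$ (the case $\alpha=0$) the paper instead bounds $\||u|^{2\sigma}u_x\|_{L^1_tL^2_x}$ by a direct H\"older split $\|u\|_{L^4_xL^\infty_t}^2\|u_x\|_{L^\infty_xL^2_t}\|u\|_{L^{4(\sigma-1)}_tL^\infty_x}^{2(\sigma-1)}\lesssim\|u\|_X^{2\sigma+1}$, which is of course the easier (fractional-derivative-free) case.
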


	\begin{proof}
 Using the Duhamel formula 
$$u(t)=e^{it\partial^2_x}u_0+\int_{0}^{t} e^{i(t-s)\partial^2_x}\partial_{x}\big(|u|^{2\sigma}u\big)(s) ds$$
and Lemma \ref{Lor-Stri}, we may estimate
	\begin{align}
\| u\|_{L^{q,2}_{t}L^{p}_{x}} &\lesssim \big\|e^{it\partial^2_x}u_0\big \|_{L^{q,2}_{t}L^{p}_{x}}+\left\|\int_{0}^{t} e^{i(t-s)\partial^2_x}\partial_{x}\big(|u|^{2\sigma}u\big)(s) ds\right \|_{L^{q,2}_{t}L^{p}_{x}} \notag \\ 
&\lesssim  \|u_0\|_{L^{2}_x}+\left\|\partial_{x}\big(|u|^{2\sigma}u\big)\right \|_{L^{1,2}_{t}L^{2}_{x}} \notag \\ 
&\lesssim  \|u_0\|_{L^{2}_x}+\left\||u|^{2}\partial_x u\right \|_{L^{2}_{t}L^{2}_{x}} \big\||u|^{2(\sigma-1)}\big\|_{L^{2}_{t}L^{\infty}_{x}} \notag \\ 
&\lesssim  \|u_0\|_{L^{2}_x}+ \|u \|^2_{L^{4}_{x}L^{\infty}_{t}} \|\partial_x u \|_{L^{2}_{x}L^{\infty}_{t}}  \| u  \|^{2(\sigma-1)}_{L^{4(\sigma-1)}_{t}L^{\infty}_{x}} \notag  
	\end{align}
which by global well-posedness result in Lemma \ref{BWX20Result} further yields 
	\begin{align}
\| u\|_{L^{q,2}_{t}L^{p}_{x}}\lesssim  \|u_0\|_{L^{2}_x}+\|u\|^{2\sigma+1}_{X}<\infty.  \label{LorImpr2}
	\end{align}

It follows from Lorentz-Strichartz estimates in  Lemma \ref{Lor-Stri}, the embedding inequality and \eqref{BWX20Result22222b}  with $s=\frac{1}{2}+\frac{\sigma-2}{2\sigma-3}<1$ that
	\begin{align}
&\big\|D_x^{\frac{\sigma-2}{2\sigma-3}}u\big \|_{L^{q,2}_{t}L^{p}_{x}} \notag \\ 
 &\lesssim \big\|D_x^{\frac{\sigma-2}{2\sigma-3}}e^{it\partial^2_x}u_0\big \|_{L^{q,2}_{t}L^{p}_{x}}+\left\|D_x^{\frac{\sigma-2}{2\sigma-3}}\int_{0}^{t} e^{i(t-s)\partial^2_x}\partial_x\big(|u|^{2\sigma}u\big)(s) ds\right \|_{L^{q,2}_{t}L^{p}_{x}} \notag \\ 
&\lesssim  \|u_0\|_{ H^{\frac{1}{2}}}+\left\|D_x^{\frac{\sigma-2}{2\sigma-3}}\big(|u|^{2\sigma}u_{x}\big)\right \|_{L^{1}_{t}L^{2}_{x}} 
\lesssim  \|u_0\|_{ H^{\frac{1}{2}}}+\|u\|^{2\sigma+1}_{X}.  \label{LorImpr3}
	\end{align}
 Then, collecting  \eqref{LorImpr2} and  \eqref{LorImpr3} implies  \eqref{LorImpr1}. We finish the proof.
	\end{proof}

Next  are the local smoothing effects and maximal function estimates.

\begin{lemma}[Local smoothing effects,  see \cite{Takaoka01}]\label{Katosmoothing} 
 We have
\begin{align}
\big\|D^{\frac{1}{2}}_x e^{it\partial^2_x}u_0 \big\|_{L^{\infty}_{x}L^2_{t}} &\lesssim \|u_0 \|_{L^{2}_{x}}, \label{KatoEsti1} \\
 \left\|D^{\frac{1}{2}}_x\int_{0}^{t} e^{i(t-s)\partial^2_x}F(x, s) ds\right\|_{L^{\infty}_{t}L^2_{x}} &\lesssim \|F \|_{L^{1}_{x}L^2_{t}}, \label{KatoEsti2} \\
 \left\|\partial_{x}\int_{0}^{t} e^{i(t-s)\partial^2_x}F(x, s) ds\right\|_{L^{\infty}_{x}L^2_{t}}& \lesssim \|F \|_{L^{1}_{x}L^2_{t}}. \label{KatoEsti3} 
	\end{align}
\end{lemma}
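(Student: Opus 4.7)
The plan is to establish the three inequalities in order, since \eqref{KatoEsti2} and \eqref{KatoEsti3} will follow from \eqref{KatoEsti1} by $TT^\ast$/duality arguments once the free smoothing estimate is in hand.

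\textbf{Step 1: Free smoothing estimate \eqref{KatoEsti1}.} The key is a one-dimensional Plancherel computation. Writing
\[
D^{1/2}_x e^{it\partial_x^2} u_0(x) = \int_{\mathbb{R}} |\xi|^{1/2} e^{ix\xi - it\xi^2}\,\widehat{u_0}(\xi)\,d\xi,
\]
I would split the integral into the regions $\xi>0$ and $\xi<0$ and make the change of variables $\tau = \xi^2$, so that $d\xi = d\tau/(2\sqrt{\tau})$. The factor $|\xi|^{1/2}$ exactly absorbs the Jacobian square root, yielding for each fixed $x$
\[
D^{1/2}_x e^{it\partial_x^2} u_0(x) = \tfrac{1}{2}\int_0^\infty e^{-it\tau}\Big[\tau^{-1/2}e^{ix\sqrt{\tau}}\,\widehat{u_0}(\sqrt{\tau}) + \tau^{-1/2}e^{-ix\sqrt{\tau}}\,\widehat{u_0}(-\sqrt{\tau})\Big]\tfrac{d\tau}{2}.
\]
Plancherel in $t$ now bounds the $L^2_t$-norm by the $L^2_\tau$-norm of the bracket, and another change of variables shows this equals $C\|\widehat{u_0}\|_{L^2_\xi} = C\|u_0\|_{L^2_x}$, uniformly in $x$. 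Taking the supremum over $x$ gives \eqref{KatoEsti1}.

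\textbf{Step 2: Dual smoothing estimate and \eqref{KatoEsti2}.} Dualizing \eqref{KatoEsti1} with respect to the pairing in $(x,t)$, one obtains
\[
\Big\| \int_{\mathbb{R}} e^{-is\partial_x^2} D^{1/2}_x F(\cdot,s)\,ds \Big\|_{L^2_x} \lesssim \|F\|_{L^1_x L^2_t}.
\]
Composing with the $L^2_x$-isometry $e^{it\partial_x^2}$ and replacing the full line in $s$ by $(0,t)$ via the Christ--Kiselev lemma (which applies because $L^1_x L^2_t$ and $L^\infty_t L^2_x$ are strictly between endpoints), I obtain \eqref{KatoEsti2}.

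\textbf{Step 3: Maximal--smoothing bilinear estimate \eqref{KatoEsti3}.} This is a genuine $TT^\ast$ statement coupling the smoothing estimate to itself. By duality, \eqref{KatoEsti3} is equivalent to showing that the operator
\[
TG(x) = \int_{\mathbb{R}} \partial_x e^{i(t-s)\partial_x^2}(x) G(\cdot,s)\,ds \quad \text{maps} \quad L^1_x L^2_t \to L^\infty_x L^2_t
\]
is bounded, which reduces via an untruncated $\int_{\mathbb{R}}$ argument (again passing from full line to $(0,t)$ via Christ--Kiselev) to the free kernel estimate
\[
\Big\|\partial_x \int_{\mathbb{R}} e^{i(t-s)\partial_x^2}F(x,s)\,ds\Big\|_{L^\infty_x L^2_t} \lesssim \|F\|_{L^1_x L^2_t}.
\]
This in turn factors through Step 1 and its dual: split $\partial_x = D^{1/2}_x \cdot D^{1/2}_x \cdot (\text{sign})$ (a harmless modulation on the Fourier side), apply the dual smoothing estimate in the $s$-integration to land in $L^2_x$, then apply the forward smoothing estimate to exit in $L^\infty_x L^2_t$.

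The main technical obstacle is handling the retarded time cutoff $\int_0^t$ rather than $\int_{\mathbb{R}}$, since the natural Fourier/Plancherel arguments produce untruncated estimates. I would address this invariably with the Christ--Kiselev lemma, which applies here because all the target spaces lie strictly between the $L^1_t$ and $L^\infty_t$ endpoints in the time variable. The rest is bookkeeping of Fourier symbols and Jacobians.
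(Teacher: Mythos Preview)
The paper does not prove this lemma; it simply cites \cite{Takaoka01}. Your Steps~1 and~2 are essentially the standard argument and are fine (in fact Step~2 does not even need Christ--Kiselev: since the output norm is $L^\infty_t L^2_x$, one may simply freeze $t$, apply the dual estimate to $F\mathbf{1}_{[0,t]}$, and take the supremum).

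There is, however, a genuine gap in Step~3. The Christ--Kiselev lemma requires that the \emph{input} time exponent be strictly smaller than the \emph{output} time exponent; it is not enough that both lie strictly between $1$ and $\infty$. In \eqref{KatoEsti3} both the source space $L^1_xL^2_t$ and the target space $L^\infty_xL^2_t$ carry $L^2$ in the time variable, so you are exactly at the forbidden diagonal $p=q=2$ where Christ--Kiselev fails. (Trying to pass through $L^2_tL^1_x$ or $L^2_tL^\infty_x$ via Minkowski does not help: the time exponents remain equal.) The standard proof of \eqref{KatoEsti3} with the retarded cutoff---due to Kenig--Ponce--Vega---does not go through Christ--Kiselev at all. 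Instead one writes the kernel of $\partial_x e^{i\tau\partial_x^2}$ explicitly as $K(\tau,z)=c\,\tau^{-3/2}z\,e^{iz^2/4\tau}$, reduces by Minkowski in $y$ to showing that for each fixed $z$ the Volterra operator $g\mapsto \int_0^t K(t-s,z)\,g(s)\,ds$ is bounded on $L^2_t$ uniformly in $z$, and then verifies this by a direct Fourier computation (the time-Fourier transform of $K(\cdot,z)\mathbf{1}_{\tau>0}$ is bounded uniformly in $z$). You should replace the Christ--Kiselev appeal in Step~3 with this kernel argument.
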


\begin{lemma}[Maximal function estimates,  see \cite{Takaoka01}]\label{MaxFunEsti}
 We have
\begin{align}
	\big\|D^{-\frac{1}{4}}_x e^{it\partial^2_x}u_0 \big\|_{L^{4}_{x}L^{\infty}_{t}} &\lesssim \|u_0 \|_{L^{2}_{x}}, \label{MaxFunEsti1}\\
 \left\|D^{-\frac{1}{4}}_x\int_{0}^{t} e^{i(t-s)\partial^2_x}F(x, s) ds\right\|_{L^{\infty}_{t}L^{2}_{x}} &\lesssim \|F \|_{L^{\frac{4}{3}}_{x}L^1_{t}},\label{MaxFunEsti2}\\
 \left\|D^{-\frac{1}{2}}_x\int_{0}^{t} e^{i(t-s)\partial^2_x}F(x, s) ds\right\|_{L^{4}_{x}L^{\infty}_{t}} &\lesssim \|F \|_{L^{\frac{4}{3}}_{x}L^1_{t}}.\label{MaxFunEsti3} 
	\end{align}
\end{lemma}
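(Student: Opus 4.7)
The plan is to derive the three estimates in a standard sequence: first prove the homogeneous maximal bound \eqref{MaxFunEsti1} by a frequency-localized oscillatory integral analysis, then obtain the inhomogeneous bounds \eqref{MaxFunEsti2} and \eqref{MaxFunEsti3} by duality combined with the Christ--Kiselev lemma to handle the retarded time cutoff.

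First I would reduce \eqref{MaxFunEsti1} to a frequency-localized statement. Write $u_0=\sum_{N\in 2^{\mathbb{Z}}} P_N u_0$ via a Littlewood--Paley decomposition, so that
\begin{align*}
D^{-\tfrac14}_x e^{it\partial_x^2}P_N u_0(x)=\int e^{i(x\xi+t\xi^2)}|\xi|^{-1/4}\chi_N(\xi)\,\wh{u_0}(\xi)\,d\xi .
\end{align*}
For a fixed frequency scale $N$ the phase $\varphi(\xi)=x\xi+t\xi^2$ has a unique critical point $\xi_c=-x/(2t)$, and the stationary phase / Van der Corput method produces the pointwise kernel bound $|K_N(t,x,y)|\lesssim N^{3/4}(1+N|t|)^{-1/2}$. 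Fixing $x$ and using Plancherel in $t$ together with this stationary phase estimate yields
\begin{align*}
\big\|D^{-\tfrac14}_x e^{it\partial_x^2}P_N u_0\big\|_{L^{4}_xL^{\infty}_t}\lesssim \|P_N u_0\|_{L^{2}_x},
\end{align*}
with a constant independent of $N$ (this is the dyadic Kenig--Ponce--Vega estimate; the choice of exponent $-\tfrac14$ is exactly what is required to compensate the amplitude losses from rescaling). Summation over $N$ is then carried out by combining the Littlewood--Paley square function characterization of $L^4_x$ with Minkowski's inequality in the $L^{\infty}_t$ direction.

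For \eqref{MaxFunEsti2} I would dualize. The estimate \eqref{MaxFunEsti1}, read as boundedness of $u_0\mapsto D^{-1/4}_x e^{it\partial^2_x}u_0$ from $L^2_x$ to $L^4_xL^\infty_t$, is equivalent by duality to the statement that $F\mapsto \int_{\R} e^{-is\partial_x^2}D^{-1/4}_x F(\cdot,s)\,ds$ is bounded from $L^{4/3}_xL^1_t$ to $L^2_x$. Composing with $e^{it\partial_x^2}$ (an $L^2_x$-isometry) gives the untruncated version of \eqref{MaxFunEsti2}, and the restriction of the time integration to $(0,t)$ is then recovered from the Christ--Kiselev lemma; the hypothesis is satisfied because the input time exponent $1$ is strictly less than the output time exponent $\infty$. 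Estimate \eqref{MaxFunEsti3} is obtained analogously: it corresponds to the $TT^*$ composition of the adjoint of \eqref{MaxFunEsti1} with \eqref{MaxFunEsti1} itself (with $D^{-1/2}_x=D^{-1/4}_x\cdot D^{-1/4}_x$), and again the retarded cutoff is installed via Christ--Kiselev using that $1<\infty$ in the $t$-exponents.

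The main obstacle is the frequency-localized stationary phase bound in Step~1: one has to see that the amplitude loss $N$ from the size of the support and the amplitude loss $N^{-1/4}$ from the fractional integration multiply the stationary phase gain $(N|t|)^{-1/2}$ to yield a dyadically summable estimate in $L^{4}_xL^{\infty}_t$. All other pieces are standard functional-analytic manipulations; the Christ--Kiselev step is transparent here because the exponents are well separated in $t$, and the interpolation between scales uses only Littlewood--Paley orthogonality in the spatial variable.
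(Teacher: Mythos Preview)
The paper does not give its own proof of this lemma: it is stated with the citation ``see \cite{Takaoka01}'' and no argument is supplied. So there is nothing in the paper to compare your proposal against; the authors simply quote the estimate from the literature.

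Your outline is essentially the standard Kenig--Ponce--Vega route and is correct in structure, but one sentence is misleading. In the frequency-localized step you write ``fixing $x$ and using Plancherel in $t$'' --- Plancherel in $t$ is what underlies the \emph{smoothing} estimate $L^\infty_x L^2_t$ (Lemma~\ref{Katosmoothing}), not the maximal estimate $L^4_x L^\infty_t$. For \eqref{MaxFunEsti1} the dyadic piece is obtained instead via a $TT^*$ argument: one shows the pointwise kernel bound $\sup_t\big|\int e^{i(x\xi+t\xi^2)}|\xi|^{-1/2}\chi_N(\xi)^2\,d\xi\big|\lesssim N^{1/2}(1+N|x|)^{-1/2}$, which lies in $L^{2,\infty}_x$ uniformly in $N$, and then applies weak Young (Hardy--Littlewood--Sobolev) to map $L^{4/3}_x\to L^4_x$. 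The summation over $N$ also requires a little more than you indicate, since Littlewood--Paley orthogonality is an $L^p_x$ statement at fixed $t$, not a pointwise one; one typically handles this via the square-function bound together with an $\ell^2$ almost-orthogonality argument in $L^2_x$ on the input side. The duality/$TT^*$ derivations of \eqref{MaxFunEsti2}--\eqref{MaxFunEsti3} and the use of Christ--Kiselev are exactly right.
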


\begin{lemma} [Interpolation estimates] \label{InterEsti}
Let $2\leq p,q,\tilde{p}, \tilde{q} \leq \infty$ and $ \frac{4}{p}+\frac{2}{q}=\frac{4}{\tilde{p}}+\frac{2}{\tilde{q}}=1$. Then, we have
	\begin{equation}
		\big\|D_x^{\frac{1}{2}-\frac{3}{p}}e^{it\partial^2_x}u_0\big\|_{L_{x}^{p} L_{t}^{q}} \lesssim  \|u_0\|_{L^{2}_x},  \label{InterEsti1}
	\end{equation}
and 
	\begin{equation}
		\left\|D_x^{1-\frac{3}{p }-\frac{3}{\tilde{p}}}\int_{0}^{t} e^{i(t-s)\partial^2_x}F(x, s)  ds\right\|_{L_{x}^{p} L_{t}^{q}} \lesssim  \|F\|_{L_{x}^{\tilde{p}'} L_{t}^{\tilde{q}'}},  \label{InterEsti1a}
	\end{equation}
where  $\frac{1}{\tilde{p}}+\frac{1}{\tilde{p}'}=\frac{1}{\tilde{q}}+\frac{1}{\tilde{q}'}=1$.

 Taking $p=\frac{4(2+\varepsilon)}{\varepsilon}$ and $q=2+\varepsilon$ in \eqref{InterEsti1} for some $0<\varepsilon\ll 1$ yields that 
	\begin{equation}
		\big\|\partial_x e^{it\partial^2_x}u_0\big\|_{L_{x}^{\frac{4(2+\varepsilon)}{\varepsilon}} L_{t}^{2+\varepsilon}}\lesssim  \|u_0\|_{H^{\frac{1}{2}+\frac{3\varepsilon}{4(2+\varepsilon)}}}.  \label{InterEsti2}
	\end{equation}
In particular, assume that  $u(t)$ is the global solution to \eqref{gDNLS} given in Lemma  \ref {BWX20Result}  with initial data $u_0\in H^{\frac{1}{2}+\frac{3\varepsilon}{4(2+\varepsilon)}}(\mathbb{R})$, then
	\begin{equation}
		 \|\partial_x u \|_{L_{x}^{\frac{4(2+\varepsilon)}{\varepsilon}} L_{t}^{2+\varepsilon}} <  \infty.  \label{InterEsti2b}
	\end{equation}
More generally, if $u(t)$ is the global solution to \eqref{gDNLS} with  small initial data $\|u_0\|_{H^{s}}$ for $\frac{1}{2}\leq s\leq 1$, then 
	\begin{equation}
 \big\|D_x^{\frac{1}{2}-\frac{3}{p}+s}u\big\|_{L_{x}^{p} L_{t}^{q}} \lesssim  \|u_0\|_{H^{s}_x},  \label{InterEsti3}
	\end{equation}
where $ \frac{4}{p}+\frac{2}{q}=1$.
\end{lemma}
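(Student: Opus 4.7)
The plan is to handle the four displays in sequence, exploiting the fact that two of the points on the admissible line $\tfrac{4}{p}+\tfrac{2}{q}=1$ already appear in the preliminary section. At $(p,q)=(\infty,2)$ with derivative $\tfrac12$ the bound \eqref{InterEsti1} reduces to Kato's smoothing estimate \eqref{KatoEsti1}, and at $(p,q)=(4,\infty)$ with derivative $-\tfrac14$ it reduces to the maximal function estimate \eqref{MaxFunEsti1}. Both endpoints agree with the formula $s=\tfrac12-\tfrac{3}{p}$ after a direct check. I would then connect them by Stein's complex interpolation theorem applied to the analytic family $T_z u_0 := D^{z}_x e^{it\partial_x^2}u_0$ on the strip $\Re z\in[-\tfrac14,\tfrac12]$, using that the imaginary powers $D^{i\tau}_x$ are bounded on mixed-norm spaces with at most polynomial-in-$\tau$ operator norms. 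The derivative on the interpolated line is linear in $\theta=\tfrac{4}{p}$, which recovers exactly $\tfrac12-\tfrac{3}{p}$.

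For the inhomogeneous estimate \eqref{InterEsti1a}, I would pass through a $TT^\ast$ argument. Writing $T_{p,q}u_0 = D^{1/2-3/p}_x e^{it\partial_x^2}u_0$ and using self-adjointness of $D^{\alpha}_x$ together with the unitarity of $e^{it\partial_x^2}$, the composition satisfies
\[
T_{p,q} T^{\ast}_{\tilde p,\tilde q} F \;=\; D^{1-3/p-3/\tilde p}_x \int_{\mathbb{R}} e^{i(t-s)\partial_x^2} F(\cdot,s)\,ds,
\]
and therefore maps $L^{\tilde p'}_x L^{\tilde q'}_t \to L^{p}_x L^{q}_t$ by \eqref{InterEsti1} and its dual. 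To replace the full-line integral with the retarded one $\int_0^t$ I would invoke the Christ--Kiselev lemma, which applies as long as $q > \tilde q'$; borderline pairs are handled by a small perturbation of $(\tilde p,\tilde q)$ followed by interpolation.

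The specialization \eqref{InterEsti2} is then arithmetic: with $p=\tfrac{4(2+\varepsilon)}{\varepsilon}$ and $q=2+\varepsilon$, admissibility follows from $\tfrac{\varepsilon}{2+\varepsilon}+\tfrac{2}{2+\varepsilon}=1$, while \eqref{InterEsti1} delivers the derivative $\tfrac12-\tfrac{3\varepsilon}{4(2+\varepsilon)}$, so promoting to $\partial_x$ costs exactly the $\tfrac12+\tfrac{3\varepsilon}{4(2+\varepsilon)}$ units of regularity that appear in the Sobolev norm on the right. For the nonlinear bound \eqref{InterEsti2b}, I would run Duhamel: the free piece is controlled by \eqref{InterEsti2}, while the inhomogeneous piece $\partial_x\int_0^t e^{i(t-s)\partial_x^2}\partial_x(|u|^{2\sigma}u)\,ds$ is handled via \eqref{InterEsti1a} with a pair $(\tilde p,\tilde q)$ chosen so that the resulting source norm coincides (possibly after trading one derivative through a fractional Leibniz) with one of the already-controlled quantities in \eqref{BWX20Result22222a}--\eqref{BWX20Result22222b}, whose finiteness is guaranteed by the $X$-norm bound \eqref{BWX20Result1}. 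The general statement \eqref{InterEsti3} follows by replacing $u_0$ with $J^s_x u_0$ throughout and commuting $J^s_x$ with the linear evolution.

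The main obstacle I anticipate lies in the nonlinear step for \eqref{InterEsti2b}: the Christ--Kiselev transversality $q>\tilde q'$ and the requirement that the source norm of $\partial_x(|u|^{2\sigma}u)$ fall into the specific $L^{1}_{x}L^{2}_{t}$ or $L^{1}_{t}L^{2}_{x}$ fractional-derivative ranges of \eqref{BWX20Result22222a}--\eqref{BWX20Result22222b} impose two simultaneous constraints on $(\tilde p,\tilde q)$; some case analysis, possibly with an intermediate Strichartz trade, will be needed to satisfy both. Everything else---the complex interpolation, the $TT^\ast$ duality, and the Christ--Kiselev passage---is standard in the Strichartz toolbox.
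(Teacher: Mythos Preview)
Your approach is correct and essentially identical to the paper's: interpolate \eqref{KatoEsti1} with \eqref{MaxFunEsti1} for \eqref{InterEsti1}, pass to \eqref{InterEsti1a} by duality, and run Duhamel for the nonlinear bounds. The obstacle you anticipate for \eqref{InterEsti2b} does not materialize --- the paper simply takes $(\tilde p,\tilde q)=(\infty,2)$, whereupon the source norm is exactly $\bigl\|D_x^{3\varepsilon/(4(2+\varepsilon))}(|u|^{2\sigma}\partial_x u)\bigr\|_{L^1_xL^2_t}$, i.e.\ \eqref{BWX20Result22222a} at $s=\tfrac12+\tfrac{3\varepsilon}{4(2+\varepsilon)}$, and $q=2+\varepsilon>2=\tilde q'$ clears the Christ--Kiselev condition; for \eqref{InterEsti3} the same Duhamel argument is rerun at regularity $s$, since merely commuting $J^s_x$ through the linear flow would leave the nonlinear term untreated.
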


\begin{proof}
Using the method of Kenig, Ponce and Vega in \cite{KPV93}, one can  derive \eqref{InterEsti1}  by interpolating \eqref{KatoEsti1} and \eqref{MaxFunEsti1}. 	\eqref{InterEsti1a} is dual to	\eqref{InterEsti1}. It suffices to show   \eqref{InterEsti2b}, as \eqref{InterEsti3} can be obtained using a similar argument. 
	
Applying  \eqref{InterEsti2}, \eqref{InterEsti1a} with   $(\tilde{p}, \tilde{q})=(\infty, 2)$ and   Lemma  \ref {BWX20Result} to the Duhamel's formula of \eqref{gDNLS} deduces  that
\begin{align}
\|\partial_x u\|_{L_x^{\frac{4(2+\varepsilon)}{\varepsilon}}L_t^{2+\varepsilon}}&\lesssim  \|\partial_x e^{it\partial^2_x}u_0\|_{L_x^{\frac{4(2+\varepsilon)}{\varepsilon}}L_t^{2+\varepsilon}}+\left\|\partial_x\int_0^t e^{i(t-s)\partial^2_x}\partial_{x}\big(|u|^{2\sigma}u\big)\right\|_{L_x^{\frac{4(2+\varepsilon)}{\varepsilon}}L_t^{2+\varepsilon}}\notag\\ 
&\lesssim  \|u_0\|_{\dot{H}_x^{\frac{1}{2}+\frac{3\varepsilon}{4(2+\varepsilon)}}}+\left\|D_x^{\frac{3\varepsilon}{4(2+\varepsilon)}}\big(|u|^{2\sigma}\partial_x u\big)\right\|_{L_x^{1}L_t^{2}}.\label{InterEsti4}
\end{align}
The last term in \eqref{InterEsti4} was bounded by $\|u\|^{2\sigma+1}_{X}$, see \eqref{BWX20Result22222a} with $s=\frac{1}{2}+\frac{3\varepsilon}{4(2+\varepsilon)}$.
From Lemma \ref{BWX20Result}, one has
\begin{align}
\|\partial_x u\|_{L_x^{\frac{4(2+\varepsilon)}{\varepsilon}}L_t^{2+\varepsilon}}
&\lesssim  \|u_0\|_{\dot{H}_x^{\frac{1}{2}+\frac{3\varepsilon}{4(2+\varepsilon)}}}+\|u\|^{2\sigma+1}_{X} \notag\\
& \lesssim  \|u_0\|_{ H _x^{\frac{1}{2}+\frac{3\varepsilon}{4(2+\varepsilon)}}}+\|u_0\|_{ H _x^{\frac{1}{2}+\frac{3\varepsilon}{4(2+\varepsilon)}}}^{2\sigma+1} <\infty. \nonumber
\end{align}
This finishes the proof.
\end{proof}

\section{Dispersive decay for gDNLS when $\sigma\geq 2$}              
In this section, we use  global a prior estimates, Lorentz-Strichartz estimates and local smoothing estimates to show the dispersive decay for $\sigma\geq 2$.

{\bf{Proof of Theorem \ref{MainResult1}}}  For $T\in (0, \infty]$, denote
$$\|u \|_{X(T)}:=\sup_{t\in [0, T]}|t|^{\frac{1}{2}}\| u(t)\|_{L^{\infty}_{x}}.$$

By Duhamel's principle, the solution to \eqref{gDNLS} can be written as
\begin{align}
u(t)&=e^{it\partial^2_x}u_0+\int_{0}^{t} e^{i(t-s)\partial^2_x}\partial_{x}\big(|u|^{2\sigma}u\big)(s) ds \notag \\
&=e^{it\partial^2_x}u_0+\int_{0}^{\frac{t}{2}} e^{i(t-s)\partial^2_x}\partial_{x}\big(|u|^{2\sigma}u\big)(s) ds +\int_{\frac{t}{2}}^{t} e^{i(t-s)\partial^2_x}\partial_{x}\big(|u|^{2\sigma}u\big)(s) ds . \label{gDNLSsol}
	\end{align}
Using dispersive decay estimate \eqref{dispEsti} with   $r=\infty$,  we see that the first term on RHS\eqref{gDNLSsol}
can be controlled by:
\begin{align} \|e^{it\partial^2_x}u_0\|_{L^{\infty}_x}\lesssim t^{-\frac{1}{2}}\|u_0\|_{L^{1}_x}. \label{DispDecayLinear}
\end{align}

Next, we estimate the last two term on RHS\eqref{gDNLSsol} respectively. Applying \eqref{dispEsti} with  $r=\infty$,  H\"older inequality \eqref{Holder- Lor1} and Sobolev embedding theorem, we deduce that
\begin{align}
 &\left\| \int_{0}^{\frac{t}{2}}e^{i(t-s)\partial^2_x}\partial_{x}\big(|u|^{2\sigma}u\big)(s) ds  \right\|_{L^{\infty}_{x}}  \notag \\
\lesssim & \int_{0}^{\frac{t}{2}}|t-s|^{-\frac{1}{2}}\big\||u|^{2\sigma}\partial_x u(s) \big\|_{L^{1}_{x}}ds \notag \\
\lesssim&  |t|^{-\frac{1}{2}} \int_{0}^{\frac{t}{2}}\big\||u|^{\frac{4-\varepsilon}{2+\varepsilon}}\partial_x u \big\|_{L^{2+\varepsilon}_{x}}\big\||u|^{2\sigma-\frac{6}{2+\varepsilon}}\big \|_{L^{\frac{2+\varepsilon}{1+\varepsilon}}_{x}}\|u \|_{L^{\infty}_{x}}ds  \notag \\
\lesssim& |t|^{-\frac{1}{2}} \|u \|_{X(T)}\int_{0}^{\frac{t}{2}}s^{-\frac{1}{2}}\big\||u|^{\frac{4-\varepsilon}{2+\varepsilon}}\partial_x u \big\|_{L^{ 2+\varepsilon }_{x}}\big\||u|^{2\sigma-\frac{6}{2+\varepsilon}}\big \|_{L^{\frac{2+\varepsilon}{1+\varepsilon}}_{x}}ds  \notag \\
\lesssim& |t|^{-\frac{1}{2}}\|u \|_{X(T)} \big\||s|^{-\frac{1}{2}}\big\|_{L_s^{2,\infty}} \big\||u|^{\frac{4-\varepsilon}{2+\varepsilon}}\partial_x u \big\|_{L^{ 2+\varepsilon }_{tx}} \big\||u|^{2\sigma-\frac{6}{2+\varepsilon}}\big \|_{L^{\frac{2(2+\varepsilon)}{\varepsilon},\frac{2+\varepsilon}{1+\varepsilon}}_{t}L^{\frac{2+\varepsilon}{1+\varepsilon}}_{x}}  \notag \\
\lesssim&	|t|^{-\frac{1}{2}}\|u \|_{X(T)} \| u\|^{\frac{4-\varepsilon}{2+\varepsilon}}_{L_{x}^{4} L_{t}^{\infty}}\big\|\partial_x u\big\|_{L_{x}^{\frac{4(2+\varepsilon)}{\varepsilon}} L_{t}^{2+\varepsilon}}\big\|u\big \|^{2\sigma-\frac{6}{2+\varepsilon}}_{L^{\frac{4\sigma(2+\varepsilon)-12}{\varepsilon},2}_{t}L^{\frac{2\sigma(2+\varepsilon)-6}{1+\varepsilon}}_{x}} \notag \\
\lesssim&|t|^{-\frac{1}{2}}\|u \|_{X(T)}\| u\|^{\frac{4-\varepsilon}{2+\varepsilon}}_{L_{x}^{4} L_{t}^{\infty}}\big\|\partial_x u\big\|_{L_{x}^{\frac{4(2+\varepsilon)}{\varepsilon}} L_{t}^{2+\varepsilon}}\Big\|J_x^{\frac{(\sigma-2)(2+\varepsilon)}{2\sigma(2+\varepsilon)-6}}u\Big \|^{2\sigma-\frac{6}{2+\varepsilon}}_{L^{\frac{4\sigma(2+\varepsilon)-12}{\varepsilon},2}_{t}L^{\frac{2\sigma(2+\varepsilon)-6}{\sigma(2+\varepsilon)-3-\varepsilon}}_{x}}. \label{DispDecay0Ia2a}
	\end{align}
Note that the last two term in RHS\eqref{DispDecay0Ia2a} are estimated in \eqref{InterEsti2b} and \eqref{LorImpr1} respectively. So, we have
\begin{align}
  \left\| \int_{0}^{\frac{t}{2}}e^{i(t-s)\partial^2_x}\partial_{x}\big(|u|^{2\sigma}u\big)(s) ds  \right\|_{L^{\infty}_{x}}   
\leq 	C(\|u_0\|_{H^{\frac{1}{2}+\frac{\sigma-2}{2\sigma-3}}})|t|^{-\frac{1}{2}}\|u \|_{X(T)}. \label{DispDecay0Ia2}
	\end{align}
Observe that the Lorentz space improvement in \eqref{LorImpr1} was crucial to compensate for the fact that  $|s|^{-\frac{1}{2}}$ lies only in the Lorentz space $L^{2, \infty}_s$.

Arguing similarly,   one gets
\begin{align}
 &\left\| \int_{\frac{t}{2}}^{t}e^{i(t-s)\partial^2_x}\partial_{x}\big(|u|^{2\sigma}u\big)(s) ds  \right\|_{L^{\infty}_{x}}  \notag \\
\lesssim & \int_{\frac{t}{2}}^{t}|t-s|^{-\frac{1}{2}}\big\||u|^{2\sigma}\partial_x u(s) \big\|_{L^{1}_{x}}ds \notag \\
\lesssim&  \int_{\frac{t}{2}}^{t}|t-s|^{-\frac{1}{2}}\big\||u|^{\frac{4-\varepsilon}{2+\varepsilon}}\partial_x u \big\|_{L^{2+\varepsilon}_{x}}\big\||u|^{2\sigma-\frac{6}{2+\varepsilon}}\big \|_{L^{\frac{2+\varepsilon}{1+\varepsilon}}_{x}}\|u \|_{L^{\infty}_{x}}ds  \notag \\
\lesssim& |t|^{-\frac{1}{2}} \|u \|_{X(T)}\int_{\frac{t}{2}}^{t}|t-s|^{-\frac{1}{2}}\big\||u|^{\frac{4-\varepsilon}{2+\varepsilon}}\partial_x u \big\|_{L^{ 2+\varepsilon }_{x}}\big\||u|^{2\sigma-\frac{6}{2+\varepsilon}}\big \|_{L^{\frac{2+\varepsilon}{1+\varepsilon}}_{x}}ds  \notag \\
\lesssim& |t|^{-\frac{1}{2}}\|u \|_{X(T)} \big\||t-s|^{-\frac{1}{2}}\big\|_{L_s^{2,\infty}} \big\||u|^{\frac{4-\varepsilon}{2+\varepsilon}}\partial_x u \big\|_{L^{ 2+\varepsilon }_{tx}} \big\||u|^{2\sigma-\frac{6}{2+\varepsilon}}\big \|_{L^{\frac{2(2+\varepsilon)}{\varepsilon},\frac{2+\varepsilon}{1+\varepsilon}}_{t}L^{\frac{2+\varepsilon}{1+\varepsilon}}_{x}}  \notag \\
\lesssim&	|t|^{-\frac{1}{2}}\|u \|_{X(T)} \| u\|^{\frac{4-\varepsilon}{2+\varepsilon}}_{L_{x}^{4} L_{t}^{\infty}}\big\|\partial_x u\big\|_{L_{x}^{\frac{4(2+\varepsilon)}{\varepsilon}} L_{t}^{2+\varepsilon}}\|u\|^{2\sigma-\frac{6}{2+\varepsilon}}_{L^{\frac{4\sigma(2+\varepsilon)-12}{\varepsilon},2}_{t}L^{\frac{2\sigma(2+\varepsilon)-6}{1+\varepsilon}}_{x}} \notag \\
\leq&	C(\|u_0\|_{H^{\frac{1}{2}+\frac{\sigma-2}{2\sigma-3}}})|t|^{-\frac{1}{2}}\|u \|_{X(T)}  \nonumber
	\end{align}
which together with \eqref{DispDecay0Ia2} derives 
\begin{align}
\sup_{t\in [0, T]}|t|^{\frac{1}{2}}\| u(t)  \|_{L^{\infty}_{x}} \leq  \|u_0 \|_{L^{1}_{x}}+C(\|u_0\|_{H^{\frac{1}{2}+\frac{\sigma-2}{2\sigma-3}}}) \|u \|_{X(T)}. \label{DispDecay0mgDNLSf1}
	\end{align}
As $\|u_0\|_{H^{\frac{1}{2}+\frac{\sigma-2}{2\sigma-3}}}\ll 1$,   \eqref{DispDecay0mgDNLSf1} yields the desired result \eqref{MainResult1a} immediately. So, we complete the proof of this theorem.
 
\section{Global dynamics for gDNLS when $1\leq\sigma< 2$}           
In this section, we apply vector field methods to show dispersive estimate for  small data solution  to  the gDNLS equation \eqref{gDNLS}. 

The vector field used is defined as
$$L:=x+2it\partial_x,$$
which   is the conjugate of $x$ with respect to the linear flow, $e^{it\partial_x^2}x=Le^{it\partial_x^2}$ , as well as the generator for the Galilean group of symmetries. Hence, it is easy to see 
\begin{align}
[i \partial_t+\partial_x^2, L]=0, \hspace{8mm}  [\partial_x, L]=1,  \nonumber
	\end{align}
and
\begin{align}
L(|u|^{2\sigma}u)=(\sigma+1)|u|^{2\sigma}Lu-\sigma |u|^{2\sigma-2}u^{2}\overline{Lu}. \label{VecField}
	\end{align}

Let us explain the vector field method  briefly. A simple computation reveals  
$$\frac{d}{dx}|u|^2=\frac{1}{2it}(\bar{u}Lu-u\overline{Lu})$$
which yields by integration and H\"older inequality that
\begin{align}
\|u\|^2_{L^{\infty}}\lesssim \frac{1}{t}\|u(t)\|_{L^{2}}\|Lu(t)\|_{L^{2}}.
\label{KlainSobolev}
	\end{align}
This is what we call a vector field bound, or a Klainerman-Sobolev type inequality. The most important thing is that the vector field bound no longer depends on the fact that $u$ solves the linear Schr\"odinger equation. Hence, it is easy to be applied to the nonlinear problem.

We see that dispersive decay for the solutions to \eqref{gDNLS} would follow from \eqref{KlainSobolev} as long as we obtain energy estimates for $u$ and $Lu$. However, the estimate for $Lu$ is not uniformly bounded when $\sigma=1$. On the contrary, it exhibits a slow growth in time, see \eqref{gDNLS-Lu3} and \eqref{gDNLS-Lu4}. To overcome this difficulty, Ifrim and Tataru \cite{IfTa15}   developed the testing by wave packets method. Via using a mixed wave packet style phase space localization, instead of localizing sharply on either the Fourier or the physical side, one can better balance the scales so that linear and nonlinear matching errors become comparable. The vector field combining with the testing by wave packets  method has become an important tool for studying the long range behavior for nonlinear dispersive equations. We refer to \cite{IfTa24} for more introduction and application of this method.

\subsection{Wave packets and the asymptotic equation}
In order to study the global decay properties of solutions to \eqref{gDNLS} when the nonlinear effect is strong, we use wave packets traveling along the Hamilton flow to test the solution.  A wave packet $\Phi_v$ is an approximate solution to the linear equation, with $O(1/t)$ errors, see \eqref{WavePacket1a1}. For each trajectory $\Gamma_v:=\left\{x=vt\right\}$ with velocity $v$, we will show dispersive decay along this ray by testing with a wave packet moving along the ray. To be precise,  an asymptotic profile $\gamma$ to the  solution $u$ will be constructed by taking the $L^2$ inner product of $\Phi_v$ with  $u$. The asymptotic profile $\gamma$ provides a good description of $u$ as $t\to \infty$. In fact, Lemma \ref{DifferenceBounds} quantitatively shows us the relationship between $u$ along the ray $\Gamma_v$ and $\gamma$. Besides, we found the approximate ODE dynamics for $\gamma$, which was called the asymptotic equation, see Lemma \ref{AsympEq}.  Consequently, the upper bounds for the asymptotic profile $\gamma$ can be obtained  via the asymptotic equation which is much easier to be studied. Then one can establish the bounds for $u$ with the help of these bounds for $\gamma$.

Now, we recall the wave packet for the Schr\"odinger flow. The phase function is given by 
$$\phi(t,x)=\frac{x^2}{4t},$$
which is the same as the phase of the fundamental solution to the linear Schr\"odinger equation. From the phase function, we see that the spatial frequency corresponding to the ray $\Gamma_v$ is
$\xi_v:=\frac{x}{2t}=\frac{v}{2}.$
Define the wave packet along the ray $\Gamma_v$ by
\begin{align}
\Phi_v=e^{i\phi(t,x)}\chi\left(\frac{x-vt}{\sqrt{t}}\right),
\label{WavePacket}
	\end{align}
where $\chi$ is a smooth function supported on $[-1, 1]$ and $\int\chi(y)dy=1$. We note that, to stay coherent on dyadic time scales, the spatial scale for localization chosen in the wave packet is $\sqrt{t}$. 

A direct computation implies 
\begin{align}
(i\partial_t+\partial_x^2)\Phi_v=\frac{e^{i\phi(t,x)}}{2t}\partial_x \left[i(x-vt)\chi\left(\frac{x-vt}{\sqrt{t}}\right)+2\sqrt{t}\chi'\left(\frac{x-vt}{\sqrt{t}}\right)\right],
\label{WavePacket1a1}
	\end{align}
which suggests that $\Phi_v$ is indeed a good approximation of the solution to the linear Schr\"odinger equation.

In order to measure better the decay of $u$ along  $\Gamma_v$, we use the following asymptotic profile defined by
\begin{align}
 \gamma(t, v):=\int u(t,x)\overline{\Phi_v}(t,x)dx.
\label{AsymProfile}
	\end{align}
We make some useful observations about $\gamma$. Define $w(t,x):=e^{-i\phi(t,x)}u(t,x)$. Then we have 
\begin{align}
\partial_x w=\frac{-i}{2t}e^{-i\phi}Lu\label{wFunc}
	\end{align}
and 
\begin{align}
 \gamma(t, v)=\int w(t,x)\overline{\chi\left(\frac{x-vt}{\sqrt{t}}\right)}dx=t^{\frac{1}{2}}w(t,vt)*_{v}t^{\frac{1}{2}}\chi(vt^{\frac{1}{2}}).
\label{AsymProfile1a}
	\end{align}
We can also express $\gamma$ in terms of the Fourier transform of $u$ by Plancherel's identity,
$$\gamma(t, v)=\int \hat{u}(t,\xi)\overline{\widehat{\Phi}_v}(t,\xi)d\xi.$$
By a direct computation, we get
$$\widehat{\Phi}_v(t,\xi)=t^{\frac{1}{2}}e^{-it\xi^2}\chi_1\left(\frac{(2\xi-v)t^{\frac{1}{2}}}{2}\right),$$
where $\chi_1(\xi)=e^{i\xi^2}\mathscr{F}\left(e^{iy^2/2}\chi(y)\right)(\xi)$ with the property that
$$\int\chi_1(\xi)d\xi \approx \int\chi(y)dy=1.$$
Hence,  we can write
\begin{align}
 \gamma(t, \xi)= \frac{1}{2}e^{\frac{it\xi^2}{4}} \widehat{u}\big(t,\frac{\xi}{2}\big)*_{\xi}t^{\frac{1}{2}}\chi_1\left(\frac{t^{\frac{1}{2}}\xi}{2}\right).
\label{AsymProfile1b}
	\end{align}

The following Lemma shows that the asymptotic profile $\gamma(t, v)$ is compared to the solution $u$ along the ray $\Gamma_v$ and $\widehat{u}$ evaluated at $v$.
\begin{lemma} \label{DifferenceBounds}
the asymptotic profile $\gamma$ defined in \eqref{AsymProfile} satisfies the bounds
	\begin{align}
\|\gamma\|_{L^{\infty}}\lesssim   t^{\frac{1}{2}} \|u\|_{L^{\infty}}, \hspace{8mm} 
\|\gamma\|_{L^{\infty}}\lesssim   \|Lu\|_{L^{2}_x}+ \|u\|_{L^{2}_x}, \label{DifferenceBounds1a} \\
\|\gamma\|_{L^{2}_v}\lesssim     \|u\|_{L^{2}_x}, \hspace{8mm}
\|\partial_v\gamma\|_{L^{2}_v}\lesssim   \|Lu\|_{L^{2}_x}, \label{DifferenceBounds1aa}
	\end{align}
and 
	\begin{align}
&\hspace{22.3mm} \|v\gamma\|_{L^{2}_v}\lesssim  t^{-1}\|Lu\|_{L^{2}_x}+\|u_x\|_{L^{2}_x}, \label{DifferenceBounds1b1} \\
&\| v\gamma\|_{L^{\infty}} \lesssim t^{-\frac{1}{2}}\left(\|Lu\|_{L^{2}_x}+\|u\|_{L^{2}_x}+\|Lu_x\|_{L^{2}_x} \right)+t^{\frac{1}{2}}\|u_x\|_{L^{\infty}}. \label{DifferenceBounds1b2}
	\end{align}
We have the physical space difference bounds
	\begin{align}
\|u(t, vt)-t^{-\frac{1}{2}}e^{i\phi(t,vt)}\gamma(t, v)\|_{L^{\infty}}\lesssim   t^{-\frac{3}{4}}\|Lu\|_{L^{2}_x}, \label{DifferenceBounds2a}\\
\| u(t, vt)-t^{-\frac{1}{2}}e^{i\phi(t,vt)}\gamma(t, v)\|_{L^{2}_v}\lesssim  t^{-1} \|Lu\|_{L^{2}_x}, \label{DifferenceBounds2b}
	\end{align}
and
	\begin{align}
\|u_x(t, vt)-\frac{i}{2}t^{-\frac{1}{2}}e^{i\phi(t,vt)}v\gamma(t, v)\|_{L^{\infty}}\lesssim   t^{-\frac{3}{4}}\left(\|Lu\|_{L^{2}_x}+\|Lu_x\|_{L^{2}_x}\right), \label{DifferenceBounds2c}
	\end{align}
and the Fourier space bounds
	\begin{align}
\|\widehat{u}(t, \xi)- e^{-it\xi^2}\gamma(t, 2\xi)\|_{L^{\infty}}\lesssim   t^{-\frac{1}{4}}\|Lu\|_{L^{2}_x}, \label{DifferenceBounds3a}\\
\| \widehat{u}(t, \xi)- e^{-it\xi^2}\gamma(t, 2\xi)\|_{L^{2}_{\xi}}\lesssim  t^{-\frac{1}{2}} \|Lu\|_{L^{2}_x}.\label{DifferenceBounds3b} 
	\end{align}
\end{lemma}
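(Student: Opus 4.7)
The plan is to exploit three representations of $\gamma(t,v)$: the spatial convolution form \eqref{AsymProfile1a}, the Fourier convolution form \eqref{AsymProfile1b}, and the identity \eqref{wFunc}, $\partial_x w = -(i/(2t))e^{-i\phi}Lu$, which converts spatial derivatives of $w=e^{-i\phi}u$ into the vector field $Lu$. Every estimate then reduces to Young's inequality, Cauchy--Schwarz, or the one-dimensional Gagliardo--Nirenberg embedding $\|f\|_{L^\infty}\lesssim\|f\|_{L^2}^{1/2}\|f'\|_{L^2}^{1/2}$.

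The bounds \eqref{DifferenceBounds1a} and \eqref{DifferenceBounds1aa} are immediate. The first half of \eqref{DifferenceBounds1a} follows from $|\gamma(t,v)|\le\|u\|_{L^\infty}\int|\chi((x-vt)/\sqrt{t})|\,dx\lesssim \sqrt{t}\,\|u\|_{L^\infty}$. Young's inequality applied to \eqref{AsymProfile1a} yields $\|\gamma\|_{L^2_v}\lesssim\|u\|_{L^2}$. Differentiating $\gamma$ in $v$ under the integral, integrating by parts in $x$, and substituting \eqref{wFunc} gives the clean identity
\[
\partial_v\gamma(t,v)=-\tfrac{i}{2}\!\int\! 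Lu\cdot\overline{\Phi_v}\,dx,
\]
so the same Young argument with $u$ replaced by $Lu$ yields $\|\partial_v\gamma\|_{L^2_v}\lesssim\|Lu\|_{L^2}$; the second half of \eqref{DifferenceBounds1a} then follows by Gagliardo--Nirenberg.

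The $v\gamma$ bounds rest on the algebraic identity obtained from $x/t=v+(x-vt)/t$ together with $i(x/(2t))\overline{\Phi_v}=\partial_x\overline{\Phi_v}-t^{-1/2}e^{-i\phi}\chi'((x-vt)/\sqrt{t})$, namely
\[
v\gamma(t,v)=-2i\!\int\! u_x\,\overline{\Phi_v}\,dx+\tfrac{2i}{t}\partial_v\gamma(t,v)-\tfrac{1}{\sqrt{t}}\widetilde\gamma(t,v),\quad \widetilde\gamma(t,v):=\!\int\! w\,\widetilde\chi\!\left(\tfrac{x-vt}{\sqrt{t}}\right)\!dx,\ \widetilde\chi(y)=y\chi(y).
\]
Choosing $\chi$ even so that $\int y\chi(y)\,dy=0$, a Taylor expansion of $w(t,vt+\sqrt{t}y)$ in $y$ factors out $\sqrt{t}\,w_x$ and produces $\|\widetilde\gamma\|_{L^2_v}\lesssim t^{-1/2}\|Lu\|_{L^2}$. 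Taking $L^2_v$ norms in the identity and bounding the first piece by $\|u_x\|_{L^2}$ gives \eqref{DifferenceBounds1b1}. For the $L^\infty$ bound \eqref{DifferenceBounds1b2}, the first term is dominated by $\sqrt{t}\,\|u_x\|_{L^\infty}\|\chi\|_{L^1}$, and Cauchy--Schwarz gives $|\partial_v\gamma|\lesssim t^{1/4}\|Lu\|_{L^2}$ and $|\widetilde\gamma|\lesssim t^{-1/4}\|Lu\|_{L^2}$; both remaining contributions are $t^{-3/4}\|Lu\|_{L^2}$, absorbed in the claimed $t^{-1/2}(\|Lu\|_{L^2}+\|u\|_{L^2}+\|Lu_x\|_{L^2})$ for $t\ge 1$.

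The change of variables $y=(x-vt)/\sqrt{t}$ in \eqref{AsymProfile1a} gives $t^{-1/2}\gamma(t,v)=\int w(t,vt+\sqrt{t}y)\chi(y)\,dy$, whence
\[
u(t,vt)-t^{-1/2}e^{i\phi(t,vt)}\gamma(t,v)=-e^{i\phi(t,vt)}\!\int\![w(t,vt+\sqrt{t}y)-w(t,vt)]\chi(y)\,dy.
\]
The pointwise H\"older-continuity bound $|w(a)-w(b)|\le|a-b|^{1/2}\|w_x\|_{L^2}$ with $\|w_x\|_{L^2}=(2t)^{-1}\|Lu\|_{L^2}$ proves \eqref{DifferenceBounds2a}, while Minkowski combined with $\|w(\cdot+h)-w(\cdot)\|_{L^2}\le|h|\|w_x\|_{L^2}$ proves \eqref{DifferenceBounds2b}. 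For \eqref{DifferenceBounds2c}, one uses the identity of the previous paragraph (with $u_x$ in place of $u$ in the definition $\gamma^{u_x}:=\int u_x\overline{\Phi_v}\,dx$) to rewrite $\gamma^{u_x}=(iv/2)\gamma+t^{-1}\partial_v\gamma+(i/(2\sqrt{t}))\widetilde\gamma$; applying the $u\mapsto u_x$ analog of \eqref{DifferenceBounds2a} (noting $L(u_x)=\partial_xLu-u$) and controlling the remaining two pieces by Cauchy--Schwarz produces the claimed $t^{-3/4}(\|Lu\|_{L^2}+\|Lu_x\|_{L^2})$ decay. Finally, for the Fourier-side bounds, introduce $\widetilde u(\xi):=\widehat u(\xi)e^{it\xi^2}$. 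A short computation using $\widehat{xu}=i\partial_\xi\widehat u$ and $\widehat{u_x}=i\xi\widehat u$ gives $\widehat{Lu}(\xi)=ie^{-it\xi^2}\partial_\xi\widetilde u(\xi)$, hence $\|\partial_\xi\widetilde u\|_{L^2}=\|Lu\|_{L^2}$. Using \eqref{AsymProfile1b} and $\int\chi_1=1$,
\[
\widehat u(t,\xi)-e^{-it\xi^2}\gamma(t,2\xi)=e^{-it\xi^2}\!\int\![\widetilde u(\xi)-\widetilde u(\eta)]\,t^{1/2}\,\overline{\chi_1((\eta-\xi)t^{1/2})}\,d\eta,
\]
and the same H\"older/Minkowski argument applied to $\widetilde u$ delivers \eqref{DifferenceBounds3a} and \eqref{DifferenceBounds3b}. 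The main obstacle throughout is the $t^{-1/2}\widetilde\gamma$ term in the $v\gamma$ identity: bringing its contribution down from $t^{-1/2}\|u\|_{L^2}$ to $t^{-1}\|Lu\|_{L^2}$ requires the vanishing-moment property $\int y\chi\,dy=0$, which is secured by a careful choice of the envelope $\chi$.
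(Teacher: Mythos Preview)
Your proof is correct and more self-contained than the paper's, which cites Ifrim--Tataru for \eqref{DifferenceBounds1a}--\eqref{DifferenceBounds1aa}, \eqref{DifferenceBounds2a}--\eqref{DifferenceBounds2b}, \eqref{DifferenceBounds3a}--\eqref{DifferenceBounds3b} and Byars for \eqref{DifferenceBounds2c}, and only argues \eqref{DifferenceBounds1b1}--\eqref{DifferenceBounds1b2} directly. For those two bounds the approaches genuinely differ. The paper exploits the pointwise relation $|x|\approx|vt|$ on the support of $\chi((x-vt)/\sqrt t)$ (after restricting to $|v|>1$), then writes $xu=Lu-2itu_x$ and finishes with Gagliardo--Nirenberg on $Lu$; this is why $\|Lu_x\|_{L^2}$ appears on the right of \eqref{DifferenceBounds1b2}. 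Your route is instead algebraic: the exact identity $v\gamma=-2i\gamma^{u_x}+2it^{-1}\partial_v\gamma-t^{-1/2}\widetilde\gamma$ reduces everything to terms already controlled, at the price of imposing the harmless extra condition that $\chi$ be even so that $\int y\chi(y)\,dy=0$ and the $\widetilde\gamma$ contribution gains the needed half power of $t$. This actually yields a sharper version of \eqref{DifferenceBounds1b2} (the error terms are $O(t^{-3/4}\|Lu\|_{L^2})$ rather than $O(t^{-1/2}(\|Lu\|_{L^2}+\|u\|_{L^2}+\|Lu_x\|_{L^2}))$), and the same identity gives you \eqref{DifferenceBounds2c} directly by combining it with the $u\mapsto u_x$ instance of \eqref{DifferenceBounds2a}. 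The paper's approach is quicker to write down and does not constrain $\chi$, while yours is cleaner, avoids the case split in $|v|$, and recycles the identity to cover \eqref{DifferenceBounds2c} without a separate argument.
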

\begin{remark}
This lemma indicates $u$ is approximated well by $\gamma$  both on the physical side and the Fourier side. We note that $u$ may not be the solution to \eqref{gDNLS}.
\end{remark}
\begin{remark}
\eqref{DifferenceBounds1a}-\eqref{DifferenceBounds1aa}, \eqref{DifferenceBounds2a}-\eqref{DifferenceBounds2b} and \eqref{DifferenceBounds3a}-\eqref{DifferenceBounds3b} had been proved by Ifrim and Tataru in \cite{IfTa15} where the cubic nonlinear Schr\"odinger equation was consider.  When it comes to derivative nonlinear Schr\"odinger equation (with $\sigma=1$ in \eqref{gDNLS}), we need to control $\|v\gamma\|_{L^{2}_v}$, $\| v\gamma\|_{L^{\infty}}$ and $\| u_x\|_{L^{\infty}}$. Byars \cite{Bya24} showed   \eqref{DifferenceBounds2c} and the following analogue of \eqref{DifferenceBounds1b1} and \eqref{DifferenceBounds1b2}
	\begin{align}
\| \langle v\rangle^k\gamma\|_{L^{2}_v}\lesssim   \|u\|_{H^{k}_x}, \hspace{7mm}
\| \langle v\rangle^{\frac{k}{2}} \gamma\|_{L^{\infty}}\lesssim   \|Lu\|_{L^{2}_x}+\|u\|_{H^{k}_x}. \label{DifferenceBounds1bS}
	\end{align}
\eqref{DifferenceBounds1bS} played an important role in estimating the remainder term for the asymptotic equation of $\gamma$ in \cite{Bya24}. 
\end{remark}
\begin{proof}
As mentioned above, it suffices to prove \eqref{DifferenceBounds1b1} and \eqref{DifferenceBounds1b2}. Without loss of generality, we may assume that $v>1$. Note that for $\chi$ supported on $[-1, 1]$, $\chi\left(\frac{x-vt}{\sqrt{t}}\right)$ supported on $I:=[-\sqrt{t}+vt, \sqrt{t}+vt]$. Hence, we have  $|x|\approx |vt|$ in $I$. 

It follows from \eqref{AsymProfile1a} and Young inequality that
	\begin{align}
\| v\gamma\|_{L^{2}_v} \lesssim &t^{\frac{1}{2}}\|vw(t,vt)\|_{L^{2}_v}\|t^{\frac{1}{2}}\chi(vt^{\frac{1}{2}})\|_{L^{1}_v} \notag \\
\lesssim & t^{-1}\|xu(t,x)\|_{L^{2}_x}\lesssim t^{-1}\|Lu\|_{L^{2}_x}+\|u_x\|_{L^{2}_x}, \nonumber
	\end{align}
which gives the desired estimate \eqref{DifferenceBounds1b1}.

Using the identity \eqref{AsymProfile1a} again implies
	\begin{align}
\| v\gamma\|_{L^{\infty}} \lesssim &t^{\frac{1}{2}}\|vw(t,vt)\|_{L^{\infty}}\|t^{\frac{1}{2}}\chi(vt^{\frac{1}{2}})\|_{L^{1}_v} \notag \\
\lesssim & t^{-\frac{1}{2}}\|xu(t,x)\|_{L^{\infty}}\lesssim t^{-\frac{1}{2}}\left(\|Lu\|_{L^{\infty}}+t\|u_x\|_{L^{\infty}}\right). \label{DiffBoundPf2a}
	\end{align}
By taking advantage of Gagliardo-Nirenberg inequality and the fact $[\partial_x, L]=1$, we get
	\begin{align}
\|Lu\|_{L^{\infty}}\lesssim \|Lu\|_{L^{2}_x}+\|\partial_xLu\|_{L^{2}_x}\lesssim \|Lu\|_{L^{2}_x}+\|u\|_{L^{2}_x}+\|Lu_x\|_{L^{2}_x}. \label{DiffBoundPf2b}
	\end{align}
Substituting \eqref{DiffBoundPf2b} into \eqref{DiffBoundPf2a} yields \eqref{DifferenceBounds1b2}.

 So, we finish the proof of this lemma.\end{proof}

In the next lemma, we find an approximate ODE dynamics for $\gamma(t, v)$. Then, by utilizing the asymptotic equation, we can obtain uniform bounds for $\gamma$ as the time goes to infinity.
	\begin{lemma}\label{AsympEq}
Let $\sigma\geq \frac{1}{2}$. If $u$  solves \eqref{gDNLS} then we have
	\begin{align}
i\gamma_t(t)=\frac{vt^{-\sigma}}{2}|\gamma|^{2\sigma}\gamma-R(t, v), \label{AsympEq1}
	\end{align}
where the remainder $R(t, v)$ satisfies
	\begin{align}
\|R\|_{L^{\infty}}\lesssim & t^{-\frac{5}{4}}\|Lu\|_{L_x^2}+ \|u\|_{L^{\infty}}^{2\sigma+1}+  t^{-\frac{1}{4}}\|u\|_{L^{\infty}}^{2\sigma-1}\|u_x\|_{L^{\infty}}\|Lu\|_{L^2_x}\notag \\
& +t^{-\frac{5}{4}}\|u\|_{L^{\infty}}^{2\sigma-1}\|Lu\|_{L^2_x}\big(\|Lu\|_{L^{2}_x}+\|u\|_{L^{2}_x}+\|Lu_x\|_{L^{2}_x}\big), \label{AsympEq2}
	\end{align}
\begin{align}
\|R\|_{L^{2}_v}\lesssim& 
t^{-\frac{3}{2}}\|Lu\|_{L_x^2}+ t^{-\frac{1}{2}}\|u\|_{L^{\infty}}^{2\sigma}\|u\|_{L^{2}} 
+  t^{-\frac{1}{2}}  \|u\|^{2\sigma-1}_{L^{\infty}}\|u_x\|_{L^{\infty}}
 \|Lu\|_{L^{2}_x}\notag \\
&+  t^{-\frac{3}{2}}\|u\|_{L^{\infty}}^{2\sigma-1}\|Lu\|_{L^2_x}\big(\|Lu\|_{L^{2}_x}+\|u\|_{L^{2}_x}+\|Lu_x\|_{L^{2}_x}\big), \label{AsympEq2-L2}
	\end{align}
and 
	\begin{align}
\|vR\|_{L^{\infty}} \lesssim&
t^{-1}\|u\|_{L^{\infty}_x}+t^{-\frac{7}{4}}\|Lu\|_{L^2_x}+t^{-\frac{5}{4}}\|Lu_x\|_{L^2_x} +t^{-\frac{5}{4}}\|u\|^{2\sigma}_{L^{\infty}_x} \|Lu\|_{L^{2}_x}  \notag \\
& + \|u\|^{2\sigma}_{L^{\infty}_x}\|u_x\|_{L^{\infty}_x} +t^{-\frac{7}{4}} \|u\|^{2\sigma-1}_{L^{\infty}_x}\|Lu\|_{L^2_x} \big(\|Lu\|_{L^{2}_x}+\|u\|_{L^{2}_x}+\|Lu_x\|_{L^{2}_x}\big)\notag \\
 &+t^{-\frac{5}{4}} \|u\|^{2\sigma-1}_{L^{\infty}_x}\big( \|Lu\|_{L_x^2}+\|u\|_{L_x^2}+\|Lu_x\|_{L_x^2}\big)^2\notag \\
&+t^{-\frac{1}{4}} \|u\|^{2\sigma-1}_{L^{\infty}_x}\big( \|Lu\|_{L_x^2}+\|u\|_{L_x^2}+\|Lu_x\|_{L_x^2}\big)\|u_x\|_{L^{\infty}_x} .      \label{AsympEq3}
	\end{align}
	\end{lemma}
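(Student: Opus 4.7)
The plan is to derive the asymptotic ODE by differentiating $\gamma(t,v) = \int u\,\overline{\Phi_v}\,dx$ in time, substituting the equation $i\partial_t u = -\partial_x^2 u - i\partial_x(|u|^{2\sigma}u)$ from \eqref{gDNLS}, and integrating by parts to move derivatives onto the wave packet. This produces
\[
i\gamma_t = -\int u\,\overline{(i\partial_t + \partial_x^2)\Phi_v}\,dx + i\int |u|^{2\sigma}u\,\partial_x\overline{\Phi_v}\,dx,
\]
which splits the right-hand side into a linear mismatch error and a nonlinear term that will yield both the principal contribution $\tfrac{vt^{-\sigma}}{2}|\gamma|^{2\sigma}\gamma$ and all remainders.

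For the linear mismatch I would invoke \eqref{WavePacket1a1} to write $(i\partial_t+\partial_x^2)\Phi_v$ in divergence form, integrate by parts once more, and use \eqref{wFunc} to convert the derivative landing on $u$ into $Lu$. Cauchy-Schwarz, combined with $\|(x-vt)\chi(\tfrac{x-vt}{\sqrt{t}})\|_{L^2_x}\lesssim t^{3/4}$ and $\|\chi'(\tfrac{x-vt}{\sqrt{t}})\|_{L^2_x}\lesssim t^{1/4}$, then yields the $L^\infty_v$ bound of order $t^{-5/4}\|Lu\|_{L^2_x}$. For the $L^2_v$ variant, treating each expression as a convolution in $v$ of $e^{-i\phi}Lu$ against a cutoff with $L^1_x$-size $t$ or $\sqrt{t}$, Young's inequality provides the extra $t^{-1/2}$ saving required for the $t^{-3/2}\|Lu\|_{L^2_x}$ bound in \eqref{AsympEq2-L2}.

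For the nonlinear term, I compute
\[
\partial_x\overline{\Phi_v} = e^{-i\phi}\!\left(-i\frac{x}{2t}\chi\!\left(\frac{x-vt}{\sqrt{t}}\right) + \frac{1}{\sqrt{t}}\chi'\!\left(\frac{x-vt}{\sqrt{t}}\right)\right)
\]
and split $x = vt + (x-vt)$ to isolate the principal contribution $\tfrac{v}{2}\int|w|^{2\sigma}w\,\chi(\tfrac{x-vt}{\sqrt{t}})\,dx$ where $w = e^{-i\phi}u$. Setting $w_0 := w(t,vt)$ and using $\int\chi(\tfrac{x-vt}{\sqrt{t}})\,dx = \sqrt{t}$, I decompose this integral as $\sqrt{t}\,|w_0|^{2\sigma}w_0 + \int(|w|^{2\sigma}w - |w_0|^{2\sigma}w_0)\chi(\tfrac{x-vt}{\sqrt{t}})\,dx$. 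The pointwise identification $w_0 = t^{-1/2}\gamma + O(t^{-3/4}\|Lu\|_{L^2_x})$ from \eqref{DifferenceBounds2a}, together with the mean-value bound $||w|^{2\sigma}w - |w_0|^{2\sigma}w_0| \lesssim \|u\|^{2\sigma}_{L^\infty}|w-w_0|$ and the Cauchy-Schwarz estimate $|w(x)-w_0| \leq |x-vt|^{1/2}\|\partial_x w\|_{L^2_x} = \tfrac{|x-vt|^{1/2}}{2t}\|Lu\|_{L^2_x}$, transforms the first piece into the desired $\tfrac{vt^{-\sigma}}{2}|\gamma|^{2\sigma}\gamma$ plus an error of the required form. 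The leftover $(x-vt)$ and $\chi'$ integrals produced by the splitting contribute at most $\|u\|^{2\sigma+1}_{L^\infty}$ in $L^\infty_v$ and, via the same Young convolution argument, $t^{-1/2}\|u\|^{2\sigma}_{L^\infty}\|u\|_{L^2_x}$ in $L^2_v$.

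The main technical obstacle is the $\|vR\|_{L^\infty}$ estimate \eqref{AsympEq3}, because each remainder term must be re-examined after multiplication by $v$. The strategy is to push the factor $v$ through the main-term approximation, trading $|v|\|u\|_{L^\infty}$ for $\|u_x\|_{L^\infty}$ via the pointwise Fourier-side identification $u_x(t,vt) \approx \tfrac{i}{2}t^{-1/2}e^{i\phi}v\gamma(t,v)$ from \eqref{DifferenceBounds2c} and the vector-field bound \eqref{DifferenceBounds1b2} on $\|v\gamma\|_{L^\infty}$. This substitution is precisely where the terms involving $\|Lu_x\|_{L^2_x}$ and $\|u\|_{L^2_x}$ in \eqref{AsympEq3} arise, tracing back to the commutator identity $[\partial_x, L] = 1$ that controls $\|Lu\|_{L^\infty}$ by $\|Lu\|_{L^2_x}+\|u\|_{L^2_x}+\|Lu_x\|_{L^2_x}$. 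Once these pieces are assembled and the triangle inequality is applied across the contributions from the linear error, the two splitting terms, the mean-value error, and the replacement $w_0 \leftrightarrow t^{-1/2}\gamma$, the three claimed bounds on $R$ follow.
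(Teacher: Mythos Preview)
Your overall strategy matches the paper's, and your treatment of the linear mismatch and the ``leftover'' $(x-vt)$ and $\chi'$ integrals is correct. But there is a real gap in your handling of the nonlinear remainder already at the level of the $\|R\|_{L^\infty_v}$ bound \eqref{AsympEq2}, not only for $\|vR\|_{L^\infty_v}$.

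Because of the derivative in the nonlinearity, the principal nonlinear term you isolate is $\tfrac{v}{2}\int|w|^{2\sigma}w\,\chi(\tfrac{x-vt}{\sqrt t})\,dx$, and \emph{every} error produced by approximating this by $\tfrac{vt^{-\sigma}}{2}|\gamma|^{2\sigma}\gamma$ inherits that factor of $v$. Concretely, your two remainders
\[
\tfrac{v}{2}\sqrt{t}\,\bigl(|w_0|^{2\sigma}w_0 - t^{-\sigma-\frac12}|\gamma|^{2\sigma}\gamma\bigr)
\qquad\text{and}\qquad
\tfrac{v}{2}\int\bigl(|w|^{2\sigma}w - |w_0|^{2\sigma}w_0\bigr)\chi\,dx
\]
are, after applying \eqref{DifferenceBounds2a} and your Cauchy--Schwarz step, both of order $|v|\,t^{-1/4}\|u\|_{L^\infty}^{2\sigma}\|Lu\|_{L^2_x}$. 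Since $v$ ranges over $\mathbb{R}$, this is not of the form claimed in \eqref{AsympEq2}. You postpone the trade $|v|\|u\|_{L^\infty}\rightsquigarrow\|u_x\|_{L^\infty}$ to the discussion of \eqref{AsympEq3}, but it is already required here.

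The paper handles this by grouping the nonlinear remainder as $R_3=-\int\tfrac{vu}{2}\overline{\Phi_v}\bigl(|u|^{2\sigma}-|u(t,vt)|^{2\sigma}\bigr)dx$ and $R_4=\tfrac{v\gamma}{2}\bigl(t^{-\sigma}|\gamma|^{2\sigma}-|u(t,vt)|^{2\sigma}\bigr)$. In $R_3$ the factor $v$ is attached to $u$ under the integral, so one replaces $v$ by $x/t$ on $\operatorname{supp}\chi$ and uses $xu=Lu-2itu_x$ together with $\|Lu\|_{L^\infty}\lesssim\|Lu\|_{L^2}+\|u\|_{L^2}+\|Lu_x\|_{L^2}$; in $R_4$ the factor $v$ is attached to $\gamma$, and one invokes \eqref{DifferenceBounds1b2} directly. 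Your decomposition can be salvaged the same way, but the $v$-conversion has to be carried out at this stage, not deferred.
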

\begin{remark}
For derivative nonlinear Schr\"odinger equation (with $\sigma=1$ in \eqref{gDNLS}), Byars \cite{Bya24} obtained  the following upper bounds  
	\begin{align}
\|R\|_{L^{\infty}}\lesssim  &t^{-\frac{5}{4}}\|Lu\|_{L^{2}_x}+\|u\|^3_{L^{\infty}}+t^{-\frac{1}{4}}\|u\|^2_{L^{\infty}}\big(\|Lu\|_{L^{2}_x}+\|u\|_{H^{3}_x}\big)\notag \\
&  +t^{-\frac{1}{4}}\|u\|^2_{L^{\infty}}\|Lu\|_{L^{2}_x}\big(\|Lu\|_{L^{2}_x}+\|u\|_{H^{2}_x}\big),   \label{AsympEq2S}
	\end{align}
and
	\begin{align}
\|vR\|_{L^{\infty}}\lesssim  &t^{-\frac{13}{12}}\big(\|Lu\|_{L^{2}_x}+\|u\|_{H^{3}_x}\big)+t^{-\frac{5}{4}}\|u\|^2_{L^{\infty}}\|Lu\|_{L^{2}_x}\notag \\
& +\|u\|^2_{L^{\infty}}\|u_x\|_{L^{\infty}} +t^{-\frac{3}{20}}\|u\|^2_{L^{\infty}}\big(\|Lu\|_{L^{2}_x}+\|u\|_{H^{5}_x}\big) \notag \\
&+t^{-\frac{3}{4}}\|u\|_{L^{\infty}}\|Lu\|_{L^{2}_x}\big(\|Lu\|_{L^{2}_x}+\|u\|_{H^{4}_x}\big).  \label{AsympEq3S}
	\end{align}
Comparing  \eqref{AsympEq2}-\eqref{AsympEq3} with \eqref{AsympEq2S}-\eqref{AsympEq3S}, we have significantly lowered the regularity. Moreover, we obtain better decay estimates with respect to time $t$.
\end{remark}
\begin{proof} A direct computation yields
\begin{align}
 i\gamma_t=\int iu_t\overline{\Phi_v}+iu\partial_t\overline{\Phi_{v}}dx=\int -u\overline{(i\partial_t+\partial^2_x)\Phi_v}-i\partial_x\big(|u|^{2\sigma}u\big)\overline{\Phi_{v}}dx.
\nonumber
	\end{align}
According to identity \eqref{WavePacket1a1} and \eqref{wFunc}, we have
\begin{align}
 i\gamma_t&=\int -u\frac{e^{-i\phi}}{2t}\partial_x \left[-i(x-vt)\chi\left(\frac{x-vt}{\sqrt{t}}\right)+2\sqrt{t}\chi'\left(\frac{x-vt}{\sqrt{t}}\right)\right]+i\big(|u|^{2\sigma}u\big)\partial_x\overline{\Phi_{v}}dx \notag\\ 
&=\int \frac{-ie^{-i\phi}}{4t^2}Lu \left[-i(x-vt)\chi\left(\frac{x-vt}{\sqrt{t}}\right)+2\sqrt{t}\chi'\left(\frac{x-vt}{\sqrt{t}}\right)\right]+i\big(|u|^{2\sigma}u\big)\partial_x\overline{\Phi_{v}}dx.
\nonumber
	\end{align}

Note that 
$$\partial_x \Phi_{v}=t^{-\frac{1}{2}}\Psi_v+\frac{ix}{2t}\Phi_{v},$$
where $\Psi_v:=e^{i\phi}\chi'(\cdot)$ is very similar to  $\Phi_{v}$. Hence, we can write
\begin{align}
 i\gamma_t=\frac{1}{2}t^{-\sigma}v|\gamma|^{2\sigma}\gamma-R(t,v),
\nonumber
	\end{align}
where the remainder term is 
\begin{align}
R(t,v)=&\int \frac{ie^{-i\phi}}{4t^2}Lu \left[-i(x-vt)\chi\left(\frac{x-vt}{\sqrt{t}}\right)+2\sqrt{t}\chi'\left(\frac{x-vt}{\sqrt{t}}\right)\right]dx \notag \\
&-i\int\big(|u|^{2\sigma}u\big)\partial_x\overline{\Phi_{v}}dx+\frac{1}{2}t^{-\sigma}v|\gamma|^{2\sigma}\gamma.
\nonumber
	\end{align}
In order to show \eqref{AsympEq2} and \eqref{AsympEq3},  We split $R(t,v)$ into four pieces
$$R(t,v)=R_1+R_2+R_3+R_4$$
with
\begin{align}
R_1=&\int \frac{ie^{-i\phi}}{4t^2}Lu \left[-i(x-vt)\chi\left(\frac{x-vt}{\sqrt{t}}\right)+2\sqrt{t}\chi'\left(\frac{x-vt}{\sqrt{t}}\right)\right]dx, \notag \\
R_2=&-i\int\big(|u|^{2\sigma}u\big)t^{-\frac{1}{2}}\overline{\Psi_v}dx,\notag \\
R_3=&-\int\frac{vu}{2}\overline{\Phi_{v}}\big(|u|^{2\sigma}-|u(t, vt)|^{2\sigma}\big)dx,\notag \\
R_4=&\frac{v\gamma}{2}\big(t^{-\sigma}|\gamma|^{2\sigma}-|u(t, vt)|^{2\sigma}\big).
\nonumber
	\end{align}

\vspace{1mm}
\noindent
$\bullet$ {\bf Estimate for  $\|R\|_{L^{\infty}}$ and $\|R\|_{L_v^{2}}$}  
\vspace{1mm}

By using H\"older inequality, one can bound $R_1$  
\begin{align}
|R_1|&\lesssim t^{-2}\|Lu\|_{L_x^2}\left( \left\|(x-vt)\chi\left(\frac{x-vt}{\sqrt{t}}\right) \right\|_{L_x^2}+ \sqrt{t}\left\|\chi'\left(\frac{x-vt}{\sqrt{t}}\right) \right\|_{L_x^2} \right) \notag\\
&\lesssim t^{-2}\|Lu\|_{L_x^2}\left( t^{\frac{3}{4}}\|x\chi(x)\|_{L_x^2}+ t^{\frac{3}{4}}\|\chi'(x)\|_{L_x^2}\right)\lesssim t^{-\frac{5}{4}}\|Lu\|_{L_x^2}.
\label{Remainder111R_1}
	\end{align}
Note that $R_1$ can be expressed as a convolution in $v$,
$$R_1=\frac{1}{2t}  \left(itv\chi(t^{\frac{1}{2}}v)-2\sqrt{t}\chi'(t^{\frac{1}{2}}v)\right)*_v \left(\partial_v w(t, tv)\right).$$
So we have
\begin{align}
\|R_1\|_{L_v^2}&\lesssim t^{-1}\left( \| tv\chi(t^{\frac{1}{2}}v) \|_{L_v^1}+  \|\sqrt{t}\chi'(t^{\frac{1}{2}}v)\|_{L_v^1} \right) \|\partial_v w(t, tv)\|_{L_v^2}\lesssim t^{-\frac{3}{2}}\|Lu\|_{L_x^2}.
\label{Remainder111R_1-L2}
	\end{align}

Next, we bound $R_2$ as follows:
\begin{align}
|R_2|\lesssim \|u\|_{L^{\infty}}^{2\sigma+1}t^{-\frac{1}{2}}\int\left|\overline{\Psi_v}\left(\frac{x-vt}{\sqrt{t}}\right)\right|dx\lesssim \|u\|_{L^{\infty}}^{2\sigma+1},
\label{Remainder111R_2}
	\end{align}
and 
\begin{align}
\|R_2\|_{L_v^2}&=\left\|(|u|^{2\sigma}u)(tv)*_vt^{\frac{1}{2}}\overline{\Psi_v}(\sqrt{t}v)\right\|_{L_v^2} \notag\\
&\lesssim \left\|(|u|^{2\sigma}u)(tv)\right\|_{L_v^2} \|t^{\frac{1}{2}}\overline{\Psi_v}(\sqrt{t}v)\|_{L_v^1} 
\lesssim t^{-\frac{1}{2}}\|u\|_{L^{\infty}}^{2\sigma}\|u\|_{L^{2}}.
\label{Remainder111R_2-L2}
	\end{align}

Let us estimate $R_3$. We observe that $|x|\approx |vt|$ from the support of $\chi$, hence 
\begin{align}
|R_3|&\lesssim t^{-1}\int\frac{|xu\overline{\Phi_{v}}|}{2}\big||u|^{2\sigma}-|u(t, vt)|^{2\sigma}\big|dx \notag \\
&\lesssim t^{-1} \|u\|_{L^{\infty}}^{2\sigma-1}\int\big|(Lu-2itu_x)\overline{\Phi_{v}}\big|\big|w(t, x)-w(t, vt) \big|dx \notag \\
&\lesssim t^{-1} \|u\|_{L^{\infty}}^{2\sigma-1}\int\big|Lu\overline{\Phi_{v}} \big|\big|w(t, x)-w(t, vt) \big|dx\notag \\
&\hspace{5mm}+\|u\|_{L^{\infty}}^{2\sigma-1}\int\big|u_x \overline{\Phi_{v}}\big|\big|w(t, x)-w(t, vt) \big|dx\notag \\
&:=R_{3,I}+R_{3,II}
\label{Remainder111R_3}
	\end{align}
provided $\sigma\geq \frac{1}{2}$.

 By the change of variables $x=vt+zt$, we have
\begin{align}
R_{3,I}&\lesssim t^{-1} \|u\|_{L^{\infty}}^{2\sigma-1}\|Lu\|_{L^{\infty}}\int\chi\left(\frac{x-vt}{\sqrt{t}}\right) \big|w(t, x)-w(t, vt) \big|dx\notag \\
&\lesssim  \|u\|_{L^{\infty}}^{2\sigma-1}\|Lu\|_{L^{\infty}}\int\chi(\sqrt{t}z) \big|w(t, vt+zt)-w(t, vt) \big|dz. 
\label{Remainder111R_3Ia1}
	\end{align}
By the Fundamental Theorem of Calculus,
\begin{align}
\big|w(t, (v+z)t)-w(t, vt) \big|\lesssim \int_{v}^{z+v}| \partial_yw(t, yt)|dy\lesssim |z|^{\frac{1}{2}}\|\partial_vw(t, vt)\|_{L^2_v}
\nonumber
	\end{align}
which yields
\begin{align}
\int\chi(\sqrt{t}z) \big|w(t, vt+zt)-w(t, vt) \big|dz \lesssim \big\||z|^{\frac{1}{2}}\chi(\sqrt{t}z)\big\|_{L^1_z} \big\|\partial_vw(t, vt)\big\|_{L^2_v}.
\nonumber
	\end{align}
Notice that 
$$\partial_vw(t, vt)=t\big(\partial_xe^{-i\phi}u)(t, vt)=-\frac{i}{2}\big(e^{-i\phi}Lu\big)(t, vt).$$
So, 
\begin{align}
\int\chi(\sqrt{t}z) \big|w(t, vt+zt)-w(t, vt) \big|dz
\lesssim t^{-\frac{3}{4}} \big\|Lu(t, vt)\big\|_{L^2_v}\lesssim t^{-\frac{5}{4}}\|Lu\|_{L^2_x}.
\label{Remainder111R_3Ia2}
	\end{align}
Then, it follows from \eqref{Remainder111R_3Ia1},\eqref{Remainder111R_3Ia2} and \eqref{DiffBoundPf2b} that
\begin{align}
R_{3,I}&\lesssim  t^{-\frac{5}{4}}\|u\|_{L^{\infty}}^{2\sigma-1}\|Lu\|_{L^2_x} \|Lu\|_{L^{\infty}}\notag \\
&\lesssim  t^{-\frac{5}{4}}\|u\|_{L^{\infty}}^{2\sigma-1}\|Lu\|_{L^2_x}\big(\|Lu\|_{L^{2}_x}+\|u\|_{L^{2}_x}+\|Lu_x\|_{L^{2}_x}\big).
\label{Remainder111R_3Ia3}
	\end{align}

Arguing similarly, one gets
\begin{align}
R_{3,II}&\lesssim  \|u\|_{L^{\infty}}^{2\sigma-1}\|u_x\|_{L^{\infty}}\int\chi\left(\frac{x-vt}{\sqrt{t}}\right) \big|w(t, x)-w(t, vt) \big|dx\notag \\
&\lesssim  t^{-\frac{1}{4}}\|u\|_{L^{\infty}}^{2\sigma-1}\|u_x\|_{L^{\infty}}\|Lu\|_{L^2_x}.
\label{Remainder111R_3IIa1}
	\end{align}
 Putting \eqref{Remainder111R_3Ia3} and \eqref{Remainder111R_3IIa1} into \eqref{Remainder111R_3}, we have
\begin{align}
|R_{3}|\lesssim & t^{-\frac{5}{4}}\|u\|_{L^{\infty}}^{2\sigma-1}\|Lu\|_{L^2_x}\big(\|Lu\|_{L^{2}_x}+\|u\|_{L^{2}_x}+\|Lu_x\|_{L^{2}_x}\big)\notag \\
& +  t^{-\frac{1}{4}}\|u\|_{L^{\infty}}^{2\sigma-1}\|u_x\|_{L^{\infty}}\|Lu\|_{L^2_x}.
\label{Remainder111R_3f}
	\end{align}

To show the $L_v^2$ bound of $R_3$, we use the following estimate
\begin{align}
\big|w(t, (v+z)t)-w(t, vt) \big|\lesssim \int_{0}^{1}|z \partial_vw(t, (v+hz)t)|dh
\nonumber
	\end{align}
 to obtain
\begin{align}
\|R_3\|_{L_v^2}&\lesssim  \|u\|_{L^{\infty}}^{2\sigma-1}\left(\|Lu\|_{L^{\infty}}+t\|u_x\|_{L^{\infty}}\right)\left\|\int\chi(\sqrt{t}z) \big|w(t, vt+zt)-w(t, vt) \big|dz\right\|_{L_v^2}\notag \\
&\lesssim  \|u\|_{L^{\infty}}^{2\sigma-1}\left(\|Lu\|_{L^{\infty}}+t\|u_x\|_{L^{\infty}}\right)\left\|\int_{0}^{1}\int\chi(\sqrt{t}z) |z| |\partial_vw(t, (v+hz)t)|dzdh\right\|_{L_v^2}\notag \\
&\lesssim  \|u\|_{L^{\infty}}^{2\sigma-1}\left(\|Lu\|_{L^{\infty}}+t\|u_x\|_{L^{\infty}}\right)\int\chi(\sqrt{t}z) |z| dz\|\partial_vw(t, vt)\|_{L_v^2}\notag \\
&\lesssim  t^{-\frac{3}{2}}\|u\|_{L^{\infty}}^{2\sigma-1}\left(\|Lu\|_{L^{\infty}}+t\|u_x\|_{L^{\infty}}\right) \|Lu\|_{L_x^2}\notag \\
&\lesssim  t^{-\frac{3}{2}}\|u\|_{L^{\infty}}^{2\sigma-1}\|Lu\|_{L^2_x}\big(\|Lu\|_{L^{2}_x}+\|u\|_{L^{2}_x}+\|Lu_x\|_{L^{2}_x}+t\|u_x\|_{L^{\infty}}\big).\label{Remainder111R_3-L2}
	\end{align}

We now estimate $R_4$. For $\sigma\geq 1/2$, one has
\begin{align}
\big|t^{-\sigma}|\gamma|^{2\sigma}-|u(t, vt)|^{2\sigma}\big|&=\left||u(t, vt)|^{2\sigma}-|t^{-\frac{1}{2}}e^{i\phi(t, vt)}\gamma|^{2\sigma-1}\big|t^{-\frac{1}{2}}e^{i\phi(t, vt)}\gamma\big|\right|\notag \\
&\lesssim \|u\|^{2\sigma-1}_{L^{\infty}}\left||u(t, vt)|-\big|t^{-\frac{1}{2}}e^{i\phi(t, vt)}\gamma\big|\right|\nonumber.
	\end{align}
Hence, by the inequality above, \eqref{DifferenceBounds2a}, \eqref{DifferenceBounds2b} and \eqref{DifferenceBounds1b2} we get
\begin{align}
|R_4|\lesssim& t^{-\frac{3}{4}}\|v\gamma\|_{L^{\infty}}\|u\|^{2\sigma-1}_{L^{\infty}}
 \|Lu\|_{L^{2}_x} \notag\\
\lesssim& t^{-\frac{5}{4}} \left(\|Lu\|_{L^{2}_x}+\|u\|_{L^{2}_x}+\|Lu_x\|_{L^{2}_x} \right) \|u\|^{2\sigma-1}_{L^{\infty}}
 \|Lu\|_{L^{2}_x}\notag \\ 
&  +  t^{-\frac{1}{4}} \|u_x\|_{L^{\infty}} \|u\|^{2\sigma-1}_{L^{\infty}}
 \|Lu\|_{L^{2}_x} \label{Remainder111R_4},
	\end{align}
and 
\begin{align}
\|R_4\|_{L^{2}_v}\lesssim& t^{-1}\|v\gamma\|_{L^{\infty}}\|u\|^{2\sigma-1}_{L^{\infty}}
 \|Lu\|_{L^{2}_x} \notag\\
\lesssim& t^{-\frac{3}{2}} \left(\|Lu\|_{L^{2}_x}+\|u\|_{L^{2}_x}+\|Lu_x\|_{L^{2}_x} \right) \|u\|^{2\sigma-1}_{L^{\infty}}
 \|Lu\|_{L^{2}_x}\notag \\ 
&  +  t^{-\frac{1}{2}} \|u_x\|_{L^{\infty}} \|u\|^{2\sigma-1}_{L^{\infty}}
 \|Lu\|_{L^{2}_x} \label{Remainder111R_4-L2}.
	\end{align}

Then,  we obtain \eqref{AsympEq2} by collecting \eqref{Remainder111R_1}, \eqref{Remainder111R_2}, \eqref{Remainder111R_3f} and \eqref{Remainder111R_4}. And  \eqref{AsympEq2-L2}  follows directly from \eqref{Remainder111R_1-L2}, \eqref{Remainder111R_2-L2}, \eqref{Remainder111R_3-L2} and \eqref{Remainder111R_4-L2}.

\vspace{1mm}
\noindent
$\bullet$ {\bf Estimate for  $\|vR\|_{L^{\infty}}$}  
\vspace{1mm}

Denote
$$F(x)=-ix\chi(x)+2\chi'(x).$$
Observe that
\begin{align}
vR_1=&\int -vu\frac{e^{-i\phi}}{2t}\partial_x \left(-i(x-vt)\chi\left(\frac{x-vt}{\sqrt{t}}\right)+2\sqrt{t}\chi'\left(\frac{x-vt}{\sqrt{t}}\right)\right)dx\notag \\
=&-\frac{v}{2\sqrt{t}}\int u e^{-i\phi} \partial_x F\left(\frac{x-vt}{\sqrt{t}}\right)dx \notag \\
=&\frac{1}{2\sqrt{t}}\int \frac{x-vt}{t}u e^{-i\phi} \partial_x F\left(\frac{x-vt}{\sqrt{t}}\right)dx -\frac{1}{2\sqrt{t}}\int \frac{x}{t}u e^{-i\phi} \partial_x F\left(\frac{x-vt}{\sqrt{t}}\right)dx.\label{VR1TT}
	\end{align}

On one hand, as $\big|\frac{x-vt}{\sqrt{t}}\big|\lesssim 1$, we see the first term in \eqref{VR1TT} can be bounded by
\begin{align}
\left|\frac{1}{2\sqrt{t}}\int \frac{x-vt}{t}u e^{-i\phi} \partial_x F\left(\frac{x-vt}{\sqrt{t}}\right)dx\right|
\lesssim & t^{-1}\|u\|_{L^{\infty}}\| \partial_x F\|_{L^1_x}\lesssim  t^{-1}\|u\|_{L^{\infty}}.\label{VR1T1}
	\end{align}
On the other hand, for the second term in \eqref{VR1TT} we have
\begin{align}
&\frac{1}{2\sqrt{t}}\int \frac{x}{t}u e^{-i\phi} \partial_x F\left(\frac{x-vt}{\sqrt{t}}\right)dx\notag \\
=&\frac{1}{2t^{3/2}}\int (Lu-2itu_x)e^{-i\phi} \partial_x F\left(\frac{x-vt}{\sqrt{t}}\right)dx\notag \\
=&\frac{1}{2t^{3/2}}\int Lu e^{-i\phi} \partial_x F\left(\frac{x-vt}{\sqrt{t}}\right)dx\notag \\
&-\frac{i}{\sqrt{t}}\int \big(u_x(t,x)e^{-i\phi(t,x)}-u_x(t,vt)e^{-i\phi(t,vt)}\big) \partial_x F\left(\frac{x-vt}{\sqrt{t}}\right)dx\notag \\
:=&T_1+T_2.\label{VR1T2}
	\end{align}

We use H\"older inequality to estimate $T_1$.
\begin{align}
|T_1|\lesssim t^{-\frac{3}{2}}\|Lu\|_{L^2_x}\left\|t^{-\frac{1}{2}} F_x \big(t^{-\frac{1}{2}}(x-vt)\big)\right\|_{L^2_x}\lesssim t^{-\frac{7}{4}}\|Lu\|_{L^2_x}.\label{VR1T21}
	\end{align}
Utilizing the change of variables $x=vt+zt$, one can write
\begin{align}
T_2=-i\int \big(W(t,vt+zt)-W(t,vt)\big)   F_x(\sqrt{t}z)dz\nonumber
	\end{align}
where $W(t,\cdot)=u_x(t,\cdot)e^{-i\phi(t,\cdot)}$ and $\partial_vW(t, vt)=-\frac{i}{2}e^{-i\phi(t, vt)}(Lu_x)(t, vt)$. Then using the Fundamental Theorem of Calculus
\begin{align}
\big|W(t,vt+zt)-W(t,vt)\big|\lesssim |z|^{\frac{1}{2}}\|\partial_vW(t, vt)\|_{L_v^2},
\label{VRFundTheCal-u_x}
	\end{align}
we get 
\begin{align}
|T_2|\lesssim t^{-\frac{3}{4}}\|\partial_vW(t, vt)\|_{L_v^2}\lesssim t^{-\frac{5}{4}}\|Lu_x\|_{L_x^2}.\label{VR1T22}
	\end{align}
Substituting \eqref{VR1T21} and \eqref{VR1T22}  into \eqref{VR1T2}, we obtain
\begin{align}
\frac{1}{2\sqrt{t}}\left|\int \frac{x}{t}u e^{-i\phi} \partial_x F\left(\frac{x-vt}{\sqrt{t}}\right)dx \right|
\lesssim t^{-\frac{7}{4}}\|Lu\|_{L^2_x}+t^{-\frac{5}{4}}\|Lu_x\|_{L^2_x} \nonumber
	\end{align}
which together with \eqref{VR1T1} and \eqref{VR1TT} yield 
\begin{align}
\|vR_1\|_{L^{\infty}}
\lesssim t^{-1}\|u\|_{L^{\infty}_x}+t^{-\frac{7}{4}}\|Lu\|_{L^2_x}+t^{-\frac{5}{4}}\|Lu_x\|_{L^2_x}.\label{Remainder222R_1}
	\end{align}

Let us estimate $vR_2$. It is easy to see that
\begin{align}
\|vR_2\|_{L^{\infty}}
\lesssim& t^{-\frac{1}{2}} \left|\int \frac{x}{t}u |u|^{2\sigma} \overline{\Psi_v}dx\right|\notag \\
\lesssim& t^{-\frac{3}{2}} \left|\int\big(Lu-2itu_x\big)|u|^{2\sigma}\overline{\Psi_v}dx\right|\notag \\
\lesssim &t^{-\frac{3}{2}} \left|\int |u|^{2\sigma}Lu \overline{\Psi_v}dx\right|+t^{-\frac{1}{2}} \left|\int u_x  |u|^{2\sigma}\overline{\Psi_v}dx\right|\notag \\
\lesssim &t^{-\frac{3}{2}}\|u\|^{2\sigma}_{L^{\infty}} \|Lu\|_{L^{2}} \|\Psi_v\|_{L^{2}}+t^{-\frac{1}{2}}\|u_x\|_{L^{\infty}} \|u\|^{2\sigma}_{L^{\infty}} \|\Psi_v\|_{L^{1}}\notag \\
\lesssim &t^{-\frac{5}{4}}\|u\|^{2\sigma}_{L^{\infty}} \|Lu\|_{L^{2}} +\|u_x\|_{L^{\infty}} \|u\|^{2\sigma}_{L^{\infty}}.\label{Remainder222R_2}
	\end{align}

Next, we turn to estimate $vR_3$. Using the facts $|x|\approx |vt|$ and $xu=Lu-2itu_x$, we immediately get 
\begin{align}
\|vR_3\|_{L^{\infty}}
\lesssim& t^{-2}\|u\|^{2\sigma-1}_{L^{\infty}}\int|xu\overline{\Phi_{v}}|\big|xu(t,x)e^{-i\phi(t,x)}-vtu(t, vt)e^{-i\phi(t,vt)}\big|dx\notag \\
\lesssim& t^{-2} \|u\|^{2\sigma-1}_{L^{\infty}} \int |Lu-2itu_x | |\overline{\Phi_{v}}|
\big|Lu(t,x)e^{-i\phi(t,x)}-Lu(t,vt)e^{-i\phi(t,vt)}\big|dx \notag \\
&+t^{-1} \|u\|^{2\sigma-1}_{L^{\infty}} \int |Lu-2itu_x | |\overline{\Phi_{v}}|
\big|u_x(t,x)e^{-i\phi(t,x)}-u_x(t, vt)e^{-i\phi(t,vt)}\big|dx\notag \\
\lesssim &t^{-2} \|u\|^{2\sigma-1}_{L^{\infty}} \int |Lu(t, x) | |\overline{\Phi_{v}}|
\big|Lu(t,x)e^{-i\phi(t,x)}-Lu(t,vt)e^{-i\phi(t,vt)}\big|dx \notag \\
&+t^{-1} \|u\|^{2\sigma-1}_{L^{\infty}} \int |Lu(t, x)| |\overline{\Phi_{v}}|
 |u_x(t,x)|dx\notag \\
&+ \|u\|^{2\sigma-1}_{L^{\infty}} \int |u_x | |\overline{\Phi_{v}}|
\big|u_x(t,x)e^{-i\phi(t,x)}-u_x(t, vt)e^{-i\phi(t,vt)}\big|dx\notag \\
:=&VR_{3,I}+VR_{3,II}+VR_{3,III}.\label{Remainder222R_3a0}
	\end{align}

$VR_{3,I}$ is easy to be controlled by taking use of  H\"older inequality and \eqref{DiffBoundPf2b}
\begin{align}
VR_{3,I}\lesssim &t^{-2} \|u\|^{2\sigma-1}_{L^{\infty}} \big(\|Lu\|^2_{L^2_x}+\|Lu\|_{L^2_x}\|\Phi_{v}\|_{L^2_x}\|Lu\|_{L^{\infty}_x}\big) \notag \\
\lesssim &t^{-\frac{7}{4}} \|u\|^{2\sigma-1}_{L^{\infty}}\|Lu\|_{L^2_x} \big(\|Lu\|_{L^{2}_x}+\|u\|_{L^{2}_x}+\|Lu_x\|_{L^{2}_x}\big).\label{Remainder222R_3a0I}
	\end{align}
For $VR_{3,II}$, we have
\begin{align}
VR_{3,II}\lesssim &t^{-1} \|u\|^{2\sigma-1}_{L^{\infty}}  \|Lu\|_{L^2_x}\|\Phi_{v}\|_{L^2_x}\|u_x\|_{L^{\infty}_x}\notag \\
\lesssim &t^{-\frac{3}{4}}\|u\|^{2\sigma-1}_{L^{\infty}}  \|Lu\|_{L^2_x}\|u_x\|_{L^{\infty}_x}.\label{Remainder222R_3a0II}
	\end{align}
Taking variable substitution $x=vt+zt$ and using \eqref{VRFundTheCal-u_x} deduce that
\begin{align}
VR_{3,III}\lesssim & t^{\frac{1}{2}} \|u\|^{2\sigma-1}_{L^{\infty}}\|Lu_x\|_{L_x^2} \int \big|u_x(t, vt+zt)    \chi(\sqrt{t}z) 
|z|^{\frac{1}{2}}\big|dz\notag \\
 \lesssim &  t^{-\frac{1}{4}} \|u\|^{2\sigma-1}_{L^{\infty}}\|Lu_x\|_{L_x^2} \|u_x\|_{L^{\infty}_x}.\label{Remainder222R_3a0III}
	\end{align}
Combining \eqref{Remainder222R_3a0I}-\eqref{Remainder222R_3a0III} and \eqref{Remainder222R_3a0}, we get
\begin{align}
\|vR_3\|_{L^{\infty}}\lesssim& t^{-\frac{7}{4}} \|u\|^{2\sigma-1}_{L^{\infty}}\|Lu\|_{L^2_x} \big(\|Lu\|_{L^{2}_x}+\|u\|_{L^{2}_x}+\|Lu_x\|_{L^{2}_x}\big)\notag \\
&+t^{-\frac{3}{4}}\|u\|^{2\sigma-1}_{L^{\infty}}  \|Lu\|_{L^2_x}\|u_x\|_{L^{\infty}_x}+t^{-\frac{1}{4}} \|u\|^{2\sigma-1}_{L^{\infty}}\|Lu_x\|_{L_x^2} \|u_x\|_{L^{\infty}_x}.\label{Remainder222R_3}
	\end{align}

Finally, we  estimate $vR_4$. Observe that
\begin{align}
&|v|\big|t^{-\sigma}|\gamma|^{2\sigma}-|u(t, vt)|^{2\sigma}\big|\notag \\
\lesssim &\|u\|^{2\sigma-1}_{L^{\infty}_x}|v|\big|t^{-\frac{1}{2}}\gamma-u(t, vt)e^{-i\phi(t, vt)}\big|\notag \\
\lesssim &t^{-\frac{3}{2}} \|u\|^{2\sigma-1}_{L^{\infty}_x}  \int \left|\big(xue^{-i\phi}-vtu(t, vt)e^{-i\phi(t, vt)}\big)\overline{\chi\left(\frac{x-vt}{\sqrt{t}}\right)}\right|dx\notag \\
\lesssim & VR_{4, I}+VR_{4, II} \label{Remainder222R_4a0}
	\end{align}
with 
\begin{align}
VR_{4, I}:=&t^{-\frac{3}{2}} \|u\|^{2\sigma-1}_{L^{\infty}_x}\int \big(|Lu(t, x)|+|Lu(t, vt)|\big)\chi\left(\frac{x-vt}{\sqrt{t}}\right) dx\notag \\
\lesssim&\|u\|^{2\sigma-1}_{L^{\infty}_x}\big(t^{-\frac{5}{4}} \|Lu\|_{L_x^2}+t^{-1} \|Lu\|_{L^{\infty}}\big)\notag \\
\lesssim&t^{-1} \|u\|^{2\sigma-1}_{L^{\infty}_x}\big( \|Lu\|_{L_x^2}+\|u\|_{L_x^2}+\|Lu_x\|_{L_x^2}\big)
\label{Remainder222R_4a1}
	\end{align}
and 
\begin{align}
VR_{4, II}:=&t^{-\frac{1}{2}} \|u\|^{2\sigma-1}_{L^{\infty}_x}\int \big|u_xe^{-i\phi}-u_x(t, vt)e^{-i\phi(t, vt)}\big|\chi\left(\frac{x-vt}{\sqrt{t}}\right) dx\notag \\
\lesssim&\|u\|^{2\sigma-1}_{L^{\infty}_x}\|Lu_x\|_{L_x^2} \int  \chi(\sqrt{t}z) 
|z|^{\frac{1}{2}}dz\notag \\
\lesssim&t^{-\frac{3}{4}} \|u\|^{2\sigma-1}_{L^{\infty}_x}\|Lu_x\|_{L_x^2}.
\label{Remainder222R_4a2}
	\end{align}
The veracity of the last step in \eqref{Remainder222R_4a2} follows similarly from  \eqref{Remainder222R_3a0III}. Then, from \eqref{Remainder222R_4a0}-\eqref{Remainder222R_4a2} one obtains
\begin{align}
|v|\big|t^{-\sigma}|\gamma|^{2\sigma}-|u(t, vt)|^{2\sigma}\big|
\lesssim &t^{-1} \|u\|^{2\sigma-1}_{L^{\infty}_x}\big( \|Lu\|_{L_x^2}+\|u\|_{L_x^2}+\|Lu_x\|_{L_x^2}\big)\notag \\
&+t^{-\frac{3}{4}} \|u\|^{2\sigma-1}_{L^{\infty}_x}\|Lu_x\|_{L_x^2}  \nonumber
	\end{align}
which  together with \eqref{DifferenceBounds1b2} yield that
\begin{align}
\|vR_4\|_{L^{\infty}}\lesssim &\|v\gamma\|_{L^{\infty}} |v|\big|t^{-\sigma}|\gamma|^{2\sigma}-|u(t, vt)|^{2\sigma}\big|\notag \\
\lesssim &t^{-\frac{3}{2}} \|u\|^{2\sigma-1}_{L^{\infty}_x}\big( \|Lu\|_{L_x^2}+\|u\|_{L_x^2}+\|Lu_x\|_{L_x^2}\big)^2\notag \\
&+t^{-\frac{1}{2}} \|u\|^{2\sigma-1}_{L^{\infty}_x}\big( \|Lu\|_{L_x^2}+\|u\|_{L_x^2}+\|Lu_x\|_{L_x^2}\big)\|u_x\|_{L^{\infty}}\notag \\
&+t^{-\frac{5}{4}} \|u\|^{2\sigma-1}_{L^{\infty}_x}\big( \|Lu\|_{L_x^2}+\|u\|_{L_x^2}+\|Lu_x\|_{L_x^2}\big)\|Lu_x\|_{L^{2}_x}\notag \\
&+t^{-\frac{1}{4}} \|u\|^{2\sigma-1}_{L^{\infty}_x}\|Lu_x\|_{L_x^2}\|u_x\|_{L_x^{\infty}}. \label{Remainder222R_4}
	\end{align}

Collecting  \eqref{Remainder222R_1}, \eqref{Remainder222R_2}, \eqref{Remainder222R_3} and  \eqref{Remainder222R_4}, we deduce \eqref{AsympEq3}. So we finish the proof of this lemma.
\end{proof}

\subsection{Energy bounds on  $Lu$ and $Lu_x$}

 In this subsection, we show estimates for the growth of  $\|Lu\|_{L^{2}_x}$ and $\|Lu_x\|_{L^{2}_x}$ under the assumptions
\begin{align}
\|u\|_{L^{\infty}_x}\leq D\epsilon \langle t \rangle^{-\frac{1}{2}}, \hspace{10mm} \|u_x\|_{L^{\infty}_x}\leq \sqrt{D\epsilon} \langle t \rangle^{-\frac{1}{2}}, \label{BootstrapAssump}
	\end{align}
where $D>0$ is a  universal constant.

 Acting operator $L$ on both sides of \eqref{gDNLS} and using the identities in \eqref{VecField} deduce the equation for $Lu$
\begin{align}
i\partial_t Lu +\partial^2_{x}Lu+i\partial_x\big((\sigma+1)|u|^{2\sigma}Lu-\sigma |u|^{2\sigma-2}u^{2}\overline{Lu}\big)-i|u|^{2\sigma}u=0. \label{gDNLS-Lu}
	\end{align}
Then, applying the energy method in the usual way, we get
\begin{align}
\frac{d}{dt}\|Lu\|^2_{L^{2}_x}&=(\sigma+1)\int |u|^{2\sigma}\partial_{x}(|Lu|^2)dx+\sigma\text{Re}\int |u|^{2\sigma-2}u^{2}\partial_{x}(\overline{Lu}^2)dx \notag \\
&\hspace{10mm}+2\text{Re}\int |u|^{2\sigma}u\overline{Lu}dx \notag \\
&=-(\sigma+1)\int\partial_{x}(|u|^{2\sigma}) |Lu|^2 dx-\sigma\text{Re}\int \partial_{x}(|u|^{2\sigma-2}u^{2})\overline{Lu}^2dx \notag \\
&\hspace{10mm}+2\text{Re}\int |u|^{2\sigma}u\overline{Lu}dx \nonumber
	\end{align}
which by using H\"older inequality and \eqref{BootstrapAssump} yields
\begin{align}
 \frac{d}{dt}\|Lu\|^2_{L^{2}_x} &\leq(2\sigma+1)^2 \|u\|^{2\sigma-1}_{L^{\infty}_x}\|u_x\|_{L^{\infty}_x}\|Lu\|^2_{L^{2}_x}+2\|u\|^{2\sigma}_{L^{\infty}_x}\|u\|_{L^{2}_x}\|Lu\|_{L^{2}_x} \notag \\
&\leq(2\sigma+1)^2 (D\epsilon)^{2\sigma-\frac{1}{2}} \langle t \rangle^{-\sigma}\|Lu\|^2_{L^{2}_x}+2\big(D\epsilon \langle t \rangle^{-\frac{1}{2}}\big)^{2\sigma}\epsilon \|Lu\|_{L^{2}_x} \label{gDNLS-Lu1}
	\end{align}
provided that $\sigma\geq \frac{1}{2}$.

Set $y(t)=\|Lu(t)\|_{L^{2}_x}$. From \eqref{gDNLS-Lu1}, we have
\begin{align}
y'+p(t)y\leq B(t) \label{gDNLS-Lu2}
	\end{align}
with $p(t)=-(D\epsilon)^{2\sigma-\frac{1}{2}} \langle t \rangle^{-\sigma}$, $ B(t)=2\big(D\epsilon \langle t \rangle^{-\frac{1}{2}}\big)^{2\sigma}\epsilon $ and $y(0)=\|xu_0\|_{L^{2}_x}\leq \epsilon$. It is easy to see from \eqref{gDNLS-Lu2} that
$$y(t)\leq e^{-\int_0^tp(s)ds}y(0)+e^{-\int_0^tp(s)ds}\int_0^te^{\int_0^sp(\tau)d\tau}B(s)ds.$$
Hence,
\begin{equation}
	\|Lu(t)\|_{L^{2}_x}\lesssim\left\{
	\begin{aligned}
		&\epsilon  \langle t \rangle^{5(D\epsilon)^{\frac{3}{2}}},  \hspace{5.78mm} \sigma=1,\\
		&\epsilon,  \hspace{20mm} \sigma>1. \label{gDNLS-Lu3} \\
	\end{aligned}
	\right.
\end{equation}
By similar argument, we also have
\begin{equation}
	\|Lu_x(t)\|_{L^{2}_x}\lesssim\left\{
	\begin{aligned}
		&\epsilon   \langle t \rangle^{5(D\epsilon)^{\frac{3}{2}}},  \hspace{5.78mm} \sigma=1,\\
		&\epsilon,  \hspace{20mm} \sigma>1. \label{gDNLS-Lu4} \\
	\end{aligned}
	\right.
\end{equation}
In short, \eqref{gDNLS-Lu3} and \eqref{gDNLS-Lu4} tell us that $\|Lu(t)\|_{H^{1}_x}\lesssim \epsilon  \langle t \rangle^{5(D\epsilon)^{\frac{3}{2}}}$ if $\sigma=1$, and $\|Lu(t)\|_{H^{1}_x}\lesssim \epsilon $ if $\sigma>1$.

\subsection{Proof of Theorem \ref{MainResult2}}

We use the bootstrap argument to complete the proof of Theorem \ref{MainResult2} in this subsection. At first, we make two bootstrap assumptions on $u$ and its derivative $u_x$, see \eqref{BootstrapAssump}:
\begin{align}
\|u\|_{L^{\infty}_x}\leq D\epsilon  \langle t \rangle^{-\frac{1}{2}}, \hspace{10mm} \|u_x\|_{L^{\infty}_x}\leq \sqrt{D\epsilon} \langle t \rangle^{-\frac{1}{2}},\nonumber
	\end{align}
where $D$ is a sufficiently large universal constant to be chosen later. Then our task will be to improve the bootstrap bound  on an arbitrarily large time interval $[0, T]$ where the solution $u$ exists.  Once this is achieved, a standard
continuity argument shows that the solution $u$ is global in time and satisfies these bounds.  The asymptotic behavior of the solution can be easily got from the approximate ODE and error estimates, thereby concluding the proof of the theorem.

Let us first consider the case $\sigma=1$.

\vspace{1mm}
\noindent
$\bullet$ {\bf Dispersive decay}  
\vspace{1mm}

 We know from Lemma  \ref{DifferenceBounds} that  $\gamma$ is a good approximation of $u$. In particular, by \eqref{DifferenceBounds2a} and \eqref{gDNLS-Lu3} we have
\begin{align}
\|u(t)\|_{L^{\infty}}\leq & \|u(t)-t^{-\frac{1}{2}}e^{i\phi(t,vt)}\gamma(t, v)\|_{L^{\infty}}+t^{-\frac{1}{2}}\|\gamma(t, v)\|_{L^{\infty}} \notag \\
\lesssim  & t^{-\frac{3}{4}}\|Lu\|_{L^{2}_x}+t^{-\frac{1}{2}}\|\gamma(t, v)\|_{L^{\infty}}\notag \\
\lesssim  &\epsilon \langle t \rangle^{5(D\epsilon)^{\frac{3}{2}}-\frac{3}{4}}
+t^{-\frac{1}{2}}\|\gamma(t, v)\|_{L^{\infty}}. \label{FinalPf1}
	\end{align}

In order to estimate $\|\gamma(t, v)\|_{L^{\infty}}$, we use the asymptotic equation  \eqref{AsympEq1}. By using the energy method, one gets
	\begin{align}
\int_1^t\frac{d}{ds}|\gamma(s,v)|^2ds=2\text{Re}\int_1^tiR(s,v)\bar{\gamma}(s,v)ds  \hspace{5mm}\text{for} \hspace{3mm} t>1,\nonumber
	\end{align}
which implies that
	\begin{align}
|\gamma(t,v)| \lesssim |\gamma(1,v)|+ \int_1^t|R(s,v)|ds.\label{FinalPf2}
	\end{align}

On the one hand, it follows easily from \eqref{DifferenceBounds1a} and \eqref{BootstrapAssump} that
	\begin{align}
\|\gamma(1)\|_{L^{\infty}}\lesssim    \|Lu(1)\|_{L^{2}_x}+\|u(1)\|_{L^{2}_x}\lesssim  \epsilon.\label{FinalPf2a}
	\end{align}
On the other hand, according to \eqref{AsympEq2}, \eqref{BootstrapAssump}, \eqref{gDNLS-Lu3} and \eqref{gDNLS-Lu4} we get
	\begin{align}
\|R\|_{L^{\infty}}\lesssim  \epsilon \langle t \rangle^{-\frac{5}{4}+5(D \epsilon)^{\frac{3}{2}}}+ D^3\epsilon^3 \langle t \rangle^{-\frac{3}{2}}+  D^{\frac{3}{2}}\epsilon^{\frac{5}{2}} \langle t \rangle^{-\frac{5}{4}+5(D \epsilon)^{\frac{3}{2}}}+ D\epsilon^3 \langle t \rangle^{-\frac{7}{4}+10(D \epsilon)^{\frac{3}{2}}}.\label{FinalPf2b}
	\end{align}
Hence,
	\begin{align}
\int_1^t|R(s,v)|ds \lesssim   \epsilon+D^3\epsilon^3+ D^{\frac{3}{2}}\epsilon^{\frac{5}{2}} +D\epsilon^3,\label{FinalPf2c}
	\end{align}
as long as $5(D \epsilon)^{\frac{3}{2}}<\frac{1}{4}$ so that each term  in \eqref{FinalPf2b} is integrable with respect to $t$. Choosing $D=\frac{1}{10\epsilon}$, then from \eqref{FinalPf2}-\eqref{FinalPf2c} we see that
	\begin{align}
 |\gamma(t,v)| \lesssim \epsilon+D^3\epsilon^3+ D^{\frac{3}{2}}\epsilon^{\frac{5}{2}} +D\epsilon^3.\label{FinalPf2f}
	\end{align}
Combining \eqref{FinalPf2f} with \eqref{FinalPf1}, we obtain
\begin{align}
\|u(t)\|_{L^{\infty}}
\lesssim  \big(1+ D/100\big) \epsilon  \langle t \rangle^{-\frac{1}{2}}<D\epsilon  \langle t \rangle^{-\frac{1}{2}}\nonumber
	\end{align}
by taking $\epsilon$  small enough. This closes the first part of the bootstrap argument.

Arguing similarly, we get by using \eqref{DifferenceBounds2c} that
\begin{align}
\|u_x(t)\|_{L^{\infty}}\leq & \|u_x(t)-\frac{i}{2}t^{-\frac{1}{2}}e^{i\phi(t,vt)}v\gamma(t, v)\|_{L^{\infty}}+t^{-\frac{1}{2}}\|v\gamma(t, v)\|_{L^{\infty}} \notag \\
\lesssim  & t^{-\frac{3}{4}}\left(\|Lu\|_{L^{2}_x}+\|Lu_x\|_{L^{2}_x}\right)+t^{-\frac{1}{2}}\|v\gamma(t, v)\|_{L^{\infty}}\notag \\
\lesssim  &\epsilon  \langle t \rangle^{5(D\epsilon)^{\frac{3}{2}}-\frac{3}{4}}+t^{-\frac{1}{2}}\|v\gamma(t, v)\|_{L^{\infty}}. \label{FinalPfXX1}
	\end{align}
From \eqref{FinalPf2}, one has
	\begin{align}
 |v\gamma(t,v)| \lesssim |v\gamma(1,v)|+ \int_1^t|vR(s,v)|ds.\label{FinalPfXX2}
	\end{align}
And one easily sees from  \eqref{AsympEq3} that
\begin{align}
\|vR\|_{L^{\infty}} \lesssim& D \epsilon  \langle t \rangle^{-\frac{3}{2}}+
  \epsilon \langle t \rangle^{-\frac{5}{4}+5(D\epsilon)^{\frac{3}{2}}}+ D^2\epsilon^3 \langle t \rangle^{-\frac{9}{4}+5(D\epsilon)^{\frac{3}{2}}}+(D\epsilon)^{\frac{5}{2}} \langle t \rangle^{-\frac{3}{2}} \notag \\
&+  D\epsilon^3 \langle t \rangle^{-\frac{7}{4}+10(D\epsilon)^{\frac{3}{2}}}+ D^{\frac{3}{2}}\epsilon^{\frac{5}{2}} \langle t \rangle^{-\frac{5}{4}+5(D\epsilon)^{\frac{3}{2}}}.\label{FinalPfXX2a}
	\end{align}
 Collecting \eqref{FinalPfXX1}-\eqref{FinalPfXX2a} yields 
\begin{align}
\|u_x(t)\|_{L^{\infty}}
\lesssim  \big(1+ D/27\big) \epsilon  \langle t \rangle^{-\frac{1}{2}}< \sqrt{D\epsilon}  \langle t \rangle^{-\frac{1}{2}}.\nonumber
	\end{align}
Therefore, we close both bootstrap assumptions.

\vspace{1mm}
\noindent
$\bullet$ {\bf Asymptotic behavior}  
\vspace{1mm}

For each $v$,  there exists a function $W(v)$ such that the solution of 
\begin{align}
i\widetilde{\gamma}_t=\frac{vt^{-1}}{2}|\widetilde{\gamma}|^2\widetilde{\gamma} \label{unperturbedODE}
	\end{align}
can be expressed as
\begin{align}
\widetilde{\gamma}(t,v)=W(v)e^{-i\frac{v}{2}|W(v)|^2\log t}.\nonumber
	\end{align}
Combining   \eqref{AsympEq2} and  \eqref{AsympEq2-L2} with  \eqref{MainResult2b}, \eqref{gDNLS-Lu3}  and \eqref{gDNLS-Lu4}, we see that
\begin{align}
\|R(t,v)\|_{L^{\infty}}
\lesssim   \epsilon  \langle t \rangle^{-\frac{5}{4}+5(D\epsilon )^{\frac{3}{2}}}, \hspace{9mm} \|R(t,v)\|_{L^{2}_v}
\lesssim   \epsilon  \langle t \rangle^{-\frac{3}{2}+5(D\epsilon )^{\frac{3}{2}}}.\label{remainder-Fi}
	\end{align}
For each $v$, since $R(t,v)$ in uniformly integrable in time, $\gamma(t,v)$ is well approximated at infinity by a solution to the unperturbed ODE \eqref{unperturbedODE}, in the sense that
\begin{align}
\gamma(t,v)=W(v)e^{-i\frac{v}{2}|W(v)|^2\log t}+O_{L^{\infty}_v}\left(\epsilon  \langle t \rangle^{-\frac{1}{4}+5(D\epsilon )^{\frac{3}{2}}}\right).\label{AsymBehav1}
	\end{align}
Integrating the $L_v^2$ part of \eqref{remainder-Fi} leads to a similar $L_v^2$ bound
\begin{align}
\gamma(t,v)=W(v)e^{-i\frac{v}{2}|W(v)|^2\log t}+O_{L^{2}_v}\left(\epsilon  \langle t \rangle^{-\frac{1}{2}+5(D\epsilon )^{\frac{3}{2}}}\right).\label{AsymBehav2}
	\end{align}
So, it is easy to see that \eqref{MainResult2c} follows directly from   \eqref{DifferenceBounds2a}, \eqref{DifferenceBounds2b}, \eqref{AsymBehav1} and \eqref{AsymBehav2}. Similarly,  \eqref{MainResult2d} follows  from   \eqref{DifferenceBounds3a}, \eqref{DifferenceBounds3b}, \eqref{AsymBehav1} and \eqref{AsymBehav2}.  

It remains to establish the regularity of $W$. From \eqref{AsymBehav2}, \eqref{DifferenceBounds1aa} and the $L^2$-conservation law, we have 
$$\|W\|_{L_v^2}\lesssim \|\gamma(t,v)\|_{L_v^2}\lesssim \|u\|_{L_x^2}\lesssim \epsilon.$$

On the other hand, from  \eqref{AsympEq3}, \eqref{MainResult2b}, \eqref{gDNLS-Lu3}  and \eqref{gDNLS-Lu4},  we get
\begin{align}
v\gamma(t,v)=vW(v)e^{-i\frac{v}{2}|W(v)|^2\log t}+O_{L^{\infty}_v}\left(\epsilon  \langle t \rangle^{-\frac{1}{4}+5(D\epsilon )^{\frac{3}{2}}}\right) \nonumber
	\end{align}
which together with \eqref{DifferenceBounds1b2} implies that
\begin{align}
\|vW(v)\|_{L_v^{\infty}}\lesssim \|v\gamma(t,v)\|_{L_v^{\infty}}+\epsilon  \langle t \rangle^{-\frac{1}{4}+5(D\epsilon )^{\frac{3}{2}}}\lesssim \epsilon.\label{AsymBehav1v}
	\end{align}
Then, from \eqref{AsymBehav1}-\eqref{AsymBehav1v} and \eqref{DifferenceBounds1aa}, one has
\begin{align}
&\left\|W(v)-\gamma(t,v)e^{i\frac{v}{2}|\gamma(t,v)|^2\log t}\right\|_{L_v^2}\notag \\
\lesssim& \left\|\gamma(t,v)e^{-i\frac{v}{2}|W(v)|^2\log t}-\gamma(t,v)e^{i\frac{v}{2}|\gamma(t,v)|^2\log t}\right\|_{L_v^2} +\epsilon  \langle t \rangle^{-\frac{1}{2}+5(D\epsilon )^{\frac{3}{2}}}\notag \\
\lesssim& \|\gamma(t,v)\|_{L_v^2} \left\|v|\gamma(t,v)|^2-v|W(v)|^2\right\|_{L_v^{\infty}}\log t+\epsilon  \langle t \rangle^{-\frac{1}{2}+5(D\epsilon )^{\frac{3}{2}}} \notag \\
\lesssim& \epsilon  \langle t \rangle^{-\frac{1}{4}+5(D\epsilon )^{\frac{3}{2}}}\log t.
\label{AsymBehav-W1}
	\end{align}
While by \eqref{DifferenceBounds1a}, \eqref{DifferenceBounds1aa} and \eqref{DifferenceBounds1b2} we get
\begin{align}
&\left\|\partial_v \big(\gamma(t,v)e^{i\frac{v}{2}|\gamma(t,v)|^2\log t} \big)\right\|_{L_v^2}\notag \\
\lesssim& \|\partial_v \gamma(t,v)\|_{L_v^2}+ \left\|\gamma(t,v) |\gamma(t,v)|^2\right\|_{L_v^2}\log t + \left\|v|\gamma(t,v)|^2\partial_v\gamma(t,v) \right\|_{L_v^2}\log t  \notag \\
\lesssim& \epsilon  \langle t \rangle^{5(D\epsilon )^{\frac{3}{2}}}\log t.
\label{AsymBehav-W2}
	\end{align}
Hence, according to \eqref{AsymBehav-W1} and \eqref{AsymBehav-W2}, we have
$$W(v)=O_{H_v^1}(\epsilon  \langle t \rangle^{5(D\epsilon )^{\frac{3}{2}}}\log t)+O_{L_v^2}(\epsilon  \langle t \rangle^{-\frac{1}{4}+5(D\epsilon )^{\frac{3}{2}}}\log t).$$
By interpolation we see that for large enough $C$  
$$\|W(v)\|_{H_v^{1-C\epsilon ^{\frac{3}{2}}}}\lesssim \epsilon. $$

If $\sigma>1$, by \eqref{AsympEq2}, \eqref{AsympEq3} and the fact $\|Lu(t)\|_{H^{1}_x}\lesssim \epsilon $,  we  know that the decay of $\|R\|_{L^{\infty}}$ and  $\|vR\|_{L^{\infty}}$ are  $\langle t \rangle^{-\frac{5}{4}}$. Hence, all the terms are integrable.  The bootstrap argument is obviously feasible. 

Moreover,  there exists a function $\widetilde{W}(v)\in H^{1}_v$ such that
$$\widetilde{\gamma}(t,v)=\widetilde{W}(v)e^{-i\frac{v}{2}|\widetilde{W}(v)|^{2\sigma}t^{1-\sigma}}$$
is the solution of 
$$i\widetilde{\gamma}_t=\frac{vt^{-\sigma}}{2}|\widetilde{\gamma}|^{2\sigma}\widetilde{\gamma}.$$
In this case,
\begin{align}
\gamma(t,v)=\widetilde{W}(v)e^{-i\frac{v}{2}|\widetilde{W}(v)|^{2\sigma}t^{1-\sigma}}+err_{\sigma}\label{AsymBehav3}
	\end{align}
with
$$err_{\sigma}\in O_{L^{\infty}_v}\left(\epsilon  \langle t \rangle^{-\frac{1}{4}}\right)\cap
O_{L^{2}_v}\left(\epsilon  \langle t \rangle^{-\frac{1}{2}}\right).$$
Then the asymptotic expansions in \eqref{MainResult2e} and \eqref{MainResult2f} can be derived from  \eqref{DifferenceBounds2a}-\eqref{DifferenceBounds3a}  and \eqref{AsymBehav3}.

  This completes the proof of the theorem.



\begin{thebibliography}{100}

\bibitem{AS15}   D.M. Ambrose and G. Simpson.
		\newblock    Local existence theory for derivative nonlinear Schr\"odinger equations with noninteger power nonlinearities.
	\newblock \emph{SIAM J. Math. Anal.} \textbf{47}(3)  (2015)  2241--2264


\bibitem{BP22}   H. Bahouri and G. Perelman.
		\newblock     Global well-posedness for the derivative nonlinear Schr\"odinger equation.
	\newblock \emph{Inventiones mathematicae} \textbf{229}(2)  (2022)  639--688


\bibitem{BS23}   R. Bai and J. Shen.
		\newblock    Wave operator for the generalized derivative nonlinear  Schr\"odinger equation.
	\newblock \emph{arXiv:} 2312.14505 

\bibitem{BWX20}   R. Bai, Y. Wu and J. Xue.
		\newblock    Optimal small data scattering for the generalized derivative nonlinear Schr\"odinger equations.
	\newblock \emph{J. Differential Equations} \textbf{269}  (2020)  6422--6447


\bibitem{Bej06}   I. Bejenaru.
		\newblock    Quadratic nonlinear derivative Schr\"odinger equations, I.
	\newblock \emph{Int. Math. Res. Pap.}   (2006)  70630

\bibitem{Bej08}   I. Bejenaru.
		\newblock    Quadratic nonlinear derivative Schr\"odinger equations, II.
	\newblock \emph{ Trans. Am. Math. Soc.} \textbf{360}(11)   (2008)  5925--5957

\bibitem{BT08}   I. Bejenaru and D. Tataru.
		\newblock    Large data local solutions for the derivative NLS equation.
	\newblock \emph{ J. Eur. Math. Soc.} \textbf{10}   (2008)  957--985



\bibitem{BL01}   H. Biagioni and F. Linares.
		\newblock    Ill-posedness for the derivative Schr\"odinger and generalized Benjamin-Ono equations.
	\newblock \emph{Trans. Am. Math. Soc. } \textbf{353}(9)  (2001)  3649--3659


\bibitem{Bya24}   A.  Byars.
		\newblock     Global dynamics of small data solutions to the derivative Nonlinear Schr\"odinger equation.
	\newblock \emph{arXiv:} 2406.17152v1


\bibitem{CW92}   T. Cazenave and F. Weissler.
		\newblock    Rapidly decaying solutions of the nonlinear Schr\"odinger equation.
	\newblock \emph{ Commun. Math. Phys.} \textbf{147}  (1992)  75--100


\bibitem{CKSTT01}   J. Colliander, M. Keel, G. Staffilani, H. Takaoka and  T. Tao.
		\newblock    Global well-posedness for Schr\"odinger equations with derivative.
	\newblock \emph{SIAM J. Math. Anal. } \textbf{33}(3)  (2001)  649--669

\bibitem{CKSTT02}   J. Colliander, M. Keel, G. Staffilani, H. Takaoka and  T. Tao.
		\newblock    A refined global well-posedness for Schr\"odinger equations with derivative.
	\newblock \emph{SIAM J. Math. Anal. } \textbf{34}(1)  (2002)  64--86

\bibitem{FKVZ24}  C. Fan, R. Killip,  M. Visan and Z. Zhao.
		\newblock    Dispersive decay for the mass-critical nonlinear Schr\"odinger equation.
	\newblock \emph{arXiv:} 2403.09989v1


\bibitem{FSZ24}  C. Fan, G. Staffilani and Z. Zhao.
		\newblock    On decaying properties of nonlinear  Schr\"odinger equations.
	\newblock \emph{ SIAM Journal on Mathematical Analysis} \textbf{56}(3)   (2024) 3082--3109

\bibitem{FZ21}  C. Fan  and  Z. Zhao.
		\newblock    Decay estimates for nonlinear  Schr\"odinger equation.
	\newblock \emph{Discrete Contin. Dyn. Syst.} \textbf{41}(8)   (2021) 3973--3984

\bibitem{FZ23}  C. Fan  and  Z. Zhao.
		\newblock     A note on decay property of nonlinear  Schr\"odinger equations.
	\newblock \emph{Proc. Amer. Math. Soc.} \textbf{151}(6)   (2023) 2527--2542


\bibitem{FGO20}   K. Fujiwara, V. Georgiev and T. Ozawa.
		\newblock   Self-similar solutions to the derivative nonlinear Schr\"odinger equation.
	\newblock \emph{ J. Differential Equations} \textbf{268}(12)  (2002)  7940--7961


\bibitem{FHI17}   N. Fukaya, M. Hayashi and T. Inui.
		\newblock   A sufficient condition for global existence of solutions to a generalized derivative nonlinear Schr\"odinger equation.
	\newblock \emph{Anal. PDE} \textbf{10}(5)  (2017)  1149--1167


\bibitem{GOV94}   J. Ginibre, T. Ozawa and G. Velo.
		\newblock   On the existence of the wave operators for a class of nonlinear Schr\"odinger equations.
	\newblock \emph{Ann. Inst. Henri Poincar\'e Phys. Th\'eor.} \textbf{60}(2)  (1994)  211--239

 

\bibitem{GriMach13} M. Grillakis and M. Machedon.
		\newblock    Pair excitations and the mean field approximation of interacting bosons, I.
	\newblock \emph{Communications in Mathematical Physics} \textbf{324}(2)   (2013)  601--636

\bibitem{Grunrock00}   A.  Gr\"unrock.
		\newblock     On the Cauchy- and periodic boundary value problem for a certain class of derivative nonlinear Schr\"odinger equations.
	\newblock \emph{arXiv:} 0006195 


\bibitem{Grunrock05}   A.  Gr\"unrock.
		\newblock     Bi-and trilinear Schr\"odinger estimates in one space dimension with applications to cubic NLS and DNLS.
	\newblock \emph{Int. Math. Res. Not.} \textbf{41} (2005)  2525--2558


\bibitem{GT91}   B. Guo  and S. Tan.
		\newblock     On smooth solutions to the initial value problem for the mixed nonlinear Schr\"odinger equations.
	\newblock \emph{Proc. R. Soc. Edinb., Sect. A} \textbf{119}(1-2)  (1991)  31--45


\bibitem{GHLN13}   Z. Guo, N. Hayashi, Y. Lin and P. I. Naumkin.
		\newblock     Modified scattering operator for the derivative nonlinear Schr\"odinger equation.
	\newblock \emph{SIAM J. Math. Anal.} \textbf{45}(6)  (2013)  3854--3871

\bibitem{GHS23}  Z. Guo, C. Huang and L. Song.
		\newblock    Pointwise decay of solutions to the energy critical nonlinear  Schr\"odinger equations.
	\newblock \emph{J. Differential Equations} \textbf{366}  (2023) 71--84

\bibitem{GW17}   Z. Guo and Y. Wu.
		\newblock     Global well-posedness for the derivative nonlinear Schr\"odinger equation in $H^{\frac{1}{2}}(\mathbb{R})$.
	\newblock \emph{Discrete Contin. Dyn. Syst.} \textbf{37}(1)  (2017)  257--264


\bibitem{Hao07}  C. Harrop-Griffiths, R. Killip, M. Ntekoume  and M. Visan.
		\newblock    Well-posedness for one-dimensional derivative nonlinear  Schr\"odinger equations.
	\newblock \emph{ Commun. Pure Appl. Anal. } \textbf{6}(4)  (2007)  997--1021


\bibitem{HGKNV22}   B. Harrop-Griffiths, R. Killip, M. Ntekoume  and M. Visan.
		\newblock     Global well-posedness for the derivative nonlinear Schr\"odinger equation in  $L^{2}(\mathbb{R})$.
	\newblock \emph{arXiv:} 2204.12548 


\bibitem{HO16}   M. Hayashi and  T. Ozawa.
		\newblock     Well-posedness for a generalized derivative nonlinear Schr\"odinger equation.
	\newblock \emph{J. Differential Equations} \textbf{261}(10)  (2016)  5424--5445

\bibitem{Hayashi93}   N. Hayashi.
		\newblock     The initial value problem for the derivative nonlinear Schr\"odinger equation in the
energy space.
	\newblock \emph{Nonlinear Anal.} \textbf{20}  (1993)  823--833

\bibitem{HN97}   N. Hayashi and P. I. Naumkin.
		\newblock     Asymptotic behavior in time of solutions to the derivative nonlinear Schr\"odinger equation revisited.
	\newblock \emph{Discrete and Continuous Dynamical Systems} \textbf{3}(3)  (1997)  383--400

\bibitem{HO92}   N. Hayashi and T. Ozawa.
		\newblock    On the derivative nonlinear  Schr\"odinger equation.
	\newblock \emph{Physica D} \textbf{55}(1-2)  (1992)  14--36


\bibitem{HO94}   N. Hayashi and  T. Ozawa.
		\newblock     Finite energy solutions of nonlinear Schr\"odinger equations of derivative type.
	\newblock \emph{SIAM J. Math. Anal.} \textbf{25}(6)  (1994)  1488--1503

\bibitem{HayTsu86}   N. Hayashi and M. Tsutsumi.
		\newblock   $L_t^{\infty}C(\mathbb{R}^n)$-decay of classical solutions for nonlinear Schr\"odinger equations.
	\newblock \emph{Proceedings of the Royal Society of Edinburgh Section A: Mathematics} \textbf{104}(3-4)  (1986)  309--327

\bibitem{Hirayama15}    H. Hirayama.
		\newblock     Well-posedness and scattering for nonlinear Schr\"odinger equations with a derivative nonlinearity at the scaling critical regularity.
	\newblock \emph{Funkc. Ekvacioj } \textbf{58}(3)  (2015)  431--450

\bibitem{IfKoTa19}     M. Ifrim, H. Koch, and D. Tataru.
		\newblock     Dispersive decay of small data solutions for the KdV equation.
	\newblock \emph{arXiv:} 1901.05934 


\bibitem{IfSa23}     M. Ifrim and J-C. Saut.
		\newblock     The lifespan of small data solutions for Intermediate Long Wave equation (ILW).
	\newblock \emph{arXiv:} 2305.05102 

 
\bibitem{IfTa15}      M. Ifrim and D. Tataru.
		\newblock     Global bounds for the cubic nonlinear Schr\"odinger equation (NLS) in one space dimension.
	\newblock \emph{Nonlinearity} \textbf{28}(8)  (2015)  2661

\bibitem{IfTa19}      M. Ifrim and D. Tataru.
		\newblock     Well-posedness and Dispersive Decay of Small Data for Solutions for the Benjamin-Ono Equation.
	\newblock \emph{Annales scientifiques de l' ENS} \textbf{4}  (2019)  297--335

\bibitem{IfTa24}      M. Ifrim and D. Tataru.
		\newblock    Testing by wave packets and modified scattering in nonlinear dispersive pde’s.
	\newblock \emph{Transactions of the American Mathematical Society, Series B} \textbf{11} (2024)  


\bibitem{JLPS18}   R. Jenkins, J. Liu, P. Perry and  C. Sulem.
		\newblock     Global well-posedness for the derivative nonlinear Schr\"odinger equation.
	\newblock \emph{Commun. Partial Differ. Equ.} \textbf{43}(8)  (2018)  1151--1195

\bibitem{JLPS20}   R. Jenkins, J. Liu, P. Perry and  C. Sulem.
		\newblock     The derivative nonlinear  Schr\"odinger equation: global well-posedness and soliton resolution.
	\newblock \emph{Q. Appl. Math.} \textbf{78}(1)  (2020)  33--73


\bibitem{KPV93}  C. E. Kenig, G. Ponce  and L. Vega.
		\newblock    Well-posedness and scattering results for the generalized Korteweg-de Vries equation via the contraction principle.
	\newblock \emph{Commun. Pure Appl. Math.} \textbf{46}(4)  (1993)  527--620

\bibitem{KNV23}  R. Killip, M. Ntekoume  and M. Visan.
		\newblock    On the well-posedness problem for the derivative nonlinear Schr\"odinger equation.
	\newblock \emph{Analysis \&  PDE} \textbf{16}(5)  (2023)  1245--1270

\bibitem{KVZ18}  R. Killip, M. Visan and X. Zhang.
		\newblock    Low regularity conservation laws for integrable PDE.
	\newblock \emph{Geom. Funct. Anal. } \textbf{28}(4)  (2018)  1062--1090

\bibitem{Klai80}  S. Klainerman.
		\newblock    Global existence for nonlinear wave equations.
	\newblock \emph{Comm. Pure Appl. Math. } \textbf{33}(1)  (1980)  43--101



\bibitem{Klai85}  S. Klainerman.
		\newblock    Uniform decay estimates and the Lorentz invariance of the classical wave equation.
	\newblock \emph{Comm. Pure Appl. Math. } \textbf{38}(3)  (1985)  321--332

\bibitem{KlaiPon83}   S. Klainerman and G. Ponce.
		\newblock    Global small amplitude solutions to nonlinear evolution equations.
	\newblock \emph{Comm. Pure Appl. Math.} \textbf{36}  (1983)  133--141



\bibitem{KS22} F. Klaus and R. Schippa.
A priori estimates for the derivative nonlinear Schr\"odinger equation. 
\newblock   \emph{Funkcial. Ekvac.} {\bf{65}}(3)  (2022)  329--346




\bibitem{Kowal24}   M. Kowalski.
		\newblock    Dispersive decay for the energy-critical nonlinear Schr\"odinger equation.
	\newblock \emph{Journal of Differential Equations} \textbf{429}  (2025)  392--426


\bibitem{Kowal25}   M. Kowalski.
		\newblock    Dispersive decay for the energy-critical nonlinear wave equation.
	\newblock \emph{arXiv:}  2501.06387


\bibitem{Shan24}    M. Kowalski and M. Shan.
		\newblock    Dispersive decay for the mass-critical generalized Korteweg-de Vries equation and generalized Zakharov-Kuznetsov equations.
	\newblock \emph{arXiv:}  2409.05550



\bibitem{LinStr78}   J. E. Lin and W. A. Strauss.
		\newblock     Decay and scattering of solutions of a nonlinear schr\"odinger equation.
	\newblock \emph{Journal of Functional Analysis} \textbf{30}(2)  (1978)  245--263


\bibitem{LPS19a} F. Linares, G. Ponce and  G.N. Santos.
On a class of solutions to the generalized derivative  Schr\"odinger equations. 
\newblock   \emph{Acta Math. Sin. (Engl. Ser.)} {\bf{35}}(6) (2019)  1057--1073



\bibitem{LPS19b} F. Linares, G. Ponce and  G.N. Santos.
 On a class of solutions to the generalized derivative Schr\"odinger equations II. 
\newblock   \emph{J. Differential Equations } {\bf{267}}(1) (2019)  97--118

\bibitem{Loo21}   S.-Z. Looi.
		\newblock     Pointwise decay for the wave equation on nonstationary spacetimes.
	\newblock \emph{arXiv:} 2105.02865 


\bibitem{Loo22}   S.-Z. Looi.
		\newblock     Pointwise decay for the energy-critical nonlinear wave equation.
	\newblock \emph{arXiv:} 2205.13197


\bibitem{MWX11}  C. Miao, Y. Wu and G. Xu.
		\newblock    Global well-posedness for Schr\"odinger equation with derivative in $H^{\frac{1}{2}}(\mathbb{R})$.
	\newblock \emph{J. Differ. Equ.} \textbf{251}(8)  (2011)  2164--2195



\bibitem{MOMT76}   W. Mio, T. Ogino, K. Minami and S. Takeda.
		\newblock    Modified nonlinear Schr\"odinger for Alfv\'en waves propagating along the magnetic field in cold plasma.
	\newblock \emph{J. Phys. Soc. Japan} \textbf{41}  (1976)  265--271

\bibitem{Mjol76}   E. Mjolhus.
		\newblock   On the modulational instability of hydromagnetic waves parallel to the magnetic field.
	\newblock \emph{ J. Plasma Phys. } \textbf{16}  (1976)  321--334

\bibitem{MorStr72}    C. S. Morawetz and W. A. Strauss. 
		\newblock    Decay and scattering of solutions of a nonlinear relativistic wave equation.
	\newblock \emph{Comm. Pure Appl. Math.} \textbf{25} (1972)  1--31


\bibitem{MY20}   R. Mosincat  and H. Yoon.
		\newblock   Unconditional uniqueness for the derivative nonlinear Schr\"dinger equation on the real line.
	\newblock \emph{Discrete Contin. Dyn. Syst.} \textbf{40}(1) (2020) 47--80

\bibitem{Nakanishi01}   K. Nakanishi.
		\newblock   Asymptotically-free solutions for the short-range nonlinear  Schr\"dinger equation.
	\newblock \emph{ SIAM J. Math. Anal.} \textbf{32} (2001) 1265--1271


\bibitem{Ozawa96}  T. Ozawa.
		\newblock    On the nonlinear Schr\"odinger equations of derivative type.
	\newblock \emph{Indiana Univ. Math. J.} \textbf{45}(42)  (1996)  137--163

\bibitem{OT98}  T. Ozawa and Y. Tsutsumi.
		\newblock    Space-time estimates for null gauge forms and nonlinear Schr\"odinger equations.
	\newblock \emph{Differential Integral Equations} \textbf{11}   (1998)  201--222


\bibitem{Pornn18}  D. Pornnopparath.
		\newblock    Small data well-posedness for derivative nonlinear Schr\"odinger equations.
	\newblock \emph{J. Differ. Equ.} \textbf{265}(8)   (2018)  3792--3940


\bibitem{Santos15}  G.d.N. Santos.
		\newblock    Existence and uniqueness of solution for a generalized nonlinear derivative Schr\"odinger equation.
	\newblock \emph{J. Differential  Equations} \textbf{259}(5)   (2015)  2030--2060

\bibitem{Seg83}    I. E. Segal.
		\newblock   Nonlinear wave equations. In ``Nonlinear partial differential operators and quantization procedures (Clausthal, 1981)", 115--141, Lecture Notes in Math., 1037, Springer, Berlin, 1983


\bibitem{Shatah82}   J. Shatah.
		\newblock    Global existence of small solutions to nonlinear evolution equations.
	\newblock \emph{J. Differential Equations} \textbf{46}  (1982)  409--425


\bibitem{Strauss81}  W.A. Strauss.
		\newblock    Nonlinear scattering theory at low energy.
	\newblock \emph{ J. Funct. Anal. } \textbf{41}(1)   (1981)  110--133



\bibitem{Takaoka99}   H. Takaoka.
		\newblock    Well-posedness for the one dimensional Schr\"odinger equation with the derivative nonlinearity.
	\newblock \emph{Adv. Differential Equations} \textbf{4}  (1999)  561--680


\bibitem{Takaoka01}  H. Takaoka.
		\newblock    Global well-posedness for Schr\"odinger equations with derivative in a nonlinear term and data in low-order Sobolev spaces.
	\newblock \emph{Electionic Journal of Differential Equations} \textbf{2001}(42)  (2001)  1--23


\bibitem{TX20}  X. Tang and G. Xu.
		\newblock    Microscopic conservation laws for the derivative nonlinear Schr\"odinger equation.
	\newblock \emph{  Lett. Math. Phys.} \textbf{111}(138)  (2021),  https://doi.org/10.1007/s11005-021-01478-y 
 

\bibitem{TF80}  M. Tsutsumi and I. Fukuda.
		\newblock    On solutions of the derivative nonlinear Schr\"odinger equation: Existence and uniqueness theorem.
	\newblock \emph{Funkcial. Ekvac.} \textbf{23}   (1980)  259--277

\bibitem{TF81}  M. Tsutsumi and I. Fukuda.
		\newblock     On solutions of the derivative nonlinear Schr\"odinger equation II.
	\newblock \emph{Funkcial. Ekvac.} \textbf{24}  (1981)  85--94

\bibitem{Wang11}  Y. Wang.
		\newblock     Global well-posedness and scattering for derivative Schr\"odinger equation.
	\newblock \emph{Commun. Partial Differ. Equ.} \textbf{36}(10)  (2011)  1694--1722


\bibitem{Wu13}  Y.  Wu.
		\newblock     Global well-posedness for the nonlinear Schr\"odinger equation with derivative in energy space.
	\newblock \emph{Anal. PDE} \textbf{6}(8)  (2013)  1989--2002

\bibitem{Wu15}  Y.  Wu.
		\newblock     Global well-posedness on the derivative nonlinear Schr\"odinger equation.
	\newblock \emph{Anal. PDE} \textbf{8}(5)  (2015)  1101--1112


\end{thebibliography}
\end{document}